\newtheorem{thmvoid}{}[section]
\newtheorem{theorem}[thmvoid]{Theorem}
\newtheorem{corollary}[thmvoid]{Corollary}
\newtheorem{lemma}[thmvoid]{Lemma}
\newtheorem{proposition}[thmvoid]{Proposition}
\newtheorem{example}[thmvoid]{Example}
\newtheorem{question}[thmvoid]{Question}
\theoremstyle{definition}
\newtheorem{definition}[thmvoid]{Definition}
\newtheorem{remark}[thmvoid]{Remark}
\numberwithin{equation}{section}
\title[Ramified covers of abelian varieties over torsion fields]{Ramified covers of abelian varieties over torsion fields}
\author{Lior Bary-Soroker}
\address{School of Mathematical Sciences, Tel Aviv University,
Tel Aviv 69978, Israel}
\email{barylior@tauex.tau.ac.il}
\author{Arno Fehm}
\address{Institut f\"{u}r Algebra, Technische Universit\"{a}t Dresden, 01062 Dresden, Germany}
\email{arno.fehm@tu-dresden.de}
\author{Sebastian Petersen}
\address{Institut für Mathematik, Universit\"at Kassel, 34121 Kassel, Germany}
\email{petersen@mathematik.uni-kassel.de}
\thanks{MSC: 12E30, 12E25, 14G05, 11G10, 14K15}
\begin{document}

\newcommand{\Spec}{\mathrm{Spec}}
\newcommand{\Mor}{\mathrm{Mor}}
\newcommand{\Aut}{\mathrm{Aut}}
\newcommand{\Gal}{\mathrm{Gal}}
\newcommand{\Ram}{\mathrm{Ram}}
\newcommand{\codim}{\mathrm{codim}}
\newcommand{\mfm}{\mathfrak{m}}
\newcommand{\OOO}{\mathscr{O}}

\maketitle

\begin{center}
{\em Dedicated to Moshe Jarden on the occasion of his 80th birthday}
\end{center}

\begin{abstract}
We study rational points on ramified covers of abelian varieties over certain infinite Galois extensions of $\mathbb{Q}$.
In particular, we prove that 
every elliptic curve $E$ over $\mathbb{Q}$
has the weak Hilbert property
of Corvaja--Zannier
both over
the maximal abelian extension $\mathbb{Q}^{\rm ab}$ of $\mathbb{Q}$,
and over the field $\mathbb{Q}(A_{\rm tor})$
obtained by adjoining to $\mathbb{Q}$
all torsion points of some abelian variety $A$ over $\mathbb{Q}$.
\end{abstract}

\section{Introduction}

\noindent
Hilbert's irreducibility theorem, which is one of the fundamental results about the arithmetic of number fields, can be viewed as a statement about rational points on finite covers of the projective line $\mathbb{P}^1$,
which led to the definition of the Hilbert property of varieties:

\begin{definition}[\cite{Serre,CTS}]
A variety\footnote{all varieties are assumed integral, cf.~Definition \ref{def:var}} $X$ over a field $K$ of characteristic zero
has the \emph{Hilbert property} HP if for every finite collection of finite 
surjective morphisms $(\pi_i \colon Y_i \to X)_{i=1}^n$ with each $Y_i$ a normal variety over $K$ and ${\rm deg}(\pi_i)\geq 2$, the set $X(K) \smallsetminus \bigcup_{i=1}^n \pi_i(Y_i(K))$ is Zariski-dense in $X$.
\end{definition}

Since Hilbert's irreducibility theorem is of great importance in Galois theory and diophantine geometry, 
the Hilbert property has been investigated
intensively in recent years, see \cite{BFP,Borovoi,CZ,Coccia,Demeio,Demeio2,NakaharaStreeter,Streeter,Javanpeykar,BG,Javanpeykar22}.

A nonzero abelian variety $A$ over a number field $K$ 
does not have HP,
since $2A(K)$ has finite index in $A(K)$ by the weak Mordell--Weil theorem
(and so $A(K)=\bigcup_{i=1}^m\pi_i(A(K))$ where
$\pi_i:A\rightarrow A$, $x\mapsto 2x+P_i$ with
$P_1,\dots,P_m$ a set of representatives of $A(K)/2A(K)$).
After foundational work in \cite{DZ,Zannier} on algebraic groups,
Corvaja and Zannier in \cite{CZ}
explained this failure of HP for abelian varieties more generally by the existence of unramified covers.
They then asked whether abelian varieties satisfy the following weaker property that suffices for many applications:

\begin{definition}[\cite{CZ}]\label{def:WHP}
A smooth proper variety $X$ over a field $K$ of characteristic zero
has the \emph{weak Hilbert property} WHP if for every finite collection of finite  surjective \underline{ramified} morphisms $(\pi_i \colon Y_i \to X)_{i=1}^n$ with each $Y_i$ a  normal variety over $K$, 
the set $X(K) \smallsetminus \bigcup_{i=1}^n \pi_i(Y_i(K))$ is Zariski-dense in $X$.
\end{definition}

This was then answered positively in \cite{CDJLZ}:

\begin{theorem}[Corvaja--Demeio--Javanpeykar--Lombardo--Zannier 2020]\label{thm:CZ}
Every  abelian variety $A$ over a number field $K$
with $A(K)$ Zariski-dense 
has WHP.
\end{theorem}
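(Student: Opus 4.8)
The plan is to reduce, by a standard d\'evissage, to a single ramified Galois cover, to isolate the contribution of its \emph{unramified} subcovers, and then to attack the genuinely ramified part by a local--global argument, with Faltings' theorem serving as the base case of an induction on $\dim A$. First I would replace the given family $(\pi_i\colon Y_i\to A)_{i=1}^n$ by the normalisation $Z$ of $A$ in a Galois closure over $K(A)$ of the compositum of the function fields $K(Y_i)$, and set $G=\mathrm{Gal}(K(Z)/K(A))$; as each $Y_i$ is normal, $Y_i=Z/G_i$ for some subgroup $G_i\le G$. For $a\in A(K)$ outside the branch locus, the fibre $Z_a$ is a $G$-torsor on which $\mathrm{Gal}(\overline K/K)$ acts through a homomorphism with image $\Gamma_a\le G$ (well defined up to conjugacy), and $a\in\pi_i(Y_i(K))$ if and only if $\Gamma_a$ is subconjugate to $G_i$. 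Let $H\trianglelefteq G$ be the subgroup generated by the inertia groups at the codimension-one points of $A$ lying in the branch locus. Since every $\pi_i$ is ramified, so is $Z\to A$, whence $H\ne 1$; moreover $G_i\not\supseteq H$ for every $i$, for otherwise $Y_i=Z/G_i$ would be dominated by $Z^H\to A$, which is unramified in codimension one, hence \'etale by purity of the branch locus ($A$ being regular), forcing $\pi_i$ to be \'etale. It therefore suffices to find a Zariski-dense set of $a\in A(K)$ such that $\Gamma_a$ is subconjugate to no maximal subgroup $M$ of $G$ with $H\not\subseteq M$: any such $a$ lies outside $\bigcup_i\pi_i(Y_i(K))$, so a Zariski-dense set of them establishes WHP.

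Being subconjugate to a maximal subgroup containing $H$ is harmless, so the task reduces to defeating the finitely many maximal subgroups $M$ with $H\not\subseteq M$, i.e.\ to bounding the images in $A(K)$ of the maximal \emph{ramified} subcovers $Z/M\to A$. Here the weak Mordell--Weil theorem enters via isogenies: pulling $Z\to A$ back along $[n]\colon A\to A$ does not change the fibres (the fibre over $a'$ of $Z\times_{A,[n]}A\to A$ equals $Z_{[n]a'}$, and $[n]A(K)$ has finite index in $A(K)$), so one may freely replace $A$ by an isogenous abelian variety; combined with the structure theory of ramified covers of abelian varieties this reduces matters to the case where the branch divisor $D\subseteq A$ is ample. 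Fix finitely many $P_1,\dots,P_r\in A(K)$ generating a Zariski-dense subgroup $\Gamma$.

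Now I would induct on $\dim A$. If $\dim A=1$, then $A$ is an elliptic curve and, by Riemann--Hurwitz, any ramified cover of $A$ has genus $\ge 2$; by Faltings it has only finitely many $K$-points, so each $\pi_i(Y_i(K))$ is finite, and as $A(K)$ is infinite the claim is immediate. For the inductive step I would use ampleness of $D$: for $a$ ranging over a coset of a suitable finite-index subgroup of $\Gamma$, one expects several primes $v$ of $K$ at which $a$ reduces transversally onto $D$, so that an Eisenstein/Krasner-type analysis of the local equation of the cover near $D$ exhibits an inertia group of $Z\to A$ inside the decomposition group of $Z_a$ at $v$. Ampleness of $D$ is precisely what forces the Zariski-dense group $\Gamma$ to meet $D$ modulo enough primes, and in enough ``directions'', that the inertia groups so obtained generate $H$; running this over the chosen primes together with a Chebotarev argument for the \'etale quotient $Z/H\to A$ shows that $\Gamma_a$ contains $H$ and surjects onto $G/H$, hence $\Gamma_a=G$, which is subconjugate to no proper subgroup of $G$. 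Only finitely many congruence conditions at finitely many primes have been imposed, so the admissible $a$ fill out a coset of a finite-index subgroup of $\Gamma$, which is still Zariski-dense.

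The main obstacle is the inductive step --- specifically, the ample-branch-divisor case. One must ensure, for a Zariski-dense (preferably positive-density) set of $a\in A(K)$ \emph{at once}, that the \emph{global} monodromy $\Gamma_a$ is all of $G$, and not merely that some local monodromy at one auxiliary prime is large, while keeping the congruence conditions this imposes weak enough that $\Gamma$ is not thinned out. This is the unique point at which the hypothesis that $A(K)$ be Zariski-dense, rather than merely nonempty, is used essentially beyond the bookkeeping for unramified subcovers; it is also exactly where the adjective ``ramified'' in Definition~\ref{def:WHP} is indispensable, consistently with the failure of the full Hilbert property for $A$ already exhibited by $[2]\colon A\to A$.
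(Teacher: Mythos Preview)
The paper does not prove this theorem; it is quoted from \cite{CDJLZ} as an external input, and in fact the paper relies on the sharper form \cite[Theorem~1.4]{CDJLZ} (existence of a finite-index coset of any Zariski-dense subgroup $\Omega\le A(K)$ avoiding $\bigcup_i\pi_i(Y_i(K))$), which it invokes as a black box inside the proofs of Theorems~\ref{thm:intro_abelian} and~\ref{thm:intro_E_tor}. So there is no proof in this paper to compare your sketch against.

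As to the sketch itself: the d\'evissage to a single Galois cover $Z\to A$, the isolation of the normal subgroup $H$ generated by inertia, the observation that $H\not\subseteq G_i$ because each $\pi_i$ is ramified (so that $Z/H\to A$ is the maximal \'etale subcover by purity), and the base case $\dim A=1$ via Riemann--Hurwitz and Faltings are all correct and are indeed part of the standard setup. The genuine gap is your inductive step. You announce an induction on $\dim A$ but never invoke the induction hypothesis: the paragraph you label ``inductive step'' is a direct local argument (transversal reduction of $a$ onto $D$ at auxiliary primes, Eisenstein/Krasner analysis, Chebotarev for the \'etale quotient) that makes no reference to abelian varieties of smaller dimension. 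If the ``structure theory of ramified covers of abelian varieties'' you appeal to in order to reduce to an ample branch divisor is meant to be a Ueno--Kawamata fibration of $D$, then it is precisely there that a lower-dimensional quotient of $A$ appears and the inductive hypothesis should enter---but you do not say how, nor why ``ample'' (rather than merely ``of general type in the quotient'') can be arranged. The local-to-global step you describe at the end is exactly the hard core of \cite{CDJLZ}, and you yourself flag it as ``the main obstacle''; as written, it is a plausible indication of where to look, not an argument.
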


Let $K$ be a field of characteristic zero.
It is known that if $X$ has HP or WHP over $K$
and $L/K$ is finite, then the base change $X_L$ has HP resp.~WHP
(see~\cite[Prop.~12.3.3]{FJ} resp.~\cite[Prop.~3.15]{CDJLZ}),
and one can see that at least in the case of HP this holds also for $L/K$ Galois with finitely generated, or more generally small, Galois group (cf.~\cite[Prop.~16.11.1]{FJ}).
There are many results in the literature
regarding $\mathbb{P}^1$ having HP (or equivalently WHP, as $\mathbb{P}^1$ is simply connected)
over other infinite algebraic extensions $L$ of $\mathbb{Q}$ --
fields $L$ for which $\mathbb{P}^1_L$ has HP 
are called {\em Hilbertian} 
following \cite{Lang_diophantine_geometry}, see for example \cite{Kuyk}
for abelian extensions and
\cite{Haran,BFW} for two different
generalizations of this,
and \cite[Chapters 12-13]{FJ} for an extensive discussion of Hilbertian fields.
Even more precise results are known
for example for linear algebraic groups like $\mathbb{G}_m$ (see e.g.~\cite[Theorem 2.1]{Zannier}).
However,
as remarked in \cite{Zannier},
no such results are available for abelian varieties.
Starting from Theorem~\ref{thm:CZ},
and developing further the methods of \cite{Haran},
we prove in Section \ref{sec:abelian} the first such result over infinite (non-small) Galois extensions of $\mathbb{Q}$ of number theoretic interest:

\begin{theorem}\label{thm:intro_abelian}
Let $K$ be a finitely generated field 
of characteristic zero and $A$ an abelian variety over $K$.
Let $L/K$ be an abelian extension
such that 
${\rm dim}_\mathbb{Q}(A_0(L)\otimes_\mathbb{Z}\mathbb{Q})=\infty$
for every
nonzero homomorphic image $A_0$ of $A_L$.
Then $A_L$ has WHP.
\end{theorem}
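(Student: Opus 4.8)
The plan is to reduce the problem over the infinite abelian extension $L$ to the finite-level case covered by Theorem~\ref{thm:CZ}, by combining a descent argument with the method of \cite{Haran}. Since $L/K$ is abelian, $L$ is a (possibly infinite) compositum of finite abelian subextensions $K \subseteq L_\alpha \subseteq L$, and $\Gal(L/K)$ is an abelian profinite group. Given a finite collection of finite surjective ramified morphisms $\pi_i \colon Y_i \to A_L$ with $Y_i$ normal, each $\pi_i$ is already defined over some finite subextension $L_0/K$ inside $L$; enlarging $L_0$ we may assume all the $\pi_i$, and the variety $A$, are defined over $L_0$, and that $A_0(L_0)$ is as large as needed for the seed of the induction. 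The goal is then to produce a Zariski-dense set of points in $A(L_0)$ — hence in $A(L)$ — avoiding all the $\pi_i(Y_i(L))$.

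The key step is to control how points of $A(L)$ distribute over $A(L_0)$ and its intermediate fields. Here the hypothesis ${\rm dim}_\mathbb{Q}(A_0(L)\otimes\mathbb{Q}) = \infty$ for every nonzero homomorphic image $A_0$ of $A_L$ is what powers the argument: it guarantees that no finite-index "obstruction subgroup" can swallow the rational points, so that after any finite étale isogeny the Mordell–Weil group over $L$ remains infinite-rank. Concretely, I would first handle the unramified part: by the theory of \cite{CDJLZ}, the ramified covers $\pi_i$ can, after pulling back along a suitable isogeny $A' \to A_L$, be arranged so that the ramification is "visible", and the points to be avoided are governed by finitely many translates of proper subgroups together with a genuinely ramified locus. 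Then over $L_0$ one invokes Theorem~\ref{thm:CZ} — which applies because $A_{L_0}(L_0)$ is Zariski-dense (a consequence, via the infinite-dimensionality hypothesis applied to $A$ itself, of the Mordell–Weil group over some finite level already being positive rank and Zariski-dense, or by further enlarging $L_0$) — to get a Zariski-dense set of $L_0$-points of $A$ avoiding the traces of the $\pi_i$ at that level.

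The subtle point, and where the method of \cite{Haran} enters, is passing from "avoiding the covers at the finite level $L_0$" to "avoiding them over all of $L$": a point $P \in A(L_0)$ might fail to lift to $Y_i(L_0)$ yet lift to $Y_i(L)$ because the fiber $\pi_i^{-1}(P)$, a finite $L_0$-scheme, could acquire a point over some finite subextension of $L/L_0$. One must therefore track the residue fields of these fibers. The residue field of a point in $\pi_i^{-1}(P)$ is a finite extension of $L_0$, and it embeds into $L$ only if it is contained in some $L_\alpha$, i.e.\ only if it is abelian over $K$ (equivalently, its Galois closure over $K$ is abelian). So the real task is: find a Zariski-dense set of $P \in A(L_0)$ such that for every $i$, no point of $\pi_i^{-1}(P)$ has residue field abelian over $K$. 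This is exactly a Hilbert-type irreducibility statement "relative to the abelian constraint", and the infinite-rank hypothesis is used to run a Haran-style diagonal/twisting argument producing such $P$: one intersects the Zariski-dense avoidance set from Theorem~\ref{thm:CZ} with the complement of the (thin) set of $P$ whose fibers split abelianly, using that the latter is controlled by finitely many auxiliary covers to which Theorem~\ref{thm:CZ} again applies after base change to $L_0$.

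**Main obstacle.** I expect the hard part to be precisely the residue-field bookkeeping in the last paragraph: showing that the locus of $P \in A(L_0)$ for which some fiber $\pi_i^{-1}(P)$ has a point with residue field embeddable into $L$ is itself contained in the image of finitely many ramified covers of $A$ over $L_0$ (so that WHP over $L_0$, i.e.\ Theorem~\ref{thm:CZ}, can be applied to remove it). Unlike in the classical Hilbertian setting over $\mathbb{P}^1$, for abelian varieties one cannot use specialization at primes freely, and the "abelian closure" constraint interacts delicately with the group structure — this is why the infinite-dimensionality of $A_0(L)\otimes\mathbb{Q}$ for all quotients $A_0$, rather than just Zariski-density, is needed: it ensures that these auxiliary covers, which are built by composing the $\pi_i$ with isogenies and with the covers $A \xrightarrow{[n]} A$, still have Zariski-dense complement of their $L_0$-points, closing the induction.
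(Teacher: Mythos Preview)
Your overall framework is correct---descend the covers to a finite subextension $L_0 \subseteq L$, apply \cite[Theorem~1.4]{CDJLZ} there, and then use abelianity of $L/K$ to control how fibers behave over $L$---and you correctly locate the obstacle in the residue-field bookkeeping. However, the proposed resolution has a real gap. The claim that the locus of $P \in A(L_0)$ whose fiber $\pi_i^{-1}(P)$ acquires a point over $L$ is ``contained in the image of finitely many ramified covers of $A$ over $L_0$'' is false in general: take $L = K^{\rm ab}$, $L_0 = K$, and $\pi_i$ any ramified \emph{abelian} cover of $A$; then for every $P \in A(K)$ the fiber residue field is abelian over $K$, hence lies in $L$, so the bad locus is all of $A(K)$ (cf.\ Remark~\ref{rem:counterexample}). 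This locus is not thin in your sense, and the auxiliary covers you sketch (composites with isogenies and $[n]$) do not help, since the \'etale factor contributes no ramification and the ramified factor is just $\pi_i$ again.

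What the paper actually does---and what adapting \cite{Haran} to abelian varieties really requires---is not to describe the bad locus but to construct a \emph{single} auxiliary fully ramified cover whose fiber irreducibility \emph{forces} the fiber of $\pi_i$ to stay irreducible over $L$. One chooses $t \in A(L)$ with $M := K(t)$ finite abelian over $K$, sets $G := \Gal(M'/K')$, and forms the translated covers $\pi_{i,g} := \tau_{g(t')}\circ(\pi_i)_{M'}$ for $g \in G$; the compositum of their function fields gives a Galois cover of $A'$ with group embedding into the wreath product $\Gamma_i \wr G$ (Lemma~\ref{lem:fiber_product_galois}). The infinite-rank hypothesis on every simple factor is used \emph{precisely} to choose $t$ and some $\sigma \in G \setminus\{1\}$ so that $\mathrm{Branch}(\pi_{i,1})$ and $\mathrm{Branch}(\pi_{i,\sigma})$ share no component (Proposition~\ref{prop:t}, via the stabilizers of branch components being proper abelian subvarieties)---not merely for Zariski-density as you suggest. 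This makes the two-factor compositum $\rho_i$ fully ramified with group $\Gamma_i^{\{1,\sigma\}}$ (Lemma~\ref{lem:lin_disj_fully_ram}), so \cite[Theorem~1.4]{CDJLZ} applies to it. The decomposition group $D_i \leq \Gamma_i \wr G$ at the resulting point then surjects onto both $G$ and $\Gamma_i^{\{1,\sigma\}}$, and a purely group-theoretic fact (Lemma~\ref{lem:gt_abelian}: any $N \geq D_i'$ has $N \cap \Gamma_i^G$ surjecting onto $\Gamma_i^{\{1\}}$), combined with $\Gal(F_i/F_i\cap L') \supseteq D_i'$ from abelianity of $L'/K'$, forces the fiber of $\pi_i$ over $x' := c' - t'$ to remain irreducible over $L'$. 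Neither the translation-by-$t$ construction nor the wreath-product commutator lemma appears in your outline, and these are the two load-bearing ideas.
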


We remark 
that the assumption 
${\rm dim}_\mathbb{Q}(A_0(L)\otimes_\mathbb{Z}\mathbb{Q})=\infty$
seems natural in that
if $L=K^{\rm ab}$ is 
the maximal abelian extension of $K$
and $A(L)=A(K)$,
then $A_L$ does {\em not} have WHP (see Remark~\ref{rem:counterexample}).
Moreover,
the so-called Frey--Jarden conjecture,
named after a problem posed in  \cite{FreyJarden}, 
predicts that
in the case $K=\mathbb{Q}$
and $L=\mathbb{Q}^{\rm ab}$, 
every abelian variety
$A$ over $\mathbb{Q}$
(and then in fact also every abelian variety
$A$ over $K^{\rm ab}$ for a number field $K$,~cf.~Lemma~\ref{lem:rank_res})
satisfies this condition.

\begin{corollary}\label{cor:intro_Frey_Jarden}
Let $K$ be a number field and
assume that the Frey--Jarden conjecture holds
(i.e.~Question \ref{q:FreyJarden} has a positive answer).
Then every abelian variety $A$ over $K^{\rm ab}$ 
has WHP.
\end{corollary}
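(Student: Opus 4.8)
The plan is to deduce Corollary~\ref{cor:intro_Frey_Jarden} from Theorem~\ref{thm:intro_abelian} by descending $A$ to a number field and then feeding in the Frey--Jarden conjecture to meet the rank hypothesis. Since an abelian variety is of finite presentation, $A$ is already defined over a subfield of $K^{\rm ab}$ finitely generated over $\mathbb{Q}$: taking $K_1 := K(S)$, where $S \subseteq K^{\rm ab}$ is the finite set of coefficients appearing in equations for $A$, its group law and its inversion, gives a number field $K_1$ with $K \subseteq K_1 \subseteq K^{\rm ab}$ and $[K_1:K] < \infty$, together with an abelian variety $A_1$ over $K_1$ such that $A_{1,K^{\rm ab}} \cong A$ (properness, smoothness and geometric integrality all descend). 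Now $L := K^{\rm ab}$ is an abelian extension of $K_1$, because $\Gal(K^{\rm ab}/K_1)$ is a subgroup of the abelian group $\Gal(K^{\rm ab}/K)$; this is the one place where it matters that $K \subseteq K_1$, since over a field of definition not containing $K$ the extension $K^{\rm ab}$ need not even be Galois.

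With $K_1$, $A_1$ and $L = K^{\rm ab}$ fixed, all hypotheses of Theorem~\ref{thm:intro_abelian} hold except possibly the rank condition, which for $A_{1,L} = A$ demands that ${\rm dim}_{\mathbb{Q}}\bigl(A_0(K^{\rm ab}) \otimes_{\mathbb{Z}} \mathbb{Q}\bigr) = \infty$ for every nonzero homomorphic image $A_0$ of $A$. Such an $A_0$ is again an abelian variety over $K^{\rm ab}$, so this is precisely the statement that every abelian variety over $K^{\rm ab}$ has infinite $\mathbb{Q}$-rank over $K^{\rm ab}$ --- exactly the assertion recorded, conditionally on the Frey--Jarden conjecture, in the discussion preceding the corollary and in Lemma~\ref{lem:rank_res}, which I would simply invoke. (Its mechanism: choose a number field $F \subseteq K^{\rm ab}$ of definition of $A_0$; then $B := \mathrm{Res}_{F/\mathbb{Q}} A_0$ is a nonzero abelian variety over $\mathbb{Q}$, so $B(\mathbb{Q}^{\rm ab}) \otimes_{\mathbb{Z}} \mathbb{Q}$ is infinite-dimensional by the conjecture; since $\mathbb{Q}^{\rm ab}/\mathbb{Q}$ is Galois, $B(\mathbb{Q}^{\rm ab})$ is, up to finite multiplicity, a copy of $A_0(\mathbb{Q}^{\rm ab}\cdot F)$, and $\mathbb{Q}^{\rm ab}\cdot F \subseteq K^{\rm ab}$ because $\mathbb{Q}^{\rm ab} \subseteq K^{\rm ab}$; hence $A_0$ has infinite $\mathbb{Q}$-rank over $K^{\rm ab}$.) Granting the rank condition, Theorem~\ref{thm:intro_abelian} gives that $A_{1,L} = A$ has WHP, which is the claim.

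I do not expect a genuine obstacle here: the corollary is essentially a formal reduction, with all the substance sitting in Theorem~\ref{thm:intro_abelian} and Lemma~\ref{lem:rank_res}. The two points that do want a little attention are the ones noted above --- arranging the field of definition $K_1$ to contain $K$ so that $K^{\rm ab}/K_1$ stays abelian, and confirming that the Frey--Jarden conjecture, which a priori concerns only abelian varieties over $\mathbb{Q}$, still supplies the rank hypothesis for every homomorphic image of $A$ over $K^{\rm ab}$, including those admitting no model over $\mathbb{Q}$; the latter is taken care of by the restriction-of-scalars argument above (i.e.\ by Lemma~\ref{lem:rank_res}).
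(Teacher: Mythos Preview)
Your proposal is correct and matches the paper's approach exactly: the paper's proof is the single line ``This follows from Theorem~\ref{thm:intro_abelian} and Lemma~\ref{lem:rank_res},'' and you have simply spelled out the implicit steps --- descending $A$ to a number field $K_1$ with $K\subseteq K_1\subseteq K^{\rm ab}$ so that $K^{\rm ab}/K_1$ remains abelian, and invoking Lemma~\ref{lem:rank_res} (with $K=\mathbb{Q}$, $M=\mathbb{Q}^{\rm ab}$, $L=K^{\rm ab}$) to supply the rank hypothesis of Theorem~\ref{thm:intro_abelian}. One minor remark: in your parenthetical sketch of the mechanism of Lemma~\ref{lem:rank_res}, the identification of $B(\mathbb{Q}^{\rm ab})$ with copies of $A_0(\mathbb{Q}^{\rm ab}\cdot F)$ is cleanest when $F/\mathbb{Q}$ is Galois (as the paper arranges), but since you are explicitly invoking the lemma rather than reproving it, this does not affect the argument.
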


By \cite[Theorem 1.2]{FehmPetersen}, the conjecture
would follow for example if $\mathbb{Q}^{\rm ab}$ is {\em ample} (or {\em large}), cf.~\cite[\S3.3]{BFsurvey}.
See  \cite{Petersen,SY,ImLarsen} and the references therein for progress on the Frey--Jarden conjecture for specific abelian varieties. 
Since the conjecture holds 
for elliptic curves over $\mathbb{Q}$  (as proven already in \cite{FreyJarden}, cf.~Proposition \ref{prop:FreyJarden}),
as a special case of Theorem \ref{thm:intro_abelian}
we obtain:

\begin{corollary}\label{cor:intro_E}
Let $E$ be an elliptic curve over $\mathbb{Q}$.
Then $E_{\mathbb{Q}^{\rm ab}}$ has WHP.
\end{corollary}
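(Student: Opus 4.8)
The plan is to deduce the statement directly from Theorem~\ref{thm:intro_abelian}, applied with $K=\mathbb{Q}$, $A=E$ and $L=\mathbb{Q}^{\rm ab}$: here $K=\mathbb{Q}$ is finitely generated of characteristic zero and $L/K$ is abelian, so the only thing left to verify is the rank hypothesis $\dim_\mathbb{Q}(E_0(\mathbb{Q}^{\rm ab})\otimes_\mathbb{Z}\mathbb{Q})=\infty$ for every nonzero homomorphic image $E_0$ of $E_{\mathbb{Q}^{\rm ab}}$.

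First I would reduce this to a statement about $E$ itself. Since $\dim E = 1$, every nonzero homomorphic image $E_0$ of $E_{\mathbb{Q}^{\rm ab}}$ is again one-dimensional, and hence the quotient morphism $E_{\mathbb{Q}^{\rm ab}}\to E_0$ is an isogeny defined over $\mathbb{Q}^{\rm ab}$. Composing with the dual isogeny shows that the induced homomorphism $E(\mathbb{Q}^{\rm ab})\otimes_\mathbb{Z}\mathbb{Q}\to E_0(\mathbb{Q}^{\rm ab})\otimes_\mathbb{Z}\mathbb{Q}$ is an isomorphism of $\mathbb{Q}$-vector spaces, so it suffices to prove that $E(\mathbb{Q}^{\rm ab})$ has infinite rank. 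The one point that deserves a little care is that ``homomorphic image'' in Theorem~\ref{thm:intro_abelian} refers to a quotient abelian variety over $L=\mathbb{Q}^{\rm ab}$, and it is over this field that the reduction via isogenies takes place.

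Finally, the infinitude of the rank of $E(\mathbb{Q}^{\rm ab})$ is exactly the Frey--Jarden theorem for elliptic curves over $\mathbb{Q}$, recorded above as Proposition~\ref{prop:FreyJarden} (going back to \cite{FreyJarden}); equivalently, the Frey--Jarden conjecture of Question~\ref{q:FreyJarden} holds for $E/\mathbb{Q}$, so that Corollary~\ref{cor:intro_Frey_Jarden} applies with $K=\mathbb{Q}$ and gives WHP for $E_{\mathbb{Q}^{\rm ab}}$. I do not expect a genuine obstacle in this deduction: the corollary is a formal consequence of two results already in hand, Theorem~\ref{thm:intro_abelian} and Proposition~\ref{prop:FreyJarden}, with all the substance residing in those two statements.
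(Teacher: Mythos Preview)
Your proposal is correct and follows essentially the same route as the paper: apply Theorem~\ref{thm:intro_abelian} with $K=\mathbb{Q}$, $A=E$, $L=\mathbb{Q}^{\rm ab}$, reduce the rank hypothesis on all nonzero homomorphic images of $E_{\mathbb{Q}^{\rm ab}}$ to that on $E$ itself via isogeny invariance of rank (the paper cites Remark~\ref{lem:rank} for this, while you argue it directly via the dual isogeny), and then invoke Proposition~\ref{prop:FreyJarden}.
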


While by the Kronecker--Weber theorem, $\mathbb{Q}^{\rm ab}$
can be seen as the field obtained by adjoining all torsion points
of the linear algebraic group $\mathbb{G}_m$ to $\mathbb{Q}$,
our second result
concerns the extension $\mathbb{Q}(A_{\rm tor})$
obtained from $\mathbb{Q}$
by adjoining all torsion points of an abelian variety $A$ over $\mathbb{Q}$.
The fact that $\mathbb{P}_{\mathbb{Q}(A_{\rm tor})}^1$ has HP
was first proven by Jarden \cite{Jarden}
(which inspired for example the above mentioned \cite{BFW}).
In Section \ref{sec:tor} we combine
Theorem \ref{thm:CZ} with the method of \cite{Jarden} to obtain:

\begin{theorem}\label{thm:intro_E_tor}
Let $A$ be an abelian variety over $\mathbb{Q}$ and $E$
an elliptic curve over $\mathbb{Q}$.
Then $E_{\mathbb{Q}(A_{\rm tor})}$
has WHP.
\end{theorem}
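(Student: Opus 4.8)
The plan is to reduce to covers defined over a number field, apply Theorem~\ref{thm:CZ} there, and then use a field‑crossing argument of the type Jarden used to prove that $\mathbb{P}^1_{\mathbb{Q}(A_{\rm tor})}$ is Hilbertian in order to pass from the number field up to $\mathbb{Q}(A_{\rm tor})$.

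\emph{Reduction to a number field.} Since $E$ is a curve, ``Zariski‑dense'' means ``infinite'', so it suffices to exhibit infinitely many $P\in E(\mathbb{Q}(A_{\rm tor}))$ lying in none of the images $\pi_i(Y_i(\mathbb{Q}(A_{\rm tor})))$. Writing $\mathbb{Q}(A_{\rm tor})=\bigcup_n\mathbb{Q}(A[n])$ as an increasing union of number fields and using that the $Y_i,\pi_i$ are of finite presentation, each $Y_i$ descends, after enlarging $n$, to a normal variety $Y_{i,0}$ over $K_0:=\mathbb{Q}(A[n_0])$ with $Y_i=Y_{i,0}\times_{K_0}\mathbb{Q}(A_{\rm tor})$ and $\pi_{i,0}\colon Y_{i,0}\to E_{K_0}$ finite surjective and still ramified (ramification is insensitive to extension of the ground field). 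Next, by the Weil pairing $\mathbb{Q}(\mu_n)\subseteq\mathbb{Q}(A[n])$, so $\mathbb{Q}^{\rm ab}\subseteq\mathbb{Q}(A_{\rm tor})$; as $E(\mathbb{Q}^{\rm ab})$ is infinite (Proposition~\ref{prop:FreyJarden}) and $\mathbb{Q}^{\rm ab}=\bigcup_n\mathbb{Q}(\mu_n)$, some $E(\mathbb{Q}(\mu_{n_1}))$ is infinite. Putting $K:=\mathbb{Q}(A[m])$ with $m$ a common multiple of $n_0,n_1$ gives a number field $K$ with $K_0\subseteq K\subseteq\mathbb{Q}(A_{\rm tor})$, $K(A_{\rm tor})=\mathbb{Q}(A_{\rm tor})$, all $\pi_i$ defined over $K$, and $E(K)$ infinite hence Zariski‑dense. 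We are thus reduced to: \emph{for a number field $K$, an abelian variety $A$ and an elliptic curve $E$ over $K$ with $E(K)$ Zariski‑dense, and finitely many ramified covers $\pi_i\colon Y_i\to E_K$, the set $E(K(A_{\rm tor}))\smallsetminus\bigcup_i\pi_i(Y_i(K(A_{\rm tor})))$ is infinite.}

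\emph{Using Theorem~\ref{thm:CZ}, and what remains.} Over every number field $K'$ with $K\subseteq K'\subseteq K(A_{\rm tor})$ the curve $E$ still has infinitely many points, so $E_{K'}$ has WHP by Theorem~\ref{thm:CZ}; in particular the set $E(K)\smallsetminus\bigcup_i\pi_i(Y_i(K))$ is infinite. Now fix $P\in E(K)$ and an index $i$, and let $S_{i,P}$ be the splitting field over $K$ of the finite $K$‑scheme $Y_{i,P}$ (the fibre of $\pi_i$ over $P$), a finite Galois extension of $K$. For $K\subseteq K'\subseteq K(A_{\rm tor})$ one has $P\in\pi_i(Y_i(K'))$ if and only if $Y_{i,P}$ has a $K'$‑point, which, since $K'/K$ is Galois, forces the image of $\Gal(\overline K/K')$ in $\Gal(S_{i,P}/K)$ to fix a geometric point of $Y_{i,P}$. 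Hence, if $P\notin\pi_i(Y_i(K))$ (so $\Gal(S_{i,P}/K)$ has no fixed point on $Y_{i,P}$) \emph{and} $S_{i,P}$ is linearly disjoint from $K(A_{\rm tor})$ over $K$ (so $\Gal(\overline K/K(A_{\rm tor}))$ still surjects onto $\Gal(S_{i,P}/K)$), then $P\notin\pi_i(Y_i(K(A_{\rm tor})))$. It therefore suffices to find infinitely many $P\in E(K)$ that avoid each $\pi_i(Y_i(K))$ \emph{and} for which every splitting field $S_{i,P}$ meets $K(A_{\rm tor})$ only in $K$.

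\emph{The field‑crossing step and the main obstacle.} The crucial structural fact is that $\Gal(K(A_{\rm tor})/K)$ is a closed subgroup of the restricted product $\prod_\ell\Aut(T_\ell A)\subseteq\prod_\ell GL_{2g}(\mathbb{Z}_\ell)$, each factor of which is $\ell$‑adic analytic, hence topologically finitely generated; concretely, $K(A_{\rm tor})=K(A[2^\infty])\cdot\prod_{\ell\ \mathrm{odd}}K(A[\ell^\infty])$ is a compositum of two Galois extensions of $K$, neither contained in the other. Following Jarden~\cite{Jarden} and the methods of Haran~\cite{Haran}, one would use this decomposition together with the Hilbertianity of the number field $K$ to run the diamond‑theorem machinery and force the required linear disjointness of the fibre splitting fields from $K(A_{\rm tor})$, while at the same time keeping control, via Theorem~\ref{thm:CZ}, over the rational points of $E$ that arise over the number fields occurring in the argument. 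I expect this synthesis to be the main obstacle: the field‑crossing / embedding‑problem step of Haran's diamond theorem must be carried out using only the weak Hilbert property for $E$ over the relevant number fields — not the full Hilbert property, which $E$ lacks — so that all auxiliary covers entering the construction have to be kept ramified. A secondary, more routine difficulty is the bookkeeping of the reduction step, namely ensuring the descended covers remain ramified and that $E$ acquires infinitely many points over a number field \emph{inside} $\mathbb{Q}(A_{\rm tor})$, which is precisely what the inclusion $\mathbb{Q}^{\rm ab}\subseteq\mathbb{Q}(A_{\rm tor})$ coming from the Weil pairing provides.
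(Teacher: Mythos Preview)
Your high-level strategy is right and matches the paper's: descend to a number field, use $\mathbb{Q}^{\rm ab}\subseteq\mathbb{Q}(A_{\rm tor})$ via the Weil pairing to get positive rank, apply \cite{CDJLZ} over number fields, and then run a Haran/Jarden-type field-crossing argument. But your proposal stops precisely at the hard step: you write ``I expect this synthesis to be the main obstacle'' and never explain how to keep the auxiliary covers ramified, which is exactly what the argument hinges on. Haran's diamond method in its original form builds a fibered cover with wreath-product Galois group and uses the \emph{full} Hilbert property of the base to specialize irreducibly; here the base is $E$, which only has WHP, so that fibered cover must itself be fully ramified, and this does not come for free. Simply asking for $P\in E(K)$ whose fibre splitting fields are linearly disjoint from $K(A_{\rm tor})$ is a condition you have no tool to enforce.

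The paper resolves this as follows. It first reduces (Lemma~\ref{lem:red_to_irred}) to fully ramified geometrically integral Galois covers $\pi_i\colon Y_i\to E'$. Then, rather than seeking $P\in E(K)$, it constructs a specific $t=t_1+t_2\in E(M)$ where $M/K$ is a Klein-four extension built from two quadratic points $(k_j,\sqrt{f(k_j)})$ on the Weierstrass model; an elementary number-theoretic lemma (Lemma~\ref{lem:values_of_pol}) controls the primes dividing $f(k_j)$ so that $K'(\sqrt{f(k_j)})\subseteq\prod_{p\in P_j}K'(A[p^\infty])$ for \emph{complementary} sets $P_1,P_2$ of primes, and Serre's independence theorem is invoked to make the $K'(A[p^\infty])$ linearly disjoint. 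Translating $\pi_i$ by the four Galois conjugates of $t'=\alpha'(t)$ yields covers with pairwise disjoint branch loci, so their compositum is again fully ramified (Lemma~\ref{lem:lin_disj_fully_ram}) with Galois group the full wreath product $\Gamma_i\wr G$ (Lemma~\ref{lem:fiber_product_galois}). Now \cite[Thm.~1.4]{CDJLZ} applies to this fully ramified cover, and a short group-theoretic lemma on wreath products (Lemma~\ref{lem:gt_tor}), exploiting the product decomposition ${\rm Gal}(L'/K')={\rm Gal}(L'/N_1)\times{\rm Gal}(L'/N_2)$ coming from the prime-by-prime structure of $K(A_{\rm tor})$, forces the specialized fibre to remain irreducible over $L'$. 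The missing idea in your sketch is precisely this translation-by-$t$ trick together with the arithmetic control on $t$ via Lemma~\ref{lem:values_of_pol}; without it there is no mechanism to manufacture fully ramified auxiliary covers, and the diamond machinery cannot be run on WHP alone.
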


This can be seen as a partial result
towards a conjecture by Zannier
\cite[\S2]{Zannier}
regarding 
{\em torsion} points of $A$
coming from
$\mathbb{Q}(A_{\rm tor})$-rational points on ramified covers of $A$.
In a different direction
we remark that by a result of Larsen \cite{Larsen}
(see also \cite[Corollary 1.2]{Habegger}, \cite[Lemma A.7]{BHP}),
$E(\mathbb{Q}(E_{\rm tor}))\cong E_{\rm tor}\oplus\mathbb{Z}^\omega$,
and so $2E(\mathbb{Q}(E_{\rm tor}))$
has {\em infinite} index in $E(\mathbb{Q}(E_{\rm tor}))$,
which eliminates the above obstruction to HP for $E$ over a number field.
This suggests the following question:

\begin{question}
Let $E$ be an elliptic curve over $\mathbb{Q}$.
Does $E_{{\mathbb{Q}}(E_{\rm tor})}$ have HP?
\end{question}

We conclude this introduction 
by saying that in fact \cite{CDJLZ} proves a more precise statement than Theorem \ref{thm:CZ},
and it is this stronger statement that we use in our proofs.
This stronger statement however does not carry over to abelian varieties over $\mathbb{Q}^{\rm ab}$ or $\mathbb{Q}(A_{\rm tor})$, 
by the above mentioned Remark \ref{rem:counterexample}.
We point out that our results are specific to
abelian varieties and the extensions
$\mathbb{Q}^{\rm ab}$
and $\mathbb{Q}(A_{\rm tor})$,
and it is not immediately clear how to extend them to other varieties or other extensions.
For example we even do not know whether if $X$ is a $K$-variety with WHP
and $L/K$ is a Galois extension with finitely generated Galois group,
then $X_L$ has WHP.

We begin with some preliminaries in Section \ref{sec:prelim}.
In Section \ref{sec:sketch}
we
describe the strategy of our proofs of
Theorems~\ref{thm:intro_abelian} and \ref{thm:intro_E_tor}
and provide a few lemmas that we will use there.
Sections~\ref{sec:abelian} and~\ref{sec:tor}
finally contain the proofs of
Theorems~\ref{thm:intro_abelian} (and its corollaries)
respectively \ref{thm:intro_E_tor}.

\section{Definitions and preliminaries}
\label{sec:prelim}

\noindent 
In this section we fix some definitions and collect a few auxiliary results that are mostly well-known to experts.

Throughout this paper
let $K$ be a field of characteristic zero.
We denote by $\bar{K}$ an algebraic closure of $K$,
and by ${\rm Gal}(K)={\rm Gal}(\bar{K}/K)$ the absolute Galois group of $K$.

\begin{definition}\label{def:var}
By a {\em $K$-variety}
we mean a separated integral scheme of finite type over $K$.
If $X$ is a $K$-variety
and $L/K$ a field extension
we denote by $X_L=X\times_{{\rm Spec}(K)}{\rm Spec}(L)$ the base change,
and if $\pi\colon Y\rightarrow X$ is a morphism of $K$-varieties we write accordingly $\pi_L\colon Y_L\rightarrow X_L$.
If $X$ is a 
$K$-variety and $F$ a finite extension of its function field $K(X)$, then we
denote by $g\colon X^{(F)}\to X$ the {\em normalization} of $X$ in $F$, see \cite[II.6.3]{EGAII}. 

A {\em cover}
is a finite surjective morphism $\pi\colon Y\rightarrow X$
of normal $K$-varieties. 
The {\em degree} $\deg(Y/X)$ of a cover $Y\to X$ is the degree of the 
associated function field extension $K(Y)/K(X)$. 
We say that a cover $Y\to X$ of $K$-varieties is {\em geometrically integral}
if $Y$ is geometrically integral as a $K$-variety.
An {\em intermediate cover} of a cover $\pi\colon Y\to X$ is a triple $(Z, f, g)$ where
$Z$ is a normal $K$-variety, $f\colon Y\to Z$ and $g\colon Z\to X$ are covers and $g\circ f=\pi$. 
We then also call $g$ a {\em subcover} of $\pi$. 
Two intermediate covers $(Z, f, g)$ and $(Z', f', g')$ of $\pi$ are said to be isomorphic if there exists an isomorphism $h\colon Z\to Z'$ such that $h\circ f=f'$ and $g'\circ h=g$.

For a cover $\pi\colon Y\rightarrow X$ of $K$-varieties
we define $\mathrm{Ram}(\pi)$ to be the set of all $y\in Y$ such that $\pi$ is ramified at $y$ and call $\mathrm{Ram}(\pi)$ the {\em ramification locus of $\pi$}. The set $\mathrm{Branch}(\pi):=\pi(\mathrm{Ram}(\pi))$ is  the {\em branch locus of $\pi$}.  
The cover $\pi$ is {\em Galois} if the associated function field extension $K(Y)/K(X)$ is a Galois extension.
In that case we denote by
${\rm Gal}(Y/X) := {\rm Gal}(K(Y)/K(X))$ the Galois group of $\pi$,
and 
for $y\in Y$ and $x=\pi(y)$ we let
$$
 D(y/x)=\left\{\sigma\in{\rm Gal}(Y/X)=({\rm Aut}_XY)^{\rm opp}:\sigma(y)=y\right\}
$$
denote the {\em decomposition group} of $y$ over $x$.
\end{definition}

\begin{remark} \label{rem:varieties} 
\label{lem:normalization}Let $X$ be a $K$-variety. 
Then $X$ is geometrically integral if and only if $K$ is algebraically closed in $K(X)$
(cf.~\cite[IV.4.5.9, IV.4.6.1, IV.4.3.1]{EGAIV2}). 
If $X$ is normal, then $X$ is automatically geometrically normal
(cf.~\cite[IV.6.14.2]{EGAIV4}). 
Furthermore, if a separated $K$-scheme of finite type is normal and connected, then it is a $K$-variety. 
If $X$ is a normal $K$-variety and $F$ a finite extension of $K(X)$,
then $g\colon X^{(F)}\to X$ is a cover (finiteness of $g$ follows from \cite[Ch 0, 23.1.1., 23.1.2]{EGAIV1}).
\end{remark}

\begin{remark}\label{rem:closed}
Let $X$ be a $K$-variety, $C\subseteq X$ a closed subset, $K'/K$ a field extension and $u\colon X_{K'}\to X$ the canonical morphism. 
Then $C'=\{x'\in X_{K'}|u(x')\in C\}$ 
is a closed subset of $X_{K'}$, and we endow, as usual, $C$ and $C'$ with the reduced induced 
subscheme structure. The closed immersion $C_{K'}\to X_{K'}$ 
has underlying space $C'$ 
and $C_{K'}$ is reduced (because $\mathrm{char}(K)=0$, cf. \cite[IV.4.6.1]{EGAIV2}). Thus $C'$ and $C_{K'}$ are $X_{K'}$-isomorphic 
and we will tacitly identify them in the sequel. 
\label{rem:bchange}
Moreover, if  $f\colon X\to Y$ is a cover of $K$-varieties, 
then
$f_{K'}(C_{K'})=f(C)_{K'}$. 
\end{remark}

\begin{lemma}\label{lem:Branch}
\label{lem:unramet}
\label{lem:ramparts} 
Let $\pi\colon Y\rightarrow X$ be a cover of $K$-varieties.
\begin{enumerate}[(a)]
\item $\pi$ is unramified at $y\in Y$ if and only if $\pi$ is \'etale at $y$.
 \item The branch locus ${\rm Branch}(\pi)$ is a proper closed subset of $X$.
 \item  If $K'/K$ is a field extension, then ${\rm Branch}(\pi_{K'})={\rm Branch}(\pi)_{K'}$. 
\item If $\psi\colon Z\rightarrow Y$ is another cover, 
then $\pi\circ\psi$ is unramified at 
$z\in Z$ if and only if $\psi$ is unramified at $z$ and $\pi$ is unramified at $\psi(z)$. 
In particular, 
${\rm Branch}(\pi\circ \psi)=\pi({\rm Branch}(\psi))\cup{\rm Branch}(\pi)$. 
 \item If $X$ is regular, then every irreducible component of ${\rm Branch}(\pi)$ has codimension one in $X$.
\end{enumerate}
\end{lemma}

\begin{proof}
(a) follows from 
\cite[IV.18.10.1]{EGAIV4} and \cite[Ch.\ 0, 26.1]{EGAIV1},
or see
\cite[Lemma 2.3]{CDJLZ}.
For (b),
${\rm Ram}(\pi)$ is closed 
\cite[Tag 0C3J]{Stacks},
so as $\pi$ is finite (and hence closed), 
this implies that ${\rm Branch}(\pi)=\pi({\rm Ram}(\pi))$ is closed.
The generic fiber of $\pi$ is \'etale because $\mathrm{char}(K)=0$,
hence ${\rm Ram}(\pi)\subsetneqq Y$ and ${\rm Branch}(\pi)\subsetneqq X$.
Part (c) is immediate from \cite[IV.17.7.4]{EGAIV4} and Remark~\ref{rem:closed}.
For (d),
composition of unramified morphisms is unramified \cite[Tag 02G9]{Stacks}.
Conversely,
if $\pi\circ\psi$ is unramified at $z$,
then $\mfm_{X,x}\OOO_{Z,z}=\mfm_{Z,z}$, $\mfm_{X,x}\OOO_{Y,y}\subseteq \mfm_{Y,y}$, and
$\mfm_{Y,y}\OOO_{Z,z}\subseteq \mfm_{Z,z}$,
thus 
$\mfm_{Y,y}\OOO_{Z,z}=\mfm_{Z,z}$,
which shows that $\psi$ is unramified at $z$. 
So by (a), $\psi$ is \'etale at $z$, in particular flat at $z$,
hence \cite[IV.17.7.7]{EGAIV4} gives that $\pi$ is unramified at $y$.
If $X$ is regular as in (e),
then by the purity theorem of Zariski--Nagata \cite[Exp. X, 3.1]{SGA1}, every irreducible component of ${\rm Ram}(\pi)$ 
has codimension one in $Y$,
hence since $\pi$ is finite, ${\rm Branch}(\pi)$ is a finite union of irreducible subsets of $X$ of codimension one.
\end{proof}

\begin{definition}
    Let $K_1,K_2$ be fields of characteristic zero and $g\colon K_1\to K_2$ an isomorphism. 
    Then, pulling back along $\Spec(g^{-1})\colon \Spec(K_1)\to \Spec(K_2)$ induces a functor from the category of $K_2$-varieties to the category of $K_1$-varieties. 
    We denote it  by 
    $$
     Y\mapsto Y^g:=Y\times_{\Spec(K_2), \Spec(g^{-1})} \Spec(K_1)\ \quad\mbox{and}\quad \pi\mapsto \pi^g:=\pi\times_{\Spec(K_2), \Spec(g^{-1})} \Spec(K_1)
    $$ 
    where $\pi\colon Y\to X$ and $\pi^g\colon Y^g\to X^g$. 
 For a $K_2$-variety $X$ we denote by $g_X\colon X\to X^g$ the
    isomorphism
    $X=X^g\times_{{\rm Spec}(K_1),{\rm Spec}(g)}{\rm Spec}(K_2)\rightarrow X^g$.
\end{definition}

\begin{remark}\label{rem:gX}
    \label{p_pi-g}
    Let $K\subseteq K_1,K_2$ be fields and $g\colon K_1\to K_2$ a $K$-isomorphism.
Then $g_X$ is a natural transformation in $X$, i.e.\ for every morphism of $K_2$-varieties $\pi\colon Y\to X$ the following diagram commutes:
        \[
           \begin{tikzcd}
            Y \arrow[r, "g_Y"] \arrow[d, "\pi"] & Y^g \arrow[d, "\pi^g"] \\
            X \arrow[r, "g_X"] \arrow[d]        & X^g \arrow[d]            \\
            {\rm Spec}(K_2) \arrow[r, "{\rm Spec}(g)"]        & {\rm Spec}(K_1).     
    \end{tikzcd}
    \]
        If $\pi\colon Y\to X$ is a Galois cover of  $K_2$-varieties, then $\pi^g\colon Y^g\to X^g$ is a Galois cover of $K_1$-varieties and we have a natural isomorphism $\iota_g\colon \Gal(Y/X)\to \Gal(Y^g/X^g)$
        given by $\sigma\mapsto g_Y\circ\sigma\circ g_Y^{-1}$.
\end{remark}

\begin{remark}
\label{rem:normalization}
\label{rem:GT}
Let $\pi\colon Y\to X$ be a cover of $K$-varieties. 
There is a 1-to-1 correspondence between intermediate fields
of $K(Y)/K(X)$ and isomorphism classes of intermediate covers of $\pi$ given by 
$F\mapsto X^{(F)}$.
If $\pi$ is Galois with Galois group $\Gamma:=\Gal(Y/X)$, 
the canonical map $\Gamma^{\rm opp}\to \Aut_X(Y)$ is bijective and  
the canonical morphism $Y/\Gamma\to X$ is an isomorphism, where $Y/\Gamma$ denotes the geometric quotient of $Y$ by $\Gamma$ in the sense of \cite{SGA1},
so by Galois theory there is a 1-to-1 correspondence between subgroups
of $\Gamma$ and isomorphism classes of intermediate covers of $\pi$ given by 
$H\mapsto X^{(K(Y)^H)}=Y/H$,
where $K(Y)^H$ denotes the fixed field of $H$ in $K(Y)$.
\end{remark}


\begin{lemma}\label{lem:decomposition_group}
Let $\pi\colon Y\rightarrow X$ be a Galois cover of $K$-varieties
that is unramified at $y\in Y$. 
Let 
$Y\stackrel\rho\rightarrow Z\rightarrow X$ 
be an intermediate cover of $\pi$,
and let $z=\rho(y)$, $x=\pi(y)$.
\begin{enumerate}[(a)]
    \item $K(y)/K(x)$ is Galois
      and there is a canonical isomorphism $D(y/x)\stackrel{\cong}{\rightarrow}{\rm Gal}(K(y)/K(x))$.
    \item ${\rm Gal}(Y/Z)\subseteq D(y/x)$ if and only if $\rho^{-1}(z)=\{y\}$.
    \item $D(y/z)=D(y/x)\cap{\rm Gal}(Y/Z)$
    \item If $Z\rightarrow X$ is Galois, then $D(z/x)$ is the image of $D(y/x)$ under the restriction map ${\rm Gal}(Y/X)\rightarrow{\rm Gal}(Z/X)$, and the following diagram commutes:
    $$
     \xymatrix{
     {\rm Gal}(Y/X)\ar[d]^{\rm res} && D(y/x)\ar[d]^{\rm res}\ar@{_{(}->}[ll]\ar[rr]^\cong && {\rm Gal}(K(y)/K(x))\ar[d]^{\rm res} \\
      {\rm Gal}(Z/X) && D(z/x)\ar@{_{(}->}[ll]\ar[rr]^\cong && {\rm Gal}(K(z)/K(x))
     }
    $$
\end{enumerate}
\end{lemma}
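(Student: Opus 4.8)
The plan is to reduce everything to the case of a Galois cover that is unramified at a point, where standard scheme-theoretic facts about decomposition groups apply, and then transport the statement through the normalization-in-a-field dictionary set up in Remark~\ref{rem:normalization} and Remark~\ref{rem:GT}. Concretely, write $\Gamma = {\rm Gal}(Y/X)$, let $H = {\rm Gal}(Y/Z)$ so that $Z = Y/H$ by Remark~\ref{rem:GT}(b), and let $D = D(y/x)$. Since $\pi$ is unramified at $y$, by Lemma~\ref{lem:unramet} it is \'etale at $y$, so one may pass to the local rings $\OOO_{X,x}\subseteq\OOO_{Z,z}\subseteq\OOO_{Y,y}$ and to their henselizations (or completions), where $\OOO_{Y,y}^{\rm h}$ is a finite \'etale $\OOO_{X,x}^{\rm h}$-algebra on which $D$ acts with $(\OOO_{Y,y}^{\rm h})^D = \OOO_{X,x}^{\rm h}$, making $\OOO_{Y,y}^{\rm h}/\OOO_{X,x}^{\rm h}$ a $D$-Galois extension of local rings and correspondingly $K(y)/K(x)$ a Galois extension of residue fields.

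For part~(a), the isomorphism $D(y/x)\xrightarrow{\cong}{\rm Gal}(K(y)/K(x))$ is the reduction-mod-maximal-ideal map: it is injective because an element of $D$ acting trivially on the residue field is, by \'etaleness (Nakayama applied to $\mfm_{Y,y} = \mfm_{X,x}\OOO_{Y,y}$ and the finiteness of $\OOO_{Y,y}$ over $\OOO_{X,x}$ after henselizing), the identity on the henselian local ring, and surjective by a counting argument: $|D| = |\Gamma|/|\pi^{-1}(x)|$ since $\Gamma$ acts transitively on $\pi^{-1}(x)$, and this equals $\deg(Y/X)$ divided by the number of points in the fiber, which by \'etaleness over the henselian base equals $[K(y):K(x)]$. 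For part~(b), $\rho^{-1}(z)$ is the set of points of $\pi^{-1}(x)$ lying over $z$, i.e.\ the $H$-orbit structure: $\rho^{-1}(z) = \{y\}$ iff $H$ stabilizes $y$, i.e.\ $H\subseteq D$; this is pure group theory once one knows $\pi^{-1}(x)\cong\Gamma/D$ as a $\Gamma$-set and $\rho^{-1}(z)$ corresponds to $H\backslash(\Gamma/D)$-fiber over the coset fixed by... more precisely the $H$-orbit of $y$, which is a singleton iff $H\subseteq D(y/x)$.

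For part~(c), $D(y/z) = \{\sigma\in{\rm Gal}(Y/Z):\sigma(y)=y\} = H\cap D(y/x)$ directly from the definitions, since ${\rm Gal}(Y/Z) = H$ sits inside ${\rm Gal}(Y/X) = \Gamma$. For part~(d), when $Z\to X$ is Galois, $H\trianglelefteq\Gamma$ and ${\rm Gal}(Z/X) = \Gamma/H$; the restriction map ${\rm Gal}(Y/X)\to{\rm Gal}(Z/X)$ is the quotient $\Gamma\to\Gamma/H$, its restriction to $D(y/x)$ has image the coset-stabilizer of $z$, which is exactly $D(z/x)$ (an element of $\Gamma/H$ fixes $z$ iff some lift fixes $y$ up to $H$, but since $H$ permutes the fiber over $z$ and $z$ is the image of $y$, this image is precisely $D(z/x)$). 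The commutativity of the diagram is then the naturality of reduction mod maximal ideals: the residue field of $z$ is $K(y)^{D(y/z)} = K(y)^{H\cap D(y/x)}$, and the square relating $D(y/x)\to D(z/x)$ with ${\rm Gal}(K(y)/K(x))\to{\rm Gal}(K(z)/K(x))$ commutes because both horizontal isomorphisms are induced by the same residue-field reductions, which are compatible with the tower $\OOO_{X,x}\subseteq\OOO_{Z,z}\subseteq\OOO_{Y,y}$.

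The main obstacle I anticipate is being careful about the passage to henselizations (or completions): one needs that $\OOO_{Y,y}$ is unramified over $\OOO_{X,x}$ \emph{as a local homomorphism} — which is exactly the hypothesis — and that after henselizing the fiber $\pi^{-1}(x)$ still has the same number of points with the same residue fields, so that the counting argument $|D(y/x)| = [K(y):K(x)]$ goes through; this uses that $Y\to X$ is finite (part of being a cover) so that $\OOO_{Y,y}^{\rm h}$ is a finite product-free (local) $\OOO_{X,x}^{\rm h}$-algebra, together with \'etaleness to conclude it is a field extension of residue fields of degree equal to the local degree. Once this local picture is set up cleanly, parts~(b)--(d) are essentially bookkeeping with $\Gamma$-sets, and the only remaining care is to check that the vertical maps in the diagram of~(d) are genuinely the restriction maps and that each square commutes on the nose rather than up to an inner automorphism — which holds because we have fixed the base point $y$ throughout and all reductions are taken with respect to that fixed tower of local rings.
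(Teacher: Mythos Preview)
Your argument is correct in outline and arrives at the same conclusions, but the route differs from the paper's. The paper does not pass to henselizations at all: it shrinks to an affine open $X'\subseteq X\setminus{\rm Branch}(\pi)$ containing $x$, writes $Y'=\pi^{-1}(X')=\Spec(R)$ with $\Gamma$ acting on $R$ and $R^\Gamma$ the coordinate ring of $X'$, and then simply cites Bourbaki's \emph{Alg\`ebre commutative}, Ch.~V: Th\'eor\`eme~2(ii) for the Galois-ness of $K(y)/K(x)$ and surjectivity of $D(y/x)\to\Gal(K(y)/K(x))$, together with SGA1 Exp.~X, Cor.~2.4 for the vanishing of inertia in the \'etale case, which gives~(a); and Propositions~4(i) and~7(i) of the same chapter for~(b) and~(c),(d). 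So the paper's proof is essentially a sequence of citations to the classical commutative-algebra theory of decomposition and inertia groups, whereas you rebuild that theory locally via henselization and an orbit-counting argument.

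Both approaches are valid. Your way is more self-contained and makes the mechanism visible (transitivity of $\Gamma$ on the fiber, counting $|D|=[K(y):K(x)]$, $H$-orbit description of $\rho^{-1}(z)$), at the cost of having to set up the henselian local picture carefully. One technical point to tighten: your Nakayama justification for injectivity in~(a) is not quite complete as stated --- from $(\sigma-\mathrm{id})(\OOO_{Y,y}^{\rm h})\subseteq\mfm_x\OOO_{Y,y}^{\rm h}$ one cannot immediately conclude $\sigma=\mathrm{id}$ by Nakayama alone. The clean argument is that a finite \'etale algebra over a henselian local ring is determined by its residue extension (equivalently, write $\OOO_{Y,y}^{\rm h}=\OOO_{X,x}^{\rm h}[t]/(f)$ and use Hensel's lemma to see that a root of $f$ is determined by its residue), which is exactly what SGA1 Exp.~V, Cor.~2.4 encodes. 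With that fix your argument goes through; the paper's citation-based proof is shorter but less transparent.
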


\begin{proof} 
Let $B={\rm Branch}(\pi)$ and let $X'\subseteq X\setminus B$ be an open affine neighbourhood of $x$. 
Then 
$Y'=\pi^{-1}(X')$ is open and affine,
so $Y'=\Spec(R)$ for some normal integral $K$-algebra $R$,
and
$\pi|_{Y'}\colon Y'\to X'$ is \'etale by Lemma \ref{lem:unramet}(a).
The Galois group $\Gamma:=\Gal(Y/X)$ acts on $R$, and  $X'=\Spec(R^\Gamma)$ by Remark \ref{rem:GT}. 
By \cite[Ch. V Thm.\ 2(ii)]{Bou}, the extension $K(y)/K(x)$ is Galois and the canonical homomorphism $\Gamma\to\Gal(K(y)/K(x))$ is surjective. 
Its kernel (the inertia group) is trivial by \cite[Exp V, Cor. 2.4]{SGA1} because $\pi$ is \'etale over $X'$. Thus (a) holds true.
Part (b) follows from \cite[Ch. V Prop.\ 4(i)]{Bou},
and (c), (d) from \cite[Ch. V Prop.\ 7(i)]{Bou}.
\end{proof}

\begin{definition}
Let $X$ be a geometrically integral normal $K$-variety and $Y$ a normal $K$-variety. A cover $\pi\colon Y\rightarrow X$ is 
{\em fully ramified}
if every 
geometrically integral subcover $Z\rightarrow X$ of $\pi$
with $\deg(Z/X)\ge 2$ is ramified.
\end{definition}

\begin{remark}\label{rem:fullyram}
Note that if $Y$ is geometrically integral itself, then this simply amounts to saying that all nontrivial subcovers of $\pi$ are ramified.
If in addition $X$ is an abelian variety,
this coincides with what is called a (PB) cover in \cite{CZ,CDJLZ}.
Clearly every subcover of a fully ramified cover
is again fully ramified.
\end{remark}

{
\begin{lemma}\label{lem:fiber_products}
For $i=1,2$ let $\pi_i\colon Y_i\rightarrow X$ be
a cover of $K$-varieties. Write $Z=Y_1\times_XY_2$ and $\pi_1'\colon Z\rightarrow Y_2$, $\pi_2'\colon Z\rightarrow Y_1$.
Say that $(*)$ holds if $K(Y_1)$, $K(Y_2)$
are linearly disjoint over $K(X)$.
$$
 \xymatrix{
  Y_2\ar[d]_{\pi_2} & Z\ar[l]_{\pi_1'}\ar[d]^{\pi_2'} \\
  X & Y_1\ar[l]^{\pi_1}
 }
$$
\begin{enumerate}[(a)]
    \item If $\pi_1$ is \'etale, then $Z$ is normal and $\pi_1'$ is \'etale.
    \item If $\pi_1$ is \'etale and $(*)$ holds, then
    $\pi_2'$ is a cover and
    $\pi_1'$ is an \'etale cover.
    \item If $\pi_1$ and $\pi_2$ are \'etale and $(*)$ holds,
     then $\pi_2\circ\pi_1'=\pi_1\circ\pi_2'$ is an \'etale cover.
    \item If $\pi_1$ is \'etale and $\pi_2$ is 
    geometrically integral, fully ramified and Galois,
     then $(*)$ holds.
    \item 
    If $\pi_1$ is \'etale
    and $\pi_2$ is geometrically integral, fully ramified and Galois,
    then $\pi_2'$
    is a fully ramified geometrically integral Galois cover of $K'$-varieties,
    where $K'$ is the relative algebraic closure of $K$ in $K(Y_1)$.
    
\end{enumerate}
\end{lemma}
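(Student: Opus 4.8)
The plan is to deduce (f) from parts (a)--(e) in two stages: first base change the whole situation to $K'$, which reduces everything to the case where the base field is $K$ itself and $Y_1$ is geometrically integral over $K$; then verify one at a time that, in this reduced situation, $\pi_2'$ is geometrically integral, Galois, and fully ramified.

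\emph{Reduction to $K'=K$.} Base changing the diagram along $\Spec(K')\to\Spec(K)$, the morphism $(\pi_1)_{K'}$ is again étale, one has $Z_{K'}=(Y_1)_{K'}\times_{X_{K'}}(Y_2)_{K'}$ with $(\pi_2')_{K'}$ the corresponding second projection, and $(Y_1)_{K'}$ is geometrically integral over $K'$ by hypothesis. So it suffices to know that $(\pi_2)_{K'}$ is again geometrically integral, fully ramified and Galois, and then to prove the assertion in the special case "$K=K'$, $Y_1$ geometrically integral over $K$". Geometric integrality is standard, and $(\pi_2)_{K'}$ is Galois with unchanged group $\Gamma:=\Gal(Y_2/X)$ because $K(X)/K$ and $K(Y_2)/K$ are regular (Remark~\ref{rem:varieties}). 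For \emph{fully ramified} I would use that a subfield of the regular extension $K(Y_2)/K$ is again regular: hence by Remarks~\ref{rem:normalization} and~\ref{rem:GT} the geometrically integral subcovers of $(\pi_2)_{K'}$ of degree $\ge 2$ are exactly the $(Y_2/H)_{K'}$ with $H\lneq\Gamma$, and each is ramified since $Y_2/H\to X$ is a geometrically integral subcover of the fully ramified $\pi_2$ of degree $\ge 2$ and the branch locus is compatible with base change (Lemma~\ref{lem:ramparts}(b)). The same argument applies with $\bar K$ in place of $K'$, a fact used below.

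\emph{Geometric integrality and Galois.} Assume now $K=K'$ and $Y_1$ geometrically integral over $K$. Part (e) gives $(*)$, hence by (a)--(c) the scheme $Z$ is normal and integral (a normal $K$-variety), $\pi_2'$ is a cover and $\pi_1'$ an étale cover. By $(*)$ we have $K(Z)=K(Y_1)\otimes_{K(X)}K(Y_2)=K(Y_1)\cdot K(Y_2)$ as a field and $K(Y_1)\cap K(Y_2)=K(X)$; since $K(Y_2)/K(X)$ is Galois, $K(Z)/K(Y_1)$ is Galois and restriction is an isomorphism $\Gal(Z/Y_1)\xrightarrow{\ \sim\ }\Gamma$, so $\pi_2'$ is Galois. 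For geometric integrality of $Z$ I would apply (e) over $\bar K$ --- legitimate since, by the previous paragraph, $(\pi_1)_{\bar K}$ is étale and $(\pi_2)_{\bar K}$ is geometrically integral, fully ramified and Galois --- to get $(*)$ over $\bar K$, and then (b) over $\bar K$ shows $Z_{\bar K}=(Y_1)_{\bar K}\times_{X_{\bar K}}(Y_2)_{\bar K}$ is integral, i.e.\ $Z$ is geometrically integral over $K$.

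\emph{Fully ramified.} Let $W\xrightarrow{\,q\,}Y_1$ be a subcover of $\pi_2'$ with $\deg(q)\ge 2$; it is automatically geometrically integral, as $K(Y_1)\subseteq K(W)\subseteq K(Z)$ with $K(Z)/K$ regular. By Remark~\ref{rem:GT}, $W=Z/H$ for some $H\le\Gamma$, and a short degree count using $(*)$ gives $K(W)=K(Z)^H=K(Y_1)\otimes_{K(X)}K(Y_2)^H$; hence $W\cong Y_1\times_X(Y_2/H)$ (both are normal and finite over $Y_1$ with the same function field, normality and integrality of the right-hand side following from (a),(b) applied to the pair $(\pi_1,\ Y_2/H\to X)$, for which $(*)$ holds since it holds for $(\pi_1,\pi_2)$), and under this identification $q$ is the base change of $g\colon Y_2/H\to X$ along $\pi_1$. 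Since $\deg(q)=[\Gamma:H]=\deg(Y_2/H\to X)\ge 2$, the cover $g$ is a geometrically integral subcover of $\pi_2$ of degree $\ge 2$, hence ramified: $\Ram(g)\ne\emptyset$. As $q$ is a pullback of $g$ along the flat morphism $\pi_1$, one has $\Omega^1_{W/Y_1}=p^*\Omega^1_{(Y_2/H)/X}$, where $p\colon W\to Y_2/H$ is the other projection, a flat and surjective morphism. Taking supports --- and using that $p$ is faithfully flat on local rings, so that the support of $\Omega^1_{W/Y_1}$ is the preimage of that of $\Omega^1_{(Y_2/H)/X}$ (see the proof of Lemma~\ref{lem:Branch} for the identification $\Ram=\mathrm{Supp}\,\Omega^1$) --- yields $\Ram(q)=p^{-1}(\Ram(g))\ne\emptyset$. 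Thus $q$ is ramified, so $\pi_2'$ is fully ramified, completing the proof.

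\emph{Main obstacle.} The geometric content is entirely in (a)--(e); the work here is bookkeeping, and the point needing genuine care is that \emph{fully ramified is stable under base change} --- both along $K\to K'$ (and $K\to\bar K$) and along the flat morphism $\pi_1$. I would isolate once and for all the two facts this rests on: that the subcover/subgroup dictionary for Galois covers survives base change, because subfields of regular field extensions stay regular; and that ramification loci are compatible with flat base change, because a flat local homomorphism of local rings is faithfully flat, so that $\Omega^1$ and its support pull back correctly.
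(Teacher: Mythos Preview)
Your reduction step does not work. The hypothesis is that $Y_1$ is geometrically integral \emph{as a $K'$-variety}, which in particular forces $K'$ to embed in $K(Y_1)$; consequently $Y_1$ is \emph{not} geometrically integral as a $K$-variety when $K'\ne K$, and the base change $(Y_1)_{K'}=Y_1\times_{\Spec K}\Spec K'$ is not integral (its ring of rational functions is $K(Y_1)\otimes_K K'$, which has zero-divisors). So after your base change you are no longer in the setting of (a)--(e), which require the $Y_i$ to be normal \emph{integral} $K$-varieties, and your claim ``$(Y_1)_{K'}$ is geometrically integral over $K'$ by hypothesis'' confuses $Y_1$ viewed as a $K'$-variety with the $K$-to-$K'$ base change of $Y_1$.

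The remainder of your argument---for the special case $K'=K$ with $Y_1$ geometrically integral over $K$---is correct and parallels the paper. The paper avoids any reduction and works over $K$ throughout: to show $Z$ is geometrically integral \emph{over $K'$}, it takes a finite extension $K''/K'$, observes that the composite $(Y_1\times_{\Spec K'}\Spec K'')\to Y_1\to X$ is an \'etale cover of $K$-varieties (here geometric integrality of $Y_1$ over $K'$ guarantees that $Y_1\times_{\Spec K'}\Spec K''$ is again a variety), and then applies (e) and (b) over $K$ to conclude that $(Y_1\times_{\Spec K'}\Spec K'')\times_X Y_2=Z\times_{\Spec K'}\Spec K''$ is integral. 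For ``fully ramified'' the paper argues essentially as you do, deducing ramification of $W'\to Y_1$ from that of $Y_2/H\to X$ via Lemma~\ref{lem:ramparts}(a) rather than via pullback of $\Omega^1$; both routes are fine. Your argument can be repaired simply by dropping the reduction and running your ``geometric integrality'' paragraph with $K''/K'$ finite in place of $\bar K/K$, as the paper does.
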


\begin{proof}
(a) If $\pi_1$ is \'etale then so is $\pi_1'$, hence $Y_2$ normal implies $Z$ normal.


(b) 
Since $\pi_1,\pi_2$ are finite and surjective, so are $\pi_1',\pi_2'$.
 By (a), $Z$ is normal and $\pi_1'$ \'etale.
 Since the generic fibre of $\pi_i$ is $\Spec(K(Y_i))$, the generic fibre of $\pi_1'$ and of $\pi_2\circ \pi_1'$ is $\Spec(K(Y_1)\otimes_{K(X)} K(Y_2))$, which is irreducible by $(*)$. 
 As $\pi_1'$ is flat, this 
 implies that $Z$ is irreducible (\cite[2.3.5(iii)]{EGAIV2}).


(c) follows from (b) since the composition of \'etale covers is an \'etale cover.

(d) Let $F=K(Y_1)\cap K(Y_2)$. Then $X^{(F)}\rightarrow X$ is a subcover of $\pi_2$,
hence geometrically integral and fully ramified (Remark \ref{rem:fullyram}), 
and a subcover of $\pi_1$,
hence \'etale
(Lemma \ref{lem:unramet}(a,d)).
Thus ${\rm deg}(X^{(F)}/X)=1$,
i.e.~$F=K(X)$. 
Since $K(Y_2)/K(X)$ is Galois,
this already implies $(*)$.

(e) 
Since $Y_1$ is normal, we have a natural $K$-morphism $Y_1\to \Spec(K')$.
We write $\tilde{Y}_1$ and $\tilde{Z}$ when we view $Y_1$ and $Z$ as $K'$-varieties this way,
and we note that $\tilde{Y}_1$ is geometrically integral (Remark \ref{rem:varieties}).
By (d,b), $\pi_2'$ is a Galois cover
with $\Gamma:={\rm Gal}(Z/Y_1)={\rm Gal}(Y_2/X)$.
For a finite extension $K''/K'$,
the composition $(\tilde{Y}_1)_{K''}\rightarrow Y_1\rightarrow X$
is an \'etale cover of $K$-varieties,
hence (d,b) imply that $(\tilde{Y}_1)_{K''}\times_XY_2=\tilde{Z}_{K''}$ is integral.
Thus $\tilde{Z}$ is geometrically integral.
In view of Remark~\ref{rem:GT} it remains to prove that 
for every proper subgroup $H$ of $\Gamma$ the cover $W':=Y_1^{(K(Z)^H)}\to Y_1$ is ramified.
The cover $W:=X^{(K(Y_2)^H)}\rightarrow X$
is a nontrivial subcover of $\pi_2$, hence fully ramified and ramified,
and since $K(Y_1)$, $K(Y_2)$ are linearly disjoint over $K(X)$, so are $K(Y_1)$, $K(W)$.
Thus $W'=W\times_XY_1\rightarrow W$ is a cover by (b),
which implies that
$W'\rightarrow Y_1$ must be ramified
(Lemma~\ref{lem:ramparts}(d)).
\end{proof}
}

As usual, an abelian variety $A$ over $K$
is a 
proper geometrically integral group scheme over $K$.
We denote by $[n]=[n]_A\colon A\rightarrow A$ the endomorphism $x\mapsto nx$,
and by $A[n]$ its kernel.

\begin{lemma}\label{lem:av_dual_isog}
Let $\alpha\colon A\rightarrow B$ be an isogeny of abelian varieties over $K$ of degree $n$.
There exists an isogeny $\alpha'\colon B\rightarrow A$
with $\alpha'\circ\alpha=[n]_A$ 
and $\alpha\circ\alpha'=[n]_B$. 
\end{lemma}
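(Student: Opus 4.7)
The plan is to construct $\alpha'$ by factoring the multiplication-by-$n$ map $[n]_A$ through $\alpha$. First I would observe that, since $\alpha$ is an isogeny of degree $n$, its scheme-theoretic kernel $\ker(\alpha)$ is a finite $K$-group scheme of order $n$. In characteristic zero this group scheme is étale, so its $\bar{K}$-points form a group of order $n$ and are in particular annihilated by $n$ by Lagrange. Hence $\ker(\alpha)\subseteq A[n]=\ker([n]_A)$.

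Using that $\alpha$ presents $B$ as the quotient $A/\ker(\alpha)$, the universal property of quotient abelian varieties then yields a unique morphism $\alpha'\colon B\to A$ satisfying $\alpha'\circ\alpha=[n]_A$. To obtain the symmetric relation I would exploit that $\alpha$ is a group homomorphism and compute
$$ \alpha\circ\alpha'\circ\alpha \;=\; \alpha\circ[n]_A \;=\; [n]_B\circ\alpha. $$
Since $\alpha$ is faithfully flat and surjective it is in particular an epimorphism of schemes, so this identity forces $\alpha\circ\alpha'=[n]_B$.

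Finally I would check that $\alpha'$ is itself an isogeny: it is surjective because $\alpha'\circ\alpha=[n]_A$ is, and its kernel is finite because $\alpha^{-1}(\ker\alpha')\subseteq\ker([n]_A)=A[n]$ is finite while $\alpha$ is finite; this also yields $\dim B=\dim A$. The argument is entirely classical and I do not expect any genuine obstacle; the only point that requires minor care is the initial observation that $\ker(\alpha)$ is $n$-torsion, which in characteristic zero follows from étaleness of finite group schemes together with Lagrange's theorem.
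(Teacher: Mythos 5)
Your proposal is correct and follows essentially the same route as the paper: show $\ker(\alpha)$ is killed by $n$ (here automatic in characteristic zero via étaleness), factor $[n]_A$ through the quotient $B=A/\ker(\alpha)$ to get $\alpha'$ with $\alpha'\circ\alpha=[n]_A$, and then cancel $\alpha$ in $\alpha\circ\alpha'\circ\alpha=[n]_B\circ\alpha$ to get $\alpha\circ\alpha'=[n]_B$. The only cosmetic difference is the justification of the cancellation step (you invoke faithful flatness of $\alpha$, the paper uses surjectivity together with $A$, $B$ being separated and reduced), and your explicit check that $\alpha'$ is an isogeny, both of which are fine.
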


\begin{proof} See for example \cite[Prop. 5.12]{vdGM}.
\end{proof}

\newcommand{\Sch}{(\mathscr{S}/K)}

For an abelian variety $A$ over $K$ and a point $t\in A(K)$
we denote by
$$
 \tau_t\colon\begin{cases} A&\longrightarrow\; A\\x&\longmapsto\; x+t\end{cases}
$$
the translation by $t$,
which is an automorphism of the variety $A$. Let $\Sch$ be the category of locally noetherian $K$-schemes. 

\newcommand{\Stab}{\mathrm{Stab}}
\newcommand{\uStab}{\underline{\mathrm{Stab}}}
\newcommand{\Hilb}{\mathrm{Hilb}}
\newcommand{\uHilb}{\underline{\mathrm{Hilb}}}
\newcommand{\oK}{\bar{K}}

\begin{lemma}\label{lem_stab} 
Let $A$ be an abelian variety over $K$ and $X$ a nonempty closed subscheme of $A$. 
For a $K$-scheme $T$ we denote $X_T=X\times_{{\rm Spec}(K)}T$.
The functor
$$
\uStab_A(X):\begin{cases} \Sch^{\rm opp}&\longrightarrow\; \mathrm{(groups)},\\ 
T&\longmapsto\; \{a\in A(T): a+X_T=X_T\}\end{cases}
$$
is represented by a smooth closed subgroup scheme $\Stab_A(X)$ of $A$.
\end{lemma}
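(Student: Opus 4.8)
\emph{Plan.} Since $\mathrm{char}(K)=0$, every group scheme locally of finite type over $K$ is automatically smooth by Cartier's theorem, so the smoothness assertion will be free once representability by a closed subgroup scheme of $A$ is established. Moreover $\uStab_A(X)$ is visibly a subgroup functor of the functor of points of $A$: if $a,b\in A(T)$ satisfy $a+X_T=X_T=b+X_T$, then $(a-b)+X_T=X_T$, and the identity section lies in $\uStab_A(X)$ since $X$ is nonempty. Hence the only real content is that $\uStab_A(X)$ is represented by a \emph{closed subscheme} of $A$; the group-scheme structure is then inherited from $A$.

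The first step is to construct the ``universal translate'' of $X$. The map $\psi\colon A\times A\to A\times A$, $(a,y)\mapsto(a,a+y)$, is an automorphism over the first factor $A$, so it carries the closed subscheme $A\times X$ (the constant family over the first $A$) to a closed subscheme $\mathcal{W}\subseteq A\times A$, and restricts to an isomorphism $A\times X\xrightarrow{\ \sim\ }\mathcal{W}$ over $A$. Consequently $\mathrm{pr}_1\colon\mathcal{W}\to A$ is proper and flat with all fibres isomorphic to $X$. The key point to verify is that for every $K$-scheme $T$ and every $a\in A(T)$, the base change $\mathcal{W}\times_{A,a}T$, viewed inside $(A\times A)\times_{A}T\cong A_T$ via the second projection, is exactly $a+X_T=\tau_a(X_T)$ as a closed subscheme: indeed $\psi$ base-changes to the $T$-automorphism $\tau_a$ of $A_T$, which takes the constant family $X_T$ to $a+X_T$. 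Writing $\mathcal{V}:=A\times X$ for the constant family, one then has $\uStab_A(X)(T)=\{\,a\in A(T):\mathcal{W}\times_{A,a}T=\mathcal{V}\times_{A,a}T\text{ inside }A_T\,\}$.

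The second step invokes the Hilbert scheme. As an abelian variety, $A$ is projective over $K$, so the Hilbert functor of $A/K$ is represented by a $K$-scheme $\mathrm{Hilb}(A/K)$ that is a disjoint union of projective (in particular separated) $K$-schemes; and $X$, being flat and proper over $K$, defines a point of $\mathrm{Hilb}(A/K)(K)$. The two proper flat families $\mathcal{W}$ and $\mathcal{V}$ over $A$ classify two morphisms $g_{\mathcal{W}},g_{\mathcal{V}}\colon A\to\mathrm{Hilb}(A/K)$, and by the universal property of the Hilbert scheme together with the reformulation above, $\uStab_A(X)$ is precisely the equalizer of $g_{\mathcal{W}}$ and $g_{\mathcal{V}}$, that is, the preimage of the diagonal of $\mathrm{Hilb}(A/K)\times_K\mathrm{Hilb}(A/K)$ under $(g_{\mathcal{W}},g_{\mathcal{V}})$. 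Since $\mathrm{Hilb}(A/K)$ is separated, this diagonal is a closed immersion, hence $\uStab_A(X)$ is a closed subscheme of $A$, which completes the proof.

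The step I expect to require the most care is the construction in the first step, specifically checking that the base change of $\mathcal{W}$ along an arbitrary $a\colon T\to A$ recovers $a+X_T$ \emph{scheme-theoretically} (not merely set-theoretically), since it is exactly this identification of closed subschemes that feeds into the universal property of $\mathrm{Hilb}$. If one prefers to avoid the Hilbert scheme, an alternative for the second step is to show directly that the containment condition ``$\mathcal{W}\times_A T\subseteq\mathcal{V}\times_A T$'' is closed in the base by exhibiting its vanishing locus as a coherent-sheaf condition along the proper flat morphism $\mathcal{W}\to A$, and then to note that, $\mathcal{W}$ and $\mathcal{V}$ being flat over $A$ with the same Hilbert polynomial in every fibre, containment of families already forces equality; but the Hilbert-scheme route is cleaner.
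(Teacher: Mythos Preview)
Your proof is correct and follows essentially the same route as the paper: both reduce to the Hilbert scheme of $A$, the paper phrasing it as the scheme-theoretic fibre of the orbit map $A\to\mathrm{Hilb}_A$, $a\mapsto a\ast X$, over the point $[X]$, while you phrase the same thing as the equalizer of $g_{\mathcal W}$ (the orbit map) and $g_{\mathcal V}$ (the constant map at $[X]$). Your explicit construction of the universal translate $\mathcal W$ unpacks what the paper compresses into the phrase ``the group law induces an action of $A$ on $\mathrm{Hilb}_A$''.
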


\begin{proof} 
For an action $*\colon A\times H\to H$ of $A$ on a $K$-scheme $H$ and a $K$-rational point $h\in H(K)$ we define, following 
\cite[\S7 c, page 142]{Mil17}, $\mathrm{Stab}_A(h)$ to be the scheme
theoretic fiber $t_h^{-1}(h)$ of the orbit map 
$t_h\colon A\to H,\ a\mapsto a*h$. Then $\mathrm{Stab}_A(h)$ is a closed subscheme of $A$. From
the universal property of the fiber product we see that it represents
the group valued functor 
$\uStab_A(h)\colon T\mapsto \{a\in A(T): a*h=h\}$
on the category $\Sch^{\rm opp}$,
hence $\Stab_A(h)$ is a subgroup scheme of $A$. 
It is smooth because $\mathrm{char}(K)=0$. 

Since $A$ is projective,
by \cite[\S3 Thm.\ 3.2 and p.~265]{FGA} (see also \cite[Ch.~5]{FGA2}), the functor\footnote{In detail it is a functor in the following way: For $g\colon T'\to T$ and 
$X\in \uHilb_A(T)$ we have $\uHilb_A(g)(X)=X_{T'}=X\times_T T'$.} 
$$
\uHilb_A:\begin{cases} \Sch^{\rm opp}&\longrightarrow\; \mathrm{(sets)},\\ 
T&\longmapsto\; \{\mbox{closed subschemes of $A_T$ flat over $T$} \},\end{cases}
$$
is represented by a $K$-scheme $\Hilb_A$
which is a disjoint union
of projective $K$-schemes.
The group law $A\times A\to A$ induces an action of $A$ on ${\rm Hilb}_A$, and we view $X$ as  a $K$-rational point:  $X\in {\rm Hilb}_A(K)$. Now ${\rm Stab}_A(X)$ is a smooth closed subgroup scheme of $A$ representing $\uStab_A(X)$.
\end{proof}

\begin{remark} 
In the above situation, if $X$ is in addition reduced, then
        $$
        \Stab_A(X)(\oK)=\{a\in  A(\oK): a+X_{\oK}=X_{\oK}\}
        =\{a\in  A(\oK): a+X(\oK)=X(\oK)\}
        $$
    because $X_{\oK}$ and $a+X_{\oK}$ are reduced closed subschemes
    of $A_{\oK}$.
If moreover $\Stab_A(X)=A$, then $X=A$, since for any fixed $x\in X(\oK)$ we get 
 $A(\bar{K})=A(\bar{K})+x\subseteq X(\bar{K})$.
\end{remark}

\section{Strategy of proof}
\label{sec:sketch}

\noindent
In this section we give a very rough sketch
of the common ideas underlying the proofs of
Theorem \ref{thm:intro_abelian}
in Section \ref{sec:abelian}
and 
Theorem \ref{thm:intro_E_tor}
in Section \ref{sec:tor}.
However, while we hope that this sketch will be helpful to the reader,
the following two sections will use only the numbered definitions and lemmas.

In both proofs, we will first apply
the following technical reduction to fully ramified geometrically integral Galois covers
(where $L$ is the abelian extension from Theorem \ref{thm:intro_abelian}
respectively $L=\mathbb{Q}(A_{\rm tor})$):

\begin{lemma}\label{lem:red_to_irred}
Let 
$K_0$ be a field of characteristic zero,
$A$ be an abelian variety over $K_0$ 
and $L/K_0$ a Galois extension.
Assume that for every finite subextension $K/K_0$ of $L/K_0$,
every nonempty open $U\subseteq A_K$,
every finite Galois extension $K'/K$
linearly disjoint from $L/K$,
every abelian variety $A'$ over $K'$
with an isogeny $\alpha\colon A'\rightarrow A_{K'}$
and every finite collection $(\rho_i\colon Z_i\rightarrow A')_{i=1}^m$
of fully ramified geometrically integral Galois covers
there exists
a finite Galois extension $K''/K$
containing $K'$, and
$$
 x'\in A'(K''L)\cap\alpha^{-1}(U(L))
$$
with
$(\rho_i)_{K''L}^{-1}(x')$ irreducible for every $i$.
Then $A_L$ has WHP.
$$
 \xymatrix{
      & L\ar@{-}[d]\ar@{-}[r] & K'L\ar@{-}[r]\ar@{-}[d]    & K''L\ar@{-}[d] \\
  K_0\ar@{-}[r] & K\ar@{-}[r] & K'\ar@{-}[r] & K'' 
 }
$$
\end{lemma}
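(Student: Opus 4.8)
The plan is to pass from each given ramified cover to its Galois closure, split off the maximal unramified subcover, and then annihilate the remaining unramified (``geometric $\pi_1$'') part by a single multiplication isogeny; the constant field extension this forces can be made linearly disjoint from $L$ precisely because $L/K_0$ is Galois, leaving fully ramified geometrically integral Galois covers of an abelian variety isogenous to $A$, to which the hypothesis applies. In detail: given ramified covers $(\pi_j\colon Y_j\to A_L)_{j=1}^n$ with $Y_j$ normal (where $A:=A_{K_0}$), they descend to a finite subextension of $L/K_0$, and since $\Spec(L)\to\Spec(K_0)$, hence $A_L\to A_{K_0}$, is integral, every proper closed subset of $A_L$ is contained in one defined over a finite subextension of $L/K_0$; so WHP for $A_L$ follows once one shows that for every finite subextension $K/K_0$ over which the $\pi_j$ are defined and every nonempty open $U\subseteq A_K$ there is $x\in U(L)$ avoiding $\bigcup_j\pi_j(Y_j(L))$. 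I would then replace each $\pi_j$ by its Galois closure $\hat\pi_j\colon\hat Y_j\to A_K$ (still ramified by Lemma~\ref{lem:ramparts}, since it dominates $\pi_j$), write $Y_j=\hat Y_j/H_j$ with $H_j\subsetneq\hat\Gamma_j:=\Gal(\hat Y_j/A_K)$, and shrink $U$ to avoid the proper closed sets $\mathrm{Branch}(\hat\pi_j)$ (Lemma~\ref{lem:Branch}); since an $L$-point of $(\pi_j)_L^{-1}(x)$ survives base change, it suffices to find, for some field $K''\supseteq K$, a point $x\in U(L)$ such that $(\pi_j)_{K''L}^{-1}(x)$ has no $K''L$-rational point for every $j$.

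Next I would split off the unramified part: let $N_j$ be the normal subgroup of $\hat\Gamma_j$ generated by the inertia subgroups of $\hat\pi_j$. Since $A_K$ is regular, Zariski--Nagata purity together with Lemma~\ref{lem:unramet} shows that $\hat Y_j/N_j\to A_K$ is étale and is the maximal unramified subcover of $\hat\pi_j$, while $\hat Y_j\to\hat Y_j/N_j$ has the same ramification (Lemma~\ref{lem:ramparts}), hence is fully ramified with group $N_j$, and $N_j\neq 1$ (as $\hat\pi_j$ is ramified), $N_j\not\subseteq H_j$ (else $K(Y_j)$ would lie in the constant field extension $K(\hat Y_j)^{N_j}$ of $K(A_K)$ and $\pi_j$ would be étale). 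As $\hat Y_j/N_j\to A_K$ is étale, for a sufficiently divisible $m$ the pullback $[m]_{A_K}^{*}(\hat Y_j/N_j)$ is totally split over $\bar K$ — over $\bar K$ each component is an isogeny onto $A_{\bar K}$, and Lemma~\ref{lem:av_dual_isog} shows pulling an isogeny of degree dividing $m$ back by $[m]$ produces a section, hence a trivial cover — so it is totally split over a finite extension $K_6/K$, which I enlarge to be Galois over $K$. I then \emph{replace $K$ by the finite subextension $K_6\cap L$ of $L/K_0$} and base change everything; now $K':=K_6$ satisfies $K'\cap L=K$ with $K'/K$ Galois, hence is linearly disjoint from $L/K$. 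Over $K'$ the constant fields of the $\hat Y_j$ lie in $K'$ and $[m]^{*}(\hat Y_j/N_j)$ splits completely, so by Lemma~\ref{lem:fiber_products}(f) — applied to a split component of this étale part (isomorphic to $A':=A_{K'}$) and to the fully ramified geometrically integral Galois cover $\hat Y_j\to\hat Y_j/N_j$ — every geometrically integral $K'$-component $Z\to A'$ of $[m]^{*}\hat Y_j$ is a fully ramified geometrically integral Galois cover of $K'$-varieties with group $N_j$, and $\hat\Gamma_j$ permutes these $[\hat\Gamma_j:N_j]$ components with each stabiliser equal to $N_j$. Feeding $K$, $U$, $K'$, $A'$, the isogeny $\alpha:=[m]_{A_{K'}}\colon A'\to A_{K'}$, and the collection of all these covers $\rho_i:=(Z\to A')$ into the hypothesis yields a finite Galois $K''/K$ containing $K'$ and a point $x'\in A'(K''L)\cap\alpha^{-1}(U(L))$ with every $(\rho_i)_{K''L}^{-1}(x')$ irreducible.

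Finally, setting $x:=\alpha(x')\in U(L)$ finishes the argument: pulling back along $\alpha$ and using $[m]^{*}Y_j=([m]^{*}\hat Y_j)/H_j$, the fibre $(\pi_j)_{K''L}^{-1}(x)$ is identified with a disjoint union, over the $H_j$-orbits of components of $[m]^{*}\hat Y_j$, of schemes $\Spec(\kappa^{H_j\cap N_j})$, where $\kappa$ is the residue field of the irreducible fibre of the corresponding $\rho_i$ over $x'$; since $x$ avoids $\mathrm{Branch}(\hat\pi_j)$, so does $x'$ avoid the branch locus of $\rho_i$, this fibre is étale, and the cover being Galois with group $N_j$ forces $\kappa/K''L$ to be Galois with group $N_j$, whence $\kappa^{H_j\cap N_j}\neq K''L$ because $N_j\not\subseteq H_j$. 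Thus $(\pi_j)_{K''L}^{-1}(x)$ has no $K''L$-rational point for any $j$, as required.

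The main difficulty is the construction of the isogeny $\alpha$ and the field $K'$: one must arrange \emph{simultaneously} that a single multiplication isogeny kills the unramified parts of all the covers, that the constant field extensions this creates are split, and that the auxiliary Galois extension $K'$ remains linearly disjoint from $L$. The last point is exactly where the hypothesis that $L/K_0$ is Galois enters, through the harmless absorption of $K_6\cap L$ into the base field. A subsidiary nuisance is keeping track, after base change to $K'$, of the action of $\hat\Gamma_j$ on the connected components of $[m]^{*}\hat Y_j$ precisely enough to conclude that each component has stabiliser exactly $N_j$ — which is what makes the final fibre computation go through.
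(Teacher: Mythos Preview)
Your approach is correct and differs from the paper's in a genuine way. The paper takes one Galois closure $Z\to A_K$ containing all the $K(Y_i)$, lets $A'\to A_K$ be its maximal \'etale subcover, uses Lang--Serre to recognise $A'$ as an abelian variety with an isogeny $\alpha\colon A'\to A_{K'}$, applies the hypothesis to the \emph{translated} covers $\rho_\lambda=\tau_\lambda\circ\rho$ for $\lambda\in\ker\alpha$, and closes with a decomposition-group argument via Lemma~\ref{lem:decomposition_group}. You instead trivialise the \'etale parts by pulling back along a single $[m]$, take $A'=A_{K'}$ with $\alpha=[m]$, feed in the \emph{components} of $[m]^*\hat Y_j$ as the $\rho_i$, and finish with a direct fibre computation rather than decomposition groups. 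Your route avoids Lang--Serre and the translate trick, at the price of heavier component bookkeeping; the paper's route keeps that bookkeeping light by working upstairs on $A'$.

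Two small points deserve more care. First, your invocation of Lemma~\ref{lem:fiber_products}(f) strictly requires the base $\hat Y_j/N_j$ to be geometrically integral, which it need not be over $K$; you should first pass to $K'$-components $\hat Y_j^0$ and $W_j^0$ of $\hat Y_j$ and $W_j=\hat Y_j/N_j$ (having absorbed the constant fields into $K'$) and check that $\hat Y_j^0\to W_j^0$ is still Galois with group exactly $N_j$ and fully ramified---this uses that the constant field of $\hat Y_j$ already lies in $K(W_j)$ (any constant extension being \'etale), so that $N_j$ is contained in the stabiliser of $\hat Y_j^0$. Second, ``the constant field extension $K(\hat Y_j)^{N_j}$'' is a slip: that field is $K(W_j)$, the maximal \emph{unramified} (not constant) subextension---though your conclusion $N_j\not\subseteq H_j$ is of course unaffected.
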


\begin{proof}
Let $(\pi_i\colon Y_i\rightarrow A_L)_{i=1}^n$
be a finite collection of ramified covers,
and let $U\subseteq A_L$ be a nonempty open subset.
We have to show that there exists $x\in U(L)\setminus\bigcup_{i=1}^n\pi_i(Y_i(L))$.
We can choose a finite subextension $K/K_0$ of $L/K_0$ such that $U$ and each $\pi_i$ descends to $K$,
so for notational simplicity let us assume that instead
we are given ramified covers $(\pi_i\colon Y_i\rightarrow A_K)_{i=1}^n$
with $(Y_i)_L$ irreducible,
and a nonempty open subset $U\subseteq A_K$.
By shrinking $U$ if necessary, we can assume that each $\pi_i$ is \'etale over $U$ (Lemma \ref{lem:Branch}(a,b)).

Let $F'/K(A)$ be a finite Galois extension containing $K(Y_i)$ for every $i$,
and let $K'$ be the algebraic closure of $K$ in $F'$.
Then the normalization
$$
 Z:=A_K^{(F')}\rightarrow A_K
$$ 
is a Galois cover
that has each $\pi_i$ as a subcover.
Let $\varepsilon\colon A'\rightarrow A_K$ be a\footnote{In fact there exists a (up to isomorphism) unique largest unramified subcover, in particular they all factor through $A_{K'}\rightarrow A_K$, but this is not needed here.} maximal unramified subcover of $Z\rightarrow A_K$ factoring through $A_{K'}\rightarrow A_K$.
Then $\rho\colon Z\rightarrow A'$ is a fully ramified, geometrically integral Galois
cover of $K'$-varieties,
and by Lemma \ref{lem:fiber_products}(e) these properties are preserved under extending the base field $K'$,
so in particular $A'$ remains a maximal unramified subcover of $Z\rightarrow A_K$
after replacing
$K'$ by a finite extension $K^\ast$ and accordingly
$F'$ by $F'K^\ast$, $Z$ by $Z_{K^\ast}=A_K^{(F'K^\ast)}$,
and $A'$ by $(A')_{K^\ast}$.
Thus, by enlarging $K'$ by a finite extension if necessary,
we can assume without loss of generality that 
$(\varepsilon^{-1}(0_A))(K')\neq\emptyset$.
Then $A'$ can be given the structure of an abelian variety over $K'$
(this is known as the Lang--Serre theorem,
see e.g.~\cite[10.36]{vdGM})
and we can assume that $\varepsilon$ 
factors through an isogeny $\alpha\colon A'\rightarrow A_{K'}$.
By possibly enlarging $K'$ further,
we may also assume that 
$K'/K$ is Galois and
$\Lambda:={\rm Ker}(\alpha)\subseteq A'(K')$.
Replace $K$ by $L\cap K'$ so that $K'$
is linearly disjoint from $L$ over $K$,
and let $L'=K'L$.
By shrinking $U$ further, we can assume that the cover $Z\rightarrow A_K$
is unramified over $U$.
For $\lambda\in\Lambda$ let $\rho_\lambda:=\tau_\lambda\circ\rho$,
where $\tau_\lambda$ denotes translation by $\lambda$.
The following diagram commutes:
\[ 
\xymatrix{
       &&  && Z\ar[lld]\ar[d]^\rho\ar[dr]^{\rho_\lambda} \\
       && Y_i\ar[d]_{\pi_i}  && A'\ar[d]^\alpha\ar_\varepsilon[dll]&A'\ar[l]^{\tau_{-\lambda}}\ar[dl]^\alpha\\
     A && A_K\ar[ll] && A_{K'}\ar[ll]
} 
\]
Since $\rho$  is a fully ramified geometrically integral Galois cover of $K'$-varieties,
so is each $\rho_\lambda$,
hence by assumption there exists 
a finite Galois extension $K''/K$ containing $K'$, and a point
$x'\in A'(K''L)\cap\alpha^{-1}(U(L))$
with $(\rho_\lambda)_{K''L'}^{-1}(x')$ irreducible for every $\lambda\in\Lambda$.
Replacing $K'$ by $K''$ and accordingly
$K$ by $L\cap K''$, $L'$ by $K''L'$,
$A'$ by $A'_{K''}$ and $Z$ by $Z_{K''}$,
we can assume without loss of generality that $K'=K''$.

Let $x:=\alpha(x')\in U(L)$ and suppose
that there exist $i$ and $y\in Y_i(L)$
with $x=\pi_i(y)$.
Let $z\in Z(\bar{K})$ that maps to $y$ under
the cover $Z\rightarrow Y_i$,
and let $x''=\rho(z)$.
Then $\alpha(x'')=x$, hence $x'':=x'-\lambda$ for some $\lambda\in\Lambda$.
Therefore $\rho^{-1}_{L'}(x'')=(\rho_\lambda)^{-1}_{L'}(x')$ is irreducible, i.e.~$\rho^{-1}_{L'}(x'')=\{z\}$.

Base changing everything to $L$ gives the following situation:
$$
 \xymatrix{
  (Y_i)_L\ar[d]_{(\pi_i)_L} & Z_{L'}\ar[l]\ar[d] && y\ar@{|->}[d] & z\ar@{|->}[l]\ar@{|->}[d]\\
  A_L & A'_{L'}\ar[l]&& x & x''\ar@{|->}[l]\\
 }
$$
Trivially $D(z/x)\supseteq D(z/y)$,
so since
$$
 D(z/x)\cong{\rm Gal}(L'(z)/L(x))={\rm Gal}(L'(z)/L)={\rm Gal}(L'(z)/L(y))\cong D(z/y)
$$
by Lemma~\ref{lem:decomposition_group}(a),
we have that $D(z/x)=D(z/y)\subseteq{\rm Gal}(Z_{L'}/(Y_i)_L)$.
Since $\rho_{L'}^{-1}(x'')=\{z\}$,
Lemma~\ref{lem:decomposition_group}(b) implies that ${\rm Gal}(Z_{L'}/A'_{L'})\subseteq D(z/x)$.
Thus ${\rm Gal}(Z_{L'}/A'_{L'})\subseteq{\rm Gal}(Z_{L'}/(Y_i)_L)$,
hence $(\pi_i)_L$ is a subcover of $A'_{L'}\rightarrow A_L$,
contradicting that fact that the former is ramified while the latter is not.
\end{proof}

For this rough sketch we now
assume for simplicity that $K'=K$, $A'=A$, $U=A$ and $m=1$,
i.e.~we are given an abelian variety $A$ over $K$ and a fully ramified geometrically integral Galois cover $\pi\colon Y\rightarrow X:=A$ and want to find $x\in X(L)$ with $\pi_L^{-1}(x)$ irreducible.
For this we will need some assumption on $X(L)$,
so let us assume for simplicity that $X(K)$ is Zariski-dense in $X$.
Let us first look at Haran's proof of his diamond theorem \cite{Haran}, which achieves something similar in the case 
where $X$ is not an abelian variety but $X=\mathbb{P}^1$,
and $\pi$ is merely a Galois cover.
Drastically simplified, and rephrased in geometric terms, 
Haran chooses a suitable finite Galois subextension $M/K$ of $L/K$
and takes the restriction of scalars 
$$
 {\rm Res}_{M/K}(\pi_M)\colon{\rm Res}_{M/K}(Y_M)\longrightarrow{\rm Res}_{M/K}(\mathbb{P}^1_M).
$$
Recall that for an $M$-variety $W$, the restriction ${\rm Res}_{M/K}(W)$ 
is a $K$-variety that represents the functor 
on $K$-schemes
given by $S\mapsto W(S\times_{{\rm Spec}(K)}{\rm Spec}(M))$,
and that ${\rm dim}({\rm Res}_{M/K}(W))=n\cdot {\rm dim}(W)$, where $n=[M:K]$.
If we set $G={\rm Gal}(M/K)$ and $\Gamma={\rm Gal}(Y/X)$, then 
the composition 
$$
 \rho\colon {\rm Res}_{M/K}(Y_M)_M\longrightarrow {\rm Res}_{M/K}(\mathbb{P}^1_M)_M\longrightarrow {\rm Res}_{M/K}(\mathbb{P}^1_M)
$$
is again a Galois cover,
with Galois group
the wreath product
$\Gamma\wr G$:

\begin{definition}\label{def:wr}
For groups $\Gamma$ and $G$, we denote by $\Gamma\wr G=\Gamma^G\rtimes G$
the wreath product,
where $G$ acts on the set of functions
$$
 \Gamma^G := \{f\colon G\rightarrow\Gamma \}
$$
from the right by
$$
 f^h(g) = f(hg),\quad f\in\Gamma^G,\;g,h\in G.
$$
It comes with a canonical projection map 
$$
 \mathrm{pr}\colon\begin{cases} \Gamma\wr G&\rightarrow\quad G\\(f,g)&\mapsto\quad g\end{cases}
$$
and, for each $g\in G$,
an evaluation map 
$$
 e_g\colon\begin{cases}\Gamma^G&\rightarrow\quad\Gamma\\f&\mapsto\quad f(g).\end{cases}
$$ 
\end{definition}

Since the rational variety ${\rm Res}_{M/K}(\mathbb{P}^1_M)$
is known to again have HP if $\mathbb{P}^1_K$ has HP,
Haran obtains $y\in{\rm Res}_{M/K}(\mathbb{P}^1_M)(K)$ for which
the fiber $\rho^{-1}(y)$ is irreducible,
cf.~\cite[Prop.~3.3.1]{Serre},
i.e.~$\rho^{-1}(y)={\rm Spec}(F)$ for a finite Galois extension $F/K$.
The group theoretic interplay between
${\rm Gal}(F/K)\cong\Gamma\wr G$
and ${\rm Gal}(L/K)$ then implies
that if $x\in\mathbb{P}^1(M)$ is the point corresponding to $y\in{\rm Res}_{M/K}(\mathbb{P}^1_M)(K)$,
the fiber $\pi_L^{-1}(x)$ is irreducible.

Back to the case of a fully ramified geometrically integral Galois cover $\pi\colon Y\rightarrow X=A$ of our abelian variety $A$, 
also here we can take the restriction
$$
 {\rm Res}_{M/K}(\pi_M)\colon{\rm Res}_{M/K}(Y_M)\longrightarrow B:={\rm Res}_{M/K}(A_M),
$$
where now $B$ is an abelian variety,
and
$$
 \rho\colon{\rm Res}_{M/K}(Y_M)_M\longrightarrow B_M\longrightarrow B
$$
turns out to be again a fully ramified Galois cover (of $K$-varieties) with Galois group
$\Gamma\wr G$,
but the problem is that 
$B$ does in general not have WHP even if $A$ does,
simply since $B(K)$ need not be Zariski-dense in $B$,
even if $A(K)$ was Zariski-dense in $A$.
In fact, if $A(K)=A(M)$, then the Zariski closure of $B(K)$ in $B$ is just
the image of the diagonal embedding
$$
 \Delta\colon A\longrightarrow B={\rm Res}_{M/K}(A_M).
$$
However, we can now restrict the cover $\rho$ to 
the subvariety $\Delta(A)$ of $B$,
similar to what is done in
\cite[Prop.~3.2.1]{Serre} and
\cite[Thm.~3.9, Lemma 3.14]{CDJLZ}.
Let $Z=\Delta^{-1}({\rm Res}_{M/K}(Y_M)_M)$ be the fiber product of $A$ and ${\rm Res}_{M/K}(Y_M)_M$ over $B$. 
$$
 \xymatrix{
  Z\ar[rr]\ar[d] && {\rm Res}_{M/K}(Y_M)_M\ar[d]^{\rho}\\
  A\ar[rr]^\Delta && B
 }
$$
We could then continue similar to Haran,
using the fact that $A$ has WHP by Theorem \ref{thm:CZ},
if only $Z$ was irreducible, but unfortunately it is not.
To remedy this, 
we would like to replace $\Delta$ by a different embedding of $A$ into $B$ similar to \cite[Theorem 5.1]{Zannier}.
Equivalently, 
we can keep $\Delta$ and modify $\rho$.
So we first replace the cover $\pi_M$
by 
\begin{equation}\label{eqn:pi1}
 \pi_1:=\tau_t\circ\pi_M\colon Y_1=Y_M\longrightarrow A_M
\end{equation}
where $\tau_t$
is the translation by a suitably chosen point $t\in A(M)$,
and then take ${\rm Res}_{M/K}(\pi_1)$.
This way we can achieve that 
the Galois group of the normalization of an irreducible component of
$Z_t=\Delta^{-1}({\rm Res}_{M/K}(Y_1)_M)$ over $A$
is the full wreath product $\Gamma\wr G$, or at least a sufficiently big subgroup thereof.
More concretely,
since $B_M\cong (A_M)^n$ and 
$$
 {\rm Res}_{M/K}(Y_1)_M\cong\prod_{g\in G}(Y_1)^g=\prod_{g\in G}Y_g
$$ 
where 
$$
 \pi_g:=\pi_1^g=\tau_{g(t)}\circ\pi_M\colon Y_g=(Y_1)^g\longrightarrow A_M
$$ 
is the conjugate cover,
we have that $Z_t$ is the fiber product of the covers $(\pi_g)_{g\in G}$,
and the normalization of an irreducible component of $Z_t$
is the normalization $X^{(F)}$ of $X=A$ in the field compositum $F=\prod_{g\in G}M(Y_g)$.
The point $t$ is then chosen to make the branch loci of these conjugate covers sufficiently distinct,
so that we can apply the following two lemmas:

\begin{lemma}\label{lem:lin_disj_fully_ram}
Let $X$ be a regular geometrically integral $K$-variety,
and for $i=1,\dots,n$ let 
$\pi_i\colon Y_i\rightarrow X$ be a fully ramified geometrically integral Galois cover with Galois group $\Gamma_i$
and branch locus $B_i$.
Assume that for each $1\leq i<j\leq n$,
$B_i$ and $B_j$ have no common irreducible component.
Let $Z=X^{(F)}$ be the normalization of $X$
in the field compositum $F=\prod_{i=1}^nK(Y_i)$.
Then $Z\rightarrow X$ is a fully ramified geometrically integral Galois cover with Galois group $\prod_{i=1}^n\Gamma_i$
and branch locus $\bigcup_{i=1}^nB_i$.
\end{lemma}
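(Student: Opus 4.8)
The plan is to argue by induction on $n$, the case $n=1$ being trivial. For the inductive step, set $Y^\ast:=X^{(\prod_{i<n}K(Y_i))}$; by the inductive hypothesis $Y^\ast\to X$ is a geometrically integral fully ramified Galois cover with group $\prod_{i<n}\Gamma_i$ and branch locus a union of components of $\bigcup_{i<n}B_i$. Since no $B_i$ with $i<n$ shares a component with $B_n$, and since (by a dimension count in the integral scheme $X$, using $\codim(C,X)=\dim\OOO_{X,c}$ for $c$ the generic point of $C$) a codimension-one irreducible closed subset of $X$ contained in a closed set with pure codimension one is one of its components, the branch loci of $Y^\ast\to X$ and $Y_n\to X$ have no common component. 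So it suffices to treat the case $n=2$; I write $F:=K(Y_1)K(Y_2)$ and $Z:=X^{(F)}$, which by Lemma~\ref{lem:normalization} is a cover of $X$.

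First I would show $K(Y_1)\cap K(Y_2)=K(X)$. The cover $X^{(K(Y_1)\cap K(Y_2))}\to X$ is a subcover of both $\pi_1$ and $\pi_2$, hence geometrically integral and fully ramified, with branch locus contained in $B_1\cap B_2$ by Lemma~\ref{lem:ramparts}(a); were it nontrivial it would be ramified, so by Lemma~\ref{lem:Branch}(b) it would have a codimension-one branch component, which by the observation above would be a common component of $B_1$ and $B_2$, a contradiction. As $K(Y_2)/K(X)$ is Galois, this gives that $K(Y_1),K(Y_2)$ are linearly disjoint over $K(X)$, so $Z\to X$ is Galois with $\Gal(Z/X)=\Gamma_1\times\Gamma_2$. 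Moreover, over $U:=X\smallsetminus(B_1\cup B_2)$ both $\pi_i$ are \'etale, so $\pi_1^{-1}(U)\times_U\pi_2^{-1}(U)$ is normal, finite over $U$, and integral with function field $F$ (Lemma~\ref{lem:fiber_products}(b)), hence equals $Z|_U$ and is \'etale over $U$; thus $\mathrm{Branch}(Z\to X)\subseteq B_1\cup B_2$, and by Lemma~\ref{lem:Branch}(b) and the observation above it is a union of components of $B_1\cup B_2$.

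The crux is to show that $Z\to X$ has no nontrivial \'etale subcover. Let $I\trianglelefteq\Gamma_1\times\Gamma_2$ be the subgroup generated by all inertia groups $I(z/\xi)$ of the Galois cover $Z\to X$, where $\xi$ runs through the codimension-one points of $X$ and $z$ through the points above $\xi$; this is normal, the family of these subgroups being conjugation-stable. I claim $I=\Gamma_1\times\Gamma_2$. Fix a component $C$ of $B_1$ with generic point $\xi$; by the no-common-component hypothesis and the observation above $C\not\subseteq B_2$, so $\pi_2$ is \'etale over $\xi$, and hence for $z$ above $\xi$ the image of $I(z/\xi)$ under the projection to $\Gamma_2$ is the trivial inertia group of $K(Y_2)/K(X)$ at the relevant place, so $I(z/\xi)\subseteq\Gamma_1\times 1$, while its image under the projection to $\Gamma_1$ is the inertia group of $K(Y_1)/K(X)$ there. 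Letting $C$ range over the components of $B_1$ and using that $\pi_1$ is fully ramified — so that the subgroup of $\Gamma_1$ generated by all inertia groups of $K(Y_1)/K(X)$ equals $\Gamma_1$, since otherwise its fixed field would define a nontrivial subcover of $\pi_1$ unramified at every codimension-one point, hence \'etale by Lemmas~\ref{lem:Branch}(b) and~\ref{lem:unramet}, contradicting Remark~\ref{rem:fullyram} — I obtain $\Gamma_1\times 1\subseteq I$, and symmetrically $1\times\Gamma_2\subseteq I$, whence $I=\Gamma_1\times\Gamma_2$. Finally, for a subgroup $J\le\Gamma_1\times\Gamma_2$ a direct computation with discrete valuation rings (residue extensions being separable since $\mathrm{char}(K)=0$) shows that $X^{(F^J)}\to X$ is unramified over a codimension-one point $\xi$ if and only if $I(z/\xi)\subseteq J$ for every $z$ above $\xi$, so by Lemma~\ref{lem:Branch}(b) the cover $X^{(F^J)}\to X$ is \'etale if and only if $I\subseteq J$, i.e.\ if and only if $J=\Gamma_1\times\Gamma_2$; hence the only \'etale subcover is trivial.

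It remains to deduce the last two assertions. If $\alpha\in F$ is algebraic over $K$, then $K(X)\otimes_K K(\alpha)$ is a field by regularity of $K(X)/K$, so $X^{(K(X)K(\alpha))}$ is the base change $X_{K(\alpha)}$ and $X_{K(\alpha)}\to X$ is an \'etale subcover of $Z\to X$, hence trivial; this forces $\alpha\in K(X)$ and then $\alpha\in K$, so $K$ is algebraically closed in $F$ and $Z$ is geometrically integral (Remark~\ref{rem:normalization}). Being geometrically integral with no nontrivial \'etale subcover, $Z\to X$ is fully ramified by Remark~\ref{rem:fullyram} (recall that for covers unramified is equivalent to \'etale, Lemma~\ref{lem:unramet}). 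Together with the facts already established about $\Gal(Z/X)$ and the branch locus, this settles the case $n=2$ and completes the induction. I expect the inertia-theoretic step to be the main obstacle: the naive approach of showing directly that every proper subgroup of $\Gamma_1\times\Gamma_2$ gives a ramified subcover fails, since such a subgroup may surject onto both factors, and it is precisely the disjointness of the branch loci — which confines the inertia over a component of $B_i$ to the $i$-th factor — that rules this out.
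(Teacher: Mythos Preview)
Your argument is correct. The reduction to $n=2$, the proof that $K(Y_1)\cap K(Y_2)=K(X)$, and the computation of the branch locus all match the paper's treatment essentially verbatim.

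The genuine difference is in the proof that $Z\to X$ has no nontrivial \'etale subcover. The paper argues via fiber products: given an \'etale subcover $Y\to X$, it forms $Y\times_X Y_1$, which is geometrically integral by Lemma~\ref{lem:fiber_products}(f); Galois theory then produces an intermediate field $K(X)\subseteq F_2\subseteq K(Y_2)$ with $F_2K(Y_1)=K(Y\times_X Y_1)$, and since $X^{(F_2)}\to X$ is a subcover of both $Y\times_X Y_1\to X$ (branch locus contained in $B_1$) and of $\pi_2$ (branch locus $B_2$), the disjointness-and-purity argument from the first paragraph forces $F_2=K(X)$, whence $\deg(Y/X)=1$. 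Your route instead computes the subgroup $I\leq\Gamma_1\times\Gamma_2$ generated by all inertia groups at codimension-one points and shows $I=\Gamma_1\times\Gamma_2$: the separation of branch loci confines the inertia over a component of $B_i$ to the $i$-th factor, and full ramification of $\pi_i$ forces these to generate $\Gamma_i$. This is a more classical valuation-theoretic argument; it is self-contained in that it does not invoke Lemma~\ref{lem:fiber_products}, at the cost of appealing to the standard compatibility of inertia with quotients and intersections in Galois towers. The paper's version, by contrast, reuses its fiber-product machinery and avoids any explicit discussion of inertia groups. Both deduce geometric integrality of $Z$ from the absence of nontrivial \'etale subcovers in the same way.
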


\begin{proof}
By induction on $n$ it suffices to consider the case $n=2$.
We claim that $F_0:=K(Y_1)\cap K(Y_2)=K(X)$.
Indeed, $\rho\colon X^{(F_0)}\rightarrow X$
is a subcover both of $Y_1\rightarrow X$ and of $Y_2\rightarrow X$, hence the branch locus $B_0$ of $\rho$
is contained in $B_1\cap B_2$ (Lemma~\ref{lem:ramparts}(d)),
which has codimension at least $2$ in $X$ by assumption,
while every irreducible component of $B_0$ has
codimension 1 in $X$ by the purity theorem (Lemma~\ref{lem:Branch}(e)).
Thus, $\rho$ is unramified, which since $\pi_1$ is fully ramified
and $Y_1$ and therefore $X^{(F_0)}$ are geometrically integral,
implies that ${\rm deg}(\rho)=1$, hence $F_0=K(X)$.

Thus, by Galois theory, ${\rm Gal}(F/K(X))=\Gamma_1\times\Gamma_2$.
We now show that every
(not necessarily geometrically integral) unramified subcover $Y\rightarrow X$ 
of $Z\rightarrow X$ has degree ${\rm deg}(Y/X)=1$.
Since $Y\rightarrow X$ is \'etale (Lemma \ref{lem:unramet}(a)) 
and $Y_1\rightarrow X$ is a fully ramified
geometrically integral Galois cover, 
$Y\times_X Y_1\to Y$ is a cover and 
$Y\times_XY_1\rightarrow Y_1$ is an \'etale cover
(Lemma \ref{lem:fiber_products}(d,b)).
By Galois theory, there exists an intermediate field 
$K(X)\subseteq F_2\subseteq K(Y_2)$ with $F_2K(Y_1)=K(Y\times_XY_1)$,
so $X^{(F_2)}\rightarrow X$ is a subcover both of
$Y\times_XY_1\rightarrow X$, which has branch locus $B_1$ (Lemma \ref{lem:ramparts}(d)),
and of $Y_2\rightarrow X$, which is geometrically integral, fully ramified and has branch locus $B_2$.
By the argument in the first paragraph, we get that ${\rm deg}(X^{(F_2)}/X)=1$,
hence ${\rm deg}(Y\times_XY_1/Y_1)=1$,
which implies that ${\rm deg}(Y/X)=1$.
$$
 \xymatrix{
  && Z\ar[dl]\ar[ddrr] &&\\
  & Y\times_XY_1\ar[dl]\ar[ddr]\ar[ddrr] \\
  Y_1\ar[ddrr]_{\pi_1} &&  && Y_2\ar[dl]\ar@/^1.0pc/[ddll]^{\pi_2}\\
   && Y\ar[d] & X^{(F_2)}\ar[dl] \\
  && X 
 }
$$

In particular, $Z$ is a geometrically integral $K$-variety 
and $Z\rightarrow X$ is fully ramified. 
If we set  $U=X\setminus (B_1\cup B_2)$, 
then 
$\pi_1^{-1}(U)\rightarrow U$
and $\pi_2^{-1}(U)\rightarrow U$
are \'etale covers (Lemma~\ref{lem:unramet}(a)).
Moreover,
we have morphisms
$f\colon Y_1\times_XY_2\rightarrow X$ and $g\colon Z\rightarrow Y_1\times_XY_2$.
By Lemma~\ref{lem:fiber_products}(c),
$V:=f^{-1}(U)=\pi_1^{-1}(U)\times_U\pi_2^{-1}(U)\rightarrow U$ is an \'etale cover.
In particular it is a subcover of the cover
$(f\circ g)^{-1}(U)=g^{-1}(V)\rightarrow U$,
hence $(f\circ g)^{-1}(U)\rightarrow V$ is an isomorphism,
and so 
the branch locus $B$ of $Z\rightarrow X$ 
is contained in $X\setminus U=B_1\cup B_2$. On the other hand $B_1\cup B_2\subseteq B$ by Lemma~\ref{lem:unramet}(d), thus $B=B_1\cup B_2$.
\end{proof}

\begin{lemma}\label{lem:fiber_product_galois}
Let $X$ be a normal geometrically integral $K$-variety,
$M/K$ a finite Galois extension
with Galois group $G:={\rm Gal}(M/K)$
and
$\pi\colon Y\rightarrow X_M$ a Galois cover with Galois group $\Gamma:={\rm Gal}(Y/X_M)$.
For $g\in G$, we have the Galois cover $\pi^g\colon Y^g\rightarrow X_M$ and the isomorphism $\iota_g\colon \Gamma\to {\rm Gal}(Y^g/X_M)$ (Remark \ref{rem:gX}),
which induce an isomorphism
$$
 \Gamma\wr G=\Gamma^G\rtimes G\cong\left(\prod\nolimits_{g\in G}{\rm Gal}(Y^g/X_M)\right)\rtimes G.
$$
Let 
$F=\prod_{g\in G}M(Y^g)$
be the field compositum.
Then the composition
$$
    X_M^{(F)}\longrightarrow X_M\longrightarrow X
$$ 
is a Galois cover and there is an embedding
\begin{equation*}\label{eqn:phi}
    \psi\colon {\rm Gal}(X_M^{(F)}/X)\hookrightarrow
    \Gamma\wr G
\end{equation*}
such that $\mathrm{pr}\circ \psi$ and $\iota_g\circ e_g\circ \psi|_{{\rm Gal}(X_M^{(F)}/X_M)}$ for $g\in G$
(with $\mathrm{pr}$ and $e_g$ as in Definition \ref{def:wr}) are the restriction maps 
${\rm Gal}(X_M^{(F)}/X)\rightarrow{\rm Gal}(M/K)$
respectively
${\rm Gal}(X_M^{(F)}/X_M)\rightarrow{\rm Gal}(Y^g/X_M)$.
\end{lemma}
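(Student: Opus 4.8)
The plan is to pass to function fields, to recognize $F$ as the Galois closure of $M(Y)$ over $K(X)$, and to obtain $\psi$ as the imprimitive permutation representation of $\Gal(F/K(X))$ on the set of $K(X)$-embeddings of $M(Y)$.

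Write $E=K(X)$. Since $\pi$ is a cover, $X_M$ is an integral normal variety; as $X_M\to X$ is finite, \'etale and surjective (being the base change of the finite \'etale morphism $\Spec(M)\to\Spec(K)$), descent of normality shows that $X$ is normal and $X_M$ is its normalization in $N:=M(X_M)$, and moreover $N=M\otimes_K E$ is a field with $N/E$ Galois and $\Gal(N/E)$ canonically identified with $G$ (acting through the left tensor factor). I would then fix an algebraic closure $\bar E\supseteq N$, an embedding of $M(Y)$ over $N$ into $\bar E$, and, for each $g\in G$, a lift $\tilde g\in\Aut(\bar E/E)$ of $g\in G=\Gal(N/E)$ such that $\tilde g|_{M(Y)}$ is the isomorphism $M(Y)\xrightarrow{\ \sim\ }M(Y^g)\subseteq\bar E$ induced by $g_Y$; such a lift exists because that isomorphism lies over the automorphism of $N$ induced by $g$, and with this choice $\iota_g\colon\Gamma\to\Gal(M(Y^g)/N)$ is $\sigma\mapsto\tilde g\,\sigma\,\tilde g^{-1}$ (Remark~\ref{rem:gX}). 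Since $M(Y)/N$ is Galois, hence normal, and $N/E$ is Galois, the $E$-conjugates of $M(Y)$ inside $\bar E$ are precisely the subfields $\tilde g(M(Y))=M(Y^g)$, $g\in G$; hence $F=\prod_{g\in G}M(Y^g)$ is the Galois closure of $M(Y)/E$ in $\bar E$, so $F/E$ is Galois. By transitivity of normalization, $X_M^{(F)}$ is also the normalization of $X$ in $F$, so $X_M^{(F)}\to X$ is a Galois cover by Lemma~\ref{lem:normalization}, with $\Gal(X_M^{(F)}/X)=\Gal(F/E)$ and $\Gal(X_M^{(F)}/X_M)=\Gal(F/N)$.

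Next I would construct $\psi$. Let $\mathcal E=\mathrm{Emb}_E(M(Y),\bar E)$, on which $\Gal(F/E)$ acts by post-composition; the action is faithful because $F$ is generated over $E$ by the images of the elements of $\mathcal E$. Restriction to $N$ gives a surjection $\mathcal E\to G$ whose fibre over $g$ is the coset $\Gal(M(Y^g)/N)\circ(\tilde g|_{M(Y)})$ (using normality of $M(Y)/N$), and I identify this fibre with $\Gamma$ via $\rho\mapsto\iota_g^{-1}(\rho)$. For $\tau\in\Gal(F/E)$ with $h:=\tau|_N\in G$, the automorphism $\tau$ carries the fibre over $g$ to the fibre over $hg$, so $\tau$ acts on $\mathcal E$ as an element of the imprimitive wreath product $\Gamma\wr G=\Gamma^G\rtimes G$ of Definition~\ref{def:wr}, with $G$-component $h$ permuting the blocks by left translation; this is the embedding $\psi\colon\Gal(F/E)\hookrightarrow\Gamma\wr G$, and by construction $\mathrm{pr}\circ\psi$ is $\tau\mapsto\tau|_N$, i.e.\ the restriction map $\Gal(X_M^{(F)}/X)\to\Gal(X_M/X)=\Gal(M/K)$. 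Finally, for $\tau\in\Gal(X_M^{(F)}/X_M)=\Gal(F/N)$ we have $h=1$, so $\tau$ stabilizes each $M(Y^g)$ and acts on the fibre over $g$ by $\rho\mapsto(\tau|_{M(Y^g)})\circ\rho$; in the chosen coordinates this is left multiplication by $\iota_g^{-1}(\tau|_{M(Y^g)})$, i.e.\ $e_g(\psi(\tau))=\iota_g^{-1}(\tau|_{M(Y^g)})$, so $\iota_g\circ e_g\circ\psi|_{\Gal(F/N)}$ is the restriction map $\Gal(X_M^{(F)}/X_M)\to\Gal(M(Y^g)/N)=\Gal(Y^g/X_M)$, as required.

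The conceptual steps are standard. The part requiring care is the bookkeeping of conventions: choosing the lifts $\tilde g$ compatibly with the canonical isomorphisms $\iota_g$ of Remark~\ref{rem:gX}, and matching the imprimitive permutation action of $\Gal(F/E)$ on $\mathcal E$ precisely with the wreath-product convention $f^h(g)=f(hg)$ of Definition~\ref{def:wr} (getting the left/right translations and the order of composition right). Together with the elementary descent and transitivity-of-normalization facts used to see that $X_M^{(F)}\to X$ is a cover, this is the only place I expect to need some attention.
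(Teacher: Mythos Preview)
Your proposal is correct and follows essentially the same route as the paper: both recognize $F$ as the Galois closure of $M(Y)$ over $K(X)$ and obtain $\psi$ from the Krasner--Kaloujnine imprimitive embedding into the wreath product, the paper phrasing this via the action of $H=\Gal(F/K(X))$ on the coset space $H/\Gal(F/M(Y))$ (citing \cite[Ch.~I Thm.~2.6]{Meldrum}) while you phrase it via the canonically equivalent action on $\mathrm{Emb}_{K(X)}(M(Y),\bar E)$. The compatibilities with $\mathrm{pr}$ and $e_g$ are verified the same way, and your caution about matching the wreath-product convention of Definition~\ref{def:wr} is exactly the point the paper handles by quoting the explicit formula $\psi(h)=(f,g_0)$ with $f(g)=\vartheta(\widetilde{g_0 g}^{-1} h\tilde g)$ from \cite{Meldrum}.
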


\begin{proof}
    Since $X_M^{(F)}\rightarrow X_M$ is a cover (Remark \ref{lem:normalization}),
    so is $X_M^{(F)}\rightarrow X$.
    By Remark \ref{p_pi-g},
    each $g\in G={\rm Gal}(M/K)$ 
    extends to an isomorphism $g_Y\colon M(Y)\rightarrow M(Y^g)$ that is the identity on $K(X)$.
    It follows 
    for $\sigma\in{\rm Gal}(K(X))$
    that
    $\sigma(M(Y))=M(Y^{\sigma|_M})$,
    therefore
    $F$ is invariant under all $\sigma\in{\rm Gal}(K(X))$, hence $X_M^{(F)}\to X$ is Galois.  
    In particular, every $g_Y$ extends further to a $\tilde{g}\in{\rm Gal}(X_M^{(F)}/X)$.
    Let $H := {\rm Gal}(X_M^{(F)}/X)$, $H_1 := {\rm Gal}(X_M^{(F)}/X_M)$ and $H_2 := {\rm Gal}(X_M^{(F)}/Y)$,
    and identify
    $H/H_1 = G$, $H_1/H_2=\Gamma$ via the restriction maps.    
    $$
     \xymatrix{
        &             & X_M^{(F)}\ar[dll]_H\ar[dl]^{H_1}\ar[d]^{H_2}\\
      X & X_M\ar[l]^G & Y\ar[l]^\Gamma
     }
    $$    
    
    Recall that a permutation group is a pair $(H,T)$ 
    of a group $H$ and a set $T$ together with a faithful action of $H$ on $T$, so that we can identify $H$ with a subgroup
    $H\leq{\rm Sym}(T)$.
    We will view $G$ and $\Gamma$ as permutation groups
    $(G,G)$ and $(\Gamma,\Gamma)$ via their left regular representations.
    Now $H$ acts transitively on $T:=H/H_2$ by left multiplication.
    Since $F=\prod_{g\in G}M(Y^g)$ and $H_2\unlhd H_1$, the kernel of this action is
    $\bigcap_{h\in H}H_2^h\subseteq \bigcap_{g\in G}H_2^{\tilde{g}}=\bigcap_{g\in G}{\rm Gal}(X_M^{(F)}/Y^g)=1$,
    i.e.~the action is faithful,
    and so we obtain a permutation group $(H,T)$.
    
    The set $T_1:=H_1/H_2\in T$ is a block of imprimitivity of $H$,
    and if we set $T_\lambda:=\lambda T_1$ for $\lambda\in\Lambda:= H/H_1=G$, then $(T_\lambda)_{\lambda\in\Lambda}$
    is a block system.
    We thus obtain an action $\theta\colon H\rightarrow{\rm Sym}(\Lambda)$ of $H$ on $\Lambda$ with kernel
    $\bigcap_{h\in H}H_1^h=H_1$ and image
    $G\leq{\rm Sym}(G)={\rm Sym}(\Lambda)$,
    and $\theta\colon H\rightarrow G$ is precisely the restriction map.
    We also have the action $\vartheta\colon H_1\rightarrow{\rm Sym}(T_1)$ of the stabilizer $H_1$ of $T_1$ with kernel
    $\bigcap_{h\in H_1}H_2^h=H_2$ and image $\Gamma\leq{\rm Sym}(\Gamma)={\rm Sym}(T_1)$,
    and $\vartheta\colon H_1\rightarrow\Gamma$
    is precisely the restriction map.
    
    As we view $\Gamma$ and $G$ as permutation groups 
    on $\Gamma=T_1$ respectively $G=\Lambda$, the wreath product $\Gamma\wr G$ comes with a faithful action on $\Gamma\times G=T_1\times\Lambda$ which makes it into the permutational wreath product
    $(\Gamma\wr G,T_1\times\Lambda)$
    \cite[Ch.~I Def.~1.8]{Meldrum}.   
    The embedding theorem for wreath products then gives an embedding of permutation groups 
    $\psi\colon H\rightarrow\Gamma\wr G=\Gamma^G\rtimes G$
    given by
    $$
     \psi(h)=(f,g_0),\quad\mbox{ where }\quad g_0=\theta(h)\quad\mbox{ and }\quad    f(g)=\vartheta(\widetilde{g_0 g}^{-1} h\tilde{g} ),
    $$
    cf.~\cite[Ch.~I Thm.~2.6]{Meldrum} and line (2.8) of its proof\footnote{Note though that the theorem is incorrectly stated, as the stabilizer $H$ of a block $T_\iota$ need not act faithfully on that block, and so the group $H$ in that theorem has to be replaced by its image in ${\rm Sym}(T_{\iota})$. 
    Also the assumption there that the action is imprimitive is unnecessary if one allows the trivial partition as a block system.} or
    \cite[Theorem 8.5]{BMMN} and its proof.
    To conclude, we note that
    ${\rm pr}\circ\psi=\theta$
    by definition,
    and for $h\in H_1$,
    $$
     (\iota_g\circ e_g\circ \psi|_{{\rm Gal}(X_M^{(F)}/X_M)})(h)=
     \iota_g(\vartheta(\tilde{g}^{-1}h\tilde{g}))=\iota_g(g_Y^{-1}\circ h|_{M(Y^g)}\circ g_Y)
      = h|_{M(Y^g)}.\qedhere
    $$
\end{proof}

The proofs of Theorem \ref{thm:intro_abelian}
and
Theorem \ref{thm:intro_E_tor}
differ mainly in how the point $t$ in (\ref{eqn:pi1}) is constructed
(using the assumption on the rank, see Proposition \ref{prop:t}, 
respectively an explicit construction
of points on the elliptic curve using elementary number theory, see Lemma \ref{lem:values_of_pol} and Proposition \ref{prop:FreyJarden})
and how the groups
$\Gamma\wr G$ and ${\rm Gal}(L/K)$ interact
(Lemmas \ref{lem:gt_abelian} respectively \ref{lem:gt_tor}).
For clarity we do not phrase the proofs in terms of restriction of scalars
or fiber products of covers
but instead work directly with 
normalizations in field composita.

\section{Abelian extensions}
\label{sec:abelian}

\noindent
In this section we discuss 
ranks of abelian varieties (in particular the Frey--Jarden conjecture and some consequences)
and then prove Theorem \ref{thm:intro_abelian}.

\begin{definition}
The {\em (rational) rank}
of an abelian group $\Gamma$
is ${\rm rk}(\Gamma)={\rm dim}_\mathbb{Q}(\Gamma\otimes_\mathbb{Z}\mathbb{Q})$.
For a field extension $L/K$ and an abelian variety $A$ over $K$,
we call ${\rm rk}(A(L))$ the rank of $A$ over $L$.
\end{definition}

\begin{remark}\label{lem:rank}
Let $A,B$ be abelian varieties over $K$.
Then ${\rm rk}((A\times B)(K))={\rm rk}(A(K))+{\rm rk}(B(K))$.
If $A$ is isogeneous to $B$,
then ${\rm rk}(A(K))={\rm rk}(B(K))$.
\end{remark}


\begin{proposition}[Frey--Jarden]\label{prop:FreyJarden}
Let $E$ be an elliptic curve over $\mathbb{Q}$ with affine model 
$$
 E_0 : Y^2 = f(X), \quad f\in\mathbb{Z}[X].
$$
For $a\in\mathbb{Q}$ 
let $\sqrt{f(a)}$ denote one of the square roots of $f(a)$ in $\bar{\mathbb{Q}}$ and 
let
$$
 x_a := (a,\sqrt{f(a)}) \in E_0(\mathbb{Q}(\sqrt{f(a)})).
$$
There exists a sequence $(p_i)_{i=1}^\infty$ of prime numbers such that the sequence
$(x_{p_i^{-1}})_{i=1}^\infty$ of points is linearly independent in $E(\bar{\mathbb{Q}})$.
In particular,
${\rm rk}(E(\mathbb{Q}^{\rm ab}))=\infty$.
\end{proposition}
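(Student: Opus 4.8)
The plan is to produce the sequence $(p_i)$ explicitly and then verify linear independence by combining a height estimate (to force infinite order of the points $x_{1/p_i}$) with a valuation argument (to force a large multiquadratic field of definition, hence enough Galois automorphisms), essentially as in Frey--Jarden.

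First I would fix notation. Since $Y^2=f(X)$ defines an elliptic curve, $f(X)=c_3X^3+c_2X^2+c_1X+c_0$ with $c_3\ne 0$; for a prime $p$ put $g(p):=p^3f(1/p)=c_0p^3+c_1p^2+c_2p+c_3\in\mathbb{Z}$, so that $f(1/p)=g(p)/p^3$ and hence $\sqrt{f(1/p)}$ and $\sqrt{p\,g(p)}$ differ only by the rational factor $p^2$. Thus $\mathbb{Q}(\sqrt{f(1/p)})=\mathbb{Q}(\sqrt{p\,g(p)})$, which is a quadratic (in particular abelian) extension of $\mathbb{Q}$ as soon as $p\,g(p)$ is not a perfect square; moreover $g(p)\equiv c_3\pmod p$, so $p\nmid g(p)$, hence $v_p(p\,g(p))=1$, whenever $p\nmid c_3$.

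Next I would choose $p_1<p_2<\cdots$ inductively so that at each stage $n$: (i) $p_n\nmid c_3$; (ii) $g(p_n)\ne 0$; (iii) $p_n\nmid g(p_j)$ for all $j<n$; and (iv) $x_{1/p_n}$ is non-torsion. Conditions (i)--(iii) exclude only finitely many primes at stage $n$ (for (ii) note that $g$ is a nonzero polynomial, and then each $g(p_j)$ with $j<n$ is a nonzero integer with finitely many prime divisors), while (iv) holds for all sufficiently large $p$: there is a constant $c=c(E)$ with $\hat{h}_E(P)\ge\tfrac12 h(x(P))-c$ for all $P\in E(\bar{\mathbb{Q}})$ (comparison of N\'eron--Tate and naive height, uniform in the field of definition of $P$), and $h(x(x_{1/p}))=h(1/p)=\log p$, so $\hat{h}_E(x_{1/p})\ge\tfrac12\log p-c$, which is positive for $p$ large. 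Hence a valid $p_n$ exists at every stage. Now for any nonempty $S\subseteq\{1,\dots,n\}$ with $k=\max S$, conditions (i) and (iii) give $v_{p_k}(\prod_{j\in S}p_j g(p_j))=v_{p_k}(p_k g(p_k))+\sum_{j\in S,\,j<k}v_{p_k}(g(p_j))=1$, so $\prod_{j\in S}p_j g(p_j)$ is not a square. Therefore the classes of $f(1/p_1),\dots,f(1/p_n)$ are independent in $\mathbb{Q}^\times/(\mathbb{Q}^\times)^2$, the field $\mathbb{Q}(\sqrt{f(1/p_1)},\dots,\sqrt{f(1/p_n)})$ has degree $2^n$ over $\mathbb{Q}$, and for each $i$ there is $\sigma_i\in\mathrm{Gal}(\bar{\mathbb{Q}}/\mathbb{Q})$ with $\sigma_i(\sqrt{f(1/p_i)})=-\sqrt{f(1/p_i)}$ and $\sigma_i(\sqrt{f(1/p_j)})=\sqrt{f(1/p_j)}$ for $j\ne i$; equivalently $\sigma_i(x_{1/p_i})=-x_{1/p_i}$ and $\sigma_i(x_{1/p_j})=x_{1/p_j}$ for $j\ne i$.

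To finish, suppose $\sum_{i=1}^n m_i x_{1/p_i}=0$ in $E(\bar{\mathbb{Q}})$; applying $\mathrm{id}-\sigma_i$ yields $2m_i x_{1/p_i}=0$, so $m_i=0$ because $x_{1/p_i}$ is non-torsion, and since $i$ and $n$ were arbitrary, $(x_{1/p_i})_{i\ge 1}$ is linearly independent in $E(\bar{\mathbb{Q}})$. For the last assertion, each $x_{1/p_i}$ lies in $E(\mathbb{Q}(\sqrt{f(1/p_i)}))\subseteq E(\mathbb{Q}^{\rm ab})$, and a linearly independent family in an abelian group $A$ stays linearly independent in $A/A_{\rm tors}$ (a nontrivial relation in the quotient lifts, after scaling by the order of a torsion element, to a nontrivial relation in $A$), so $\mathrm{rk}(E(\mathbb{Q}^{\rm ab}))=\dim_{\mathbb{Q}}(E(\mathbb{Q}^{\rm ab})\otimes_{\mathbb{Z}}\mathbb{Q})=\infty$. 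The one genuinely delicate point is the multiquadratic independence of the fields $\mathbb{Q}(\sqrt{f(1/p_i)})$: this is exactly why the primes must be chosen inductively, condition (iii) being what prevents a later prime from dividing an earlier value $g(p_j)$ and thereby spoiling the valuation computation; by contrast, the non-torsion property (iv) is routine once one has the field-uniform height comparison.
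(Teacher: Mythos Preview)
Your proof is correct. The paper does not give its own argument here; it simply records that this is what is proven in \cite[Theorem~2.2]{FreyJarden}. What you have written is essentially a clean reconstruction of the original Frey--Jarden argument: the inductive choice of primes to force multiquadratic independence via the $p_k$-adic valuation of $\prod_{j\in S} p_j g(p_j)$, together with the Galois-automorphism trick $\mathrm{id}-\sigma_i$ to isolate each summand, is exactly their method. Your use of the field-uniform comparison $\hat h_E(P)\ge \tfrac12 h(x(P))-c$ to guarantee non-torsion is a standard and legitimate way to handle that step. There is nothing further to compare against in the paper itself.
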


\begin{proof}
This is what is actually proven in \cite[Theorem 2.2]{FreyJarden}.
\end{proof}

\begin{question}[Frey--Jarden]\label{q:FreyJarden}
Does every nonzero abelian variety $A$ over $\mathbb{Q}$
have infinite rank over $\mathbb{Q}^{\rm ab}$?
\end{question}

If Question \ref{q:FreyJarden} has a positive answer,
then also every nonzero abelian variety $A$ over $K^{\rm ab}$
has infinite rank over $K^{\rm ab}$,
for every number field $K$:

\begin{lemma}\label{lem:rank_res}
Let $K\subseteq M\subseteq L$ be fields with $M/K$ Galois and $L/M$ algebraic.
If every nonzero abelian variety over $K$
has infinite rank over $M$,
then every nonzero abelian variety over $L$
has infinite rank over $L$.
\end{lemma}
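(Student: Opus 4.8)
The plan is to reduce the statement about an arbitrary abelian variety over $L$ to a statement about an abelian variety over the Galois base $M$ (or rather a finite subextension), where the hypothesis applies. First I would let $B$ be a nonzero abelian variety over $L$. Since $L/M$ is algebraic and $B$ is of finite type, $B$ is defined already over some finite subextension: there is a finite extension $M'/M$ inside $L$ and an abelian variety $B'$ over $M'$ with $B'_L\cong B$. Because $M/K$ is Galois and $M'/M$ is finite, we may enlarge $M'$ so that $M'/K$ is finite Galois (adjoining the finitely many conjugates), and since enlarging $M'$ only enlarges $B'(M')$ this is harmless. Now ${\rm rk}(B(L))\ge {\rm rk}(B(M'))={\rm rk}(B'(M'))$, so it suffices to show ${\rm rk}(B'(M'))=\infty$.

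The next step is to descend $B'$ from $M'$ to $K$ using restriction of scalars. Set $G'={\rm Gal}(M'/K)$ and let $C={\rm Res}_{M'/K}(B')$, which is an abelian variety over $K$ (restriction of scalars of an abelian variety along a finite separable extension is again an abelian variety, of dimension $[M':K]\cdot{\rm dim}(B')$). It is nonzero since $B'$ is. By the defining adjunction property of restriction of scalars, $C(K)={\rm Res}_{M'/K}(B')(K)=B'(M')$. Applying the hypothesis to the nonzero abelian variety $C$ over $K$, we get that $C$ has infinite rank over $M$, i.e.\ ${\rm rk}(C(M))=\infty$; but $C(K)\subseteq C(M)$, and more to the point we actually want ${\rm rk}(C(K))$ — so a small care is needed here.

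Indeed the cleanest route is: the hypothesis gives ${\rm rk}(C(M))=\infty$, and $C(M)=({\rm Res}_{M'/K}B')(M)=B'(M'\otimes_K M)$. Now $M'\otimes_K M$ is a finite product of fields, each a finite extension of $M$ contained in $\bar M$; pick any factor, giving a finite extension $M''/M$ (inside, say, $\bar M$, which we may take inside $\bar L$) with ${\rm rk}(B'(M''))=\infty$. Since $M''/M$ is finite and $L/M$ is algebraic, the compositum $M'L$ (equivalently, base changing $B'$ to $L$ after first going up to $M'M''$... ) — here one must be slightly careful that $M''$ embeds into an extension comparable with $L$. Cleanest is to instead note $M'\subseteq L$, so in $C(L)=B'(M'\otimes_K L)$ the diagonal-type factor corresponding to the given embedding $M'\hookrightarrow L$ realizes $B'(L)=B(L)$ as a quotient of $C(L)$, whence ${\rm rk}(B(L))\ge{\rm rk}(C(L))\ge{\rm rk}(C(M))=\infty$, using $C(M)\subseteq C(L)$. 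This last inclusion holds because $M\subseteq L$.

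The main obstacle is bookkeeping the various fields correctly, in particular making sure that when one passes to a factor of $M'\otimes_K M$ the resulting field of infinite rank is contained in $L$ (or in a field from which points pull back into $B(L)$); the trick that avoids all of this is to work with $C(L)=B'(M'\otimes_K L)\supseteq C(M)$ and use that $M'\hookrightarrow L$ is one of the $K$-embeddings appearing in $M'\otimes_K L$, so that the projection $C(L)\twoheadrightarrow B'(L)=B(L)$ exists and ${\rm rk}(B(L))\ge{\rm rk}(C(L))\ge{\rm rk}(C(M))$, the last quantity being infinite by the hypothesis applied to the nonzero abelian variety $C={\rm Res}_{M'/K}(B')$ over $K$. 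Everything else (restriction of scalars of an abelian variety is an abelian variety; its $K$-points compute $B'(M')$; Remark~\ref{lem:rank} for ranks under isogeny and products, which is not even needed here) is standard.
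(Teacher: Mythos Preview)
Your argument has a genuine gap: you assume that $M'/K$ can be made finite Galois, but the lemma does not assume $M/K$ is finite, and in the intended application it is not (e.g.\ $K=\mathbb{Q}$, $M=\mathbb{Q}^{\rm ab}$). Since $M\subseteq M'$, the extension $M'/K$ is infinite whenever $M/K$ is, so ${\rm Res}_{M'/K}(B')$ makes no sense as an abelian variety over $K$, and the hypothesis cannot be applied to it. Your ``cleanest route'' still relies on this restriction of scalars. Moreover, even granting a finite $M'/K$, the step ``${\rm rk}(B(L))\ge{\rm rk}(C(L))$'' goes the wrong way unless $M'/K$ is Galois: in general $M'\otimes_K L$ is a product of finite extensions of $L$, only one of which is $L$ itself, so the infinite rank of $C(L)$ could sit entirely in the other factors.

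The paper circumvents this by doing the descent in two steps. First it proves the intermediate claim that every nonzero abelian variety $A$ over $M$ already has infinite rank over $M$: descend $A$ to an abelian variety $A_0$ over a \emph{finite} Galois subextension $K_0/K$ of $M/K$, form $B={\rm Res}_{K_0/K}(A_0)$ over $K$, apply the hypothesis to get ${\rm rk}(B(M))=\infty$, and then use $K_0\subseteq M$ with $K_0/K$ Galois to compute $B(M)=A_0(M\otimes_K K_0)=A_0(M)^{[K_0:K]}$, whence ${\rm rk}(A_0(M))=\infty$. Only then does one pass from $L$ to $M$: descend the given abelian variety over $L$ to $A_1$ over a finite subextension $M_1/M$ of $L/M$, and apply the intermediate claim to ${\rm Res}_{M_1/M}(A_1)$, noting ${\rm rk}(A_1(M_1))={\rm rk}({\rm Res}_{M_1/M}(A_1)(M))$. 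The key point you are missing is that the restriction of scalars must be taken along finite extensions, which forces one to work inside $M/K$ at a finite level before invoking the hypothesis.
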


\begin{proof}
We first show that every nonzero abelian variety $A$ over $M$ has infinite rank over $M$.
There exists a finite Galois subextension $K_0/K$ of $M/K$
and an abelian variety $A_0/K_0$ with $(A_0)_{M}=A$.
For $n=[K_0:K]$,
the restriction of scalars $B={\rm Res}_{K_0/K}(A_0)$
is an abelian variety over $K$ 
of dimension $n\cdot{\rm dim}(A)>0$,
see \cite[Section~7.6]{BLR},
hence ${\rm rk}(B(M))=\infty$ 
by assumption.
As $B(M)=A_0(M\otimes_{K}K_0)=A_0(M^n)=A_0(M)^n$,
we conclude (with Remark \ref{lem:rank}) that also ${\rm rk}(A(M))={\rm rk}(A_0(M))=\infty$.

If we now take a nonzero abelian variety $A$ over $L$,
then similarly we find a finite subextension $M_1/M$ of $L/M$ and an abelian variety $A_1/M_1$ with $(A_1)_L=A$, and $C={\rm Res}_{M_1/M}(A_1)$ is an abelian variety over $M$ with
${\rm rk}(A(L))\geq{\rm rk}(A_1(M_1))={\rm rk}(C(M))=\infty$.
\end{proof}

Thus, if 
Question \ref{q:FreyJarden}
has a positive answer, the assumptions of
Proposition~\ref{prop:t} below are therefore satisfied in the case 
where $K$ is a number field
and $L=K^{\rm ab}$.
The point $t$ constructed there will be essential
in our proof of Theorem \ref{thm:intro_abelian}.

\begin{lemma}\label{lem:t1}
Let $M_1,\dots,M_r$ be Galois extensions of $K$ such that
$M_i\not\subseteq M_1\cdots M_{i-1}$ for every $i$.
Then there exists $\sigma\in{\rm Gal}(M_1\cdots M_r/K)$
such that $\sigma|_{M_i}\neq 1$ for every $i$.
\end{lemma}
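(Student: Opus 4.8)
The plan is to argue by induction on $r$, working inside the Galois group $G=\Gal(M_1\cdots M_r/K)$ and using only one structural fact: whenever $N/K$ is Galois, restriction defines a surjective group homomorphism $\Gal(M_1\cdots M_r/K)\to\Gal(N/K)$. The base case $r=0$ is vacuous (take $\sigma=1$, as there is nothing to check), so assume $r\ge 1$ and that the statement holds for $r-1$. Write $L=M_1\cdots M_{r-1}$ and $M=M_1\cdots M_r=LM_r$; both $L/K$ and $M/K$ are Galois as composita of Galois extensions.

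First I would apply the inductive hypothesis to the sub-collection $M_1,\dots,M_{r-1}$ (which inherits the hypotheses) to obtain $\sigma'\in\Gal(L/K)$ with $\sigma'|_{M_i}\neq 1$ for all $i<r$, and then lift it, using surjectivity of $\Gal(M/K)\to\Gal(L/K)$, to some $\tilde\sigma\in\Gal(M/K)$ with $\tilde\sigma|_L=\sigma'$. Since $M_i\subseteq L$ for $i<r$, we already have $\tilde\sigma|_{M_i}=\sigma'|_{M_i}\neq 1$ for those $i$; so if moreover $\tilde\sigma|_{M_r}\neq 1$, then $\sigma:=\tilde\sigma$ works and we are done.

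The remaining case is $\tilde\sigma|_{M_r}=1$, and here I would correct $\tilde\sigma$ by an element of $\Gal(M/L)$. The hypothesis $M_r\not\subseteq M_1\cdots M_{r-1}=L$ gives $M\neq L$, hence $\Gal(M/L)\neq 1$; and since $M=LM_r$ we have $\Gal(M/L)\cap\Gal(M/M_r)=\Gal(M/M)=1$, so any nontrivial $\rho\in\Gal(M/L)$ satisfies $\rho|_{M_r}\neq 1$. Now set $\sigma:=\tilde\sigma\rho$. Restricting the homomorphism to $L$ gives $\sigma|_L=\tilde\sigma|_L\,\rho|_L=\sigma'\cdot 1=\sigma'$, whence $\sigma|_{M_i}\neq 1$ for all $i<r$; and restricting to $M_r$ gives $\sigma|_{M_r}=\tilde\sigma|_{M_r}\,\rho|_{M_r}=1\cdot\rho|_{M_r}\neq 1$. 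This completes the induction.

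I do not expect a genuine obstacle: the argument is elementary Galois theory. The only points requiring a little care are (i) recording which intermediate fields are Galois over $K$, so that the relevant restriction maps are honest homomorphisms and the products $\tilde\sigma|_L\,\rho|_L$ etc.\ make sense, and (ii) noticing that the ordering hypothesis $M_i\not\subseteq M_1\cdots M_{i-1}$ enters the proof exactly once — to guarantee that $\Gal(M/L)$ is nontrivial in the inductive step, so that a correcting element $\rho$ exists.
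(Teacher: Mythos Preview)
Your proof is correct and follows essentially the same inductive route as the paper. The only organizational difference is that the paper inducts on the slightly stronger statement that \emph{every} $\sigma_0\in\Gal(M_1\cdots M_{r-1}/K)$ extends to some $\sigma\in\Gal(M_1\cdots M_r/K)$ with $\sigma|_{M_r}\neq 1$, extending $\sigma_0|_{L\cap M_r}$ to a nontrivial element of $\Gal(M_r/K)$ and patching; your version instead lifts arbitrarily and then corrects by a nontrivial $\rho\in\Gal(M/L)$ when necessary, which amounts to the same Galois-theoretic content.
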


\begin{proof}
Induction on the stronger statement that
every $\sigma_0\in{\rm Gal}(M_1\cdots M_{r-1}/K)$ 
extends to a $\sigma\in{\rm Gal}(M_1\cdots M_r/K)$
with $\sigma|_{M_r}\neq 1$.
Let $M_0=(M_1\cdots M_{r-1})\cap M_r$.
Note that $M_0/K$ and $M_r/K$ are Galois and $M_0\subsetneqq M_r$.
Therefore
$\sigma_0|_{M_0}$ extends to some $1\neq\sigma_r\in{\rm Gal}(M_r/K)$,
and there exists a unique $\sigma\in{\rm Gal}(M_1\cdots M_r/K)$ with
$\sigma|_{M_1\cdots M_{r-1}}=\sigma_0$
and $\sigma|_{M_r}=\sigma_r$.
\end{proof}

\begin{lemma}\label{lem:t2}
Let $A$ be an abelian variety over $K$ and $x\in A(\bar{K})$.
There exists $m>0$ such that $K(nmx)=K(mx)$ for every $n>0$.
\end{lemma}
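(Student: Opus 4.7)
The plan is to find $m$ for which $[K(mx):K]$ is as small as possible; then no further multiplication of $mx$ by a positive integer $n$ can shrink this field.

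First, since $A$ is of finite type over $K$, the subfield $K(x)\subseteq\bar K$ -- defined as the image of the residue field at the image point $y\in A$ of the morphism $x\colon\mathrm{Spec}(\bar K)\to A$ -- is a finite (algebraic) extension of $K$. Now $\mathrm{Gal}(K)$ acts on $A(\bar K)$ by group automorphisms, so any $\sigma\in\mathrm{Gal}(K)$ fixing $x$ also fixes $kx$ for every $k\geq 1$. Equivalently, $K(kx)\subseteq K(x)$ for all $k\geq 1$. Hence the family $\mathcal{F}:=\{K(kx):k\geq 1\}$ consists of subfields of the finite extension $K(x)/K$, and since a finite extension of a characteristic-zero field has only finitely many intermediate subfields, $\mathcal{F}$ is finite.

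Pick $m\geq 1$ such that $[K(mx):K]$ is minimal among $\{[K':K]:K'\in\mathcal{F}\}$. For any $n>0$, applying the observation of the previous paragraph to $mx$ in place of $x$ yields $K(nmx)\subseteq K(mx)$, so $[K(nmx):K]\leq [K(mx):K]$. Since $K(nmx)\in\mathcal{F}$, minimality also forces $[K(nmx):K]\geq [K(mx):K]$; thus $[K(nmx):K]=[K(mx):K]$, and the inclusion is an equality: $K(nmx)=K(mx)$.

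The only things to check are the finiteness of $K(x)/K$ (immediate from $A$ being of finite type over $K$) and the monotone inclusion $K(kx)\subseteq K(x)$ (a direct consequence of $\mathrm{Gal}(K)$ acting by group automorphisms on $A(\bar K)$); both are routine, so there is no real obstacle.
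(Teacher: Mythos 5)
Your proof is correct and follows essentially the same route as the paper, whose one-line argument rests on exactly the two facts you verify: $K(x)/K$ is finite and $K(nmx)\subseteq K(mx)$, from which one picks $m$ minimizing $[K(mx):K]$. Your write-up just makes the minimality step explicit (and note that boundedness of the degrees $[K(kx):K]$ by $[K(x):K]$ already suffices, without invoking finiteness of the set of intermediate fields).
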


\begin{proof}
This holds since $K(x)/K$ is finite
and $K(nmx)\subseteq K(mx)$ for every $m,n$.
\end{proof}

\begin{lemma}\label{lem:t3}
Let $k>0$, $r\geq0$ and let $V_1,\dots,V_r\leq\mathbb{Z}^k$ be subgroups of infinite index.
Then there exists $x\in\mathbb{Z}^k$ such that
$nx\notin\bigcup_{j=1}^rV_j$ for every $n>0$.
\end{lemma}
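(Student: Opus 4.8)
The plan is to induct on $r$, the number of subgroups. For $r = 0$ the statement is vacuous (any $x$ works, e.g.\ $x=0$), so assume $r \geq 1$ and that the claim holds for $r-1$. The key observation is that a subgroup $V \leq \mathbb{Z}^k$ has infinite index if and only if it is contained in a proper \emph{direct summand}, equivalently if and only if the saturation $V^{\mathrm{sat}} := (V \otimes_{\mathbb{Z}} \mathbb{Q}) \cap \mathbb{Z}^k$ is a proper subgroup of $\mathbb{Z}^k$; and crucially, $nx \in V$ for some $n > 0$ if and only if $x \in V^{\mathrm{sat}}$. So it suffices to find $x \in \mathbb{Z}^k$ with $x \notin \bigcup_{j=1}^r V_j^{\mathrm{sat}}$, i.e.\ $x \notin \bigcup_{j=1}^r W_j$ where each $W_j := V_j^{\mathrm{sat}}$ is a proper subgroup of $\mathbb{Z}^k$ which is a direct summand (hence $\mathbb{Z}^k / W_j$ is free of positive rank).

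The heart of the matter is therefore the following cleaner statement: a finitely generated free abelian group $\mathbb{Z}^k$ is never the union of finitely many proper subgroups $W_1, \dots, W_r$. I would prove this directly. First, I may assume each $W_j$ is a \emph{maximal} proper subgroup, hence of prime index; replacing $W_j$ by a maximal subgroup containing it only enlarges the union. After discarding repetitions we may assume the $W_j$ are pairwise distinct maximal subgroups. If two distinct maximal subgroups $W_i, W_j$ have the same prime index $p$, then $W_i + W_j = \mathbb{Z}^k$ and $|\mathbb{Z}^k : W_i \cap W_j| = p^2$. A counting argument then finishes it: grouping the $W_j$ by their prime index $p$, the subgroups of index $p$ are exactly the kernels of the nonzero maps $\mathbb{Z}^k \to \mathbb{F}_p$ up to scalar, so there are at most $(p^k-1)/(p-1)$ of them, and their union has at most a fraction $1/p$ of any fixed large box $\{0,\dots,N-1\}^k$ in each coordinate-free sense — more precisely, for a single prime $p$, the union of \emph{all} index-$p$ subgroups is $\{x : x \bmod p \text{ lies in some hyperplane of } \mathbb{F}_p^k\}$, which misses the points reducing to a fixed nonzero vector not annihilated by...

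Let me instead use the standard slick argument, which avoids case analysis: reduce to the case $r$ minimal, so no $W_j$ is redundant; pick $a \in W_1 \setminus \bigcup_{j \geq 2} W_j$ and $b \in \mathbb{Z}^k \setminus W_1$ (possible since $W_1$ is proper and $r$ is minimal). Consider the $k+1$ elements... actually, consider the infinitely many elements $b, a+b, 2a+b, 3a+b, \dots$. None lies in $W_1$ (else $b \in W_1$). For each $j \geq 2$, at most one value $ma+b$ lies in $W_j$ (if $ma+b, m'a+b \in W_j$ with $m \neq m'$ then $(m-m')a \in W_j$, forcing $a \in W_j$, contradiction). Hence only finitely many of the infinitely many elements $ma+b$, $m \in \mathbb{Z}_{\geq 0}$, lie in $\bigcup_{j=1}^r W_j$, so some $ma+b$ avoids the union; take $x = ma+b$.

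The main obstacle is purely the bookkeeping around saturations: one must check carefully that ``$V$ has infinite index'' $\iff$ $V^{\mathrm{sat}} \neq \mathbb{Z}^k$, and that ``$\exists n>0: nx \in V$'' $\iff$ $x \in V^{\mathrm{sat}}$. Both are standard from the structure theorem for finitely generated abelian groups (choose a basis of $\mathbb{Z}^k$ adapted to $V$), and once these are in hand the ``free abelian group is not a finite union of proper subgroups'' step via the line $\{ma+b\}$ is elementary and needs no primality or counting at all. I would present it in that order: reduce to proper subgroups via saturation, then the one-paragraph line argument.
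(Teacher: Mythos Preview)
Your argument is correct, and the underlying idea coincides with the paper's: both pass from the $V_j$ to their $\mathbb{Q}$-spans. One caution: your ``cleaner statement'' that $\mathbb{Z}^k$ is never a finite union of proper subgroups is \emph{false} as stated---for instance $\mathbb{Z}^2$ is covered by the three index-$2$ subgroups $2\mathbb{Z}\times\mathbb{Z}$, $\mathbb{Z}\times 2\mathbb{Z}$, and $\{(a,b):a+b\equiv 0\bmod 2\}$. Your line argument silently uses saturation at the step ``$(m-m')a\in W_j$ forces $a\in W_j$'', which is exactly where it would fail for arbitrary proper subgroups; since your $W_j$ are the saturations $V_j^{\mathrm{sat}}$ this is fine, but you should state the claim only for saturated (equivalently, infinite-index) subgroups.

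The paper's proof is the same reduction done in one breath: since each $V_j\otimes_{\mathbb{Z}}\mathbb{Q}$ is a proper subspace of $\mathbb{Q}^k$, and a vector space over an infinite field is not a finite union of proper subspaces, pick $y\in\mathbb{Q}^k$ outside all of them and clear denominators to get $x\in\mathbb{Z}^k$. This avoids the saturation bookkeeping and the line argument entirely (the latter being one standard proof of the vector-space fact), so it is a couple of lines rather than a page. Your route is a valid and self-contained alternative, but once you have observed $V_j^{\mathrm{sat}}=(V_j\otimes\mathbb{Q})\cap\mathbb{Z}^k$, you may as well work in $\mathbb{Q}^k$ and quote the vector-space fact.
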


\begin{proof}
Since 
$V_j\otimes_\mathbb{Z}\mathbb{Q}$ is a proper subspace of the vector space $\mathbb{Q}^k$,
we can pick $y\in\mathbb{Q}^k\setminus \bigcup_{j=1}^r V_j\otimes_\mathbb{Z}\mathbb{Q}$,
and then $\lambda y\notin \bigcup_{j=1}^r V_j\otimes_\mathbb{Z}\mathbb{Q}$ for every $\lambda\in\mathbb{Q}^\times$.
So if $m\neq 0$ is such that $my\in\mathbb{Z}^k$,
then $x:=my$ satisfies the claim.
\end{proof}

\begin{proposition}\label{prop:t}
Let $A$ be an abelian variety over $K$,
and let $L/K$ be an abelian extension.
Let $\alpha\colon\prod_{i=1}^kA_i\rightarrow A$
be an isogeny
where each $A_i$ is a simple abelian variety over $K$ with
${\rm rk}(A_i(L)/A_i(K_1))=\infty$
for every finite subextension $K_1/K$ of $L/K$.
For every finite set $S\subseteq A(\bar{K})$
and proper abelian subvarieties $B_1,\dots,B_r\subsetneqq A$
there exist $t\in A(L)$ and $\sigma\in{\rm Gal}(L/K)$
with 
$$
 t-\sigma(t)\;\notin\; S+\bigcup_{j=1}^rB_j(\bar{K}).
$$ 
\end{proposition}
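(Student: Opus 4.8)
The plan is to reduce to the case where $A$ is isogenous to a product of simple factors and use the rank hypothesis to produce many points whose field of definition we control. First I would replace $A$ by $\prod_{i=1}^kA_i$: since $\alpha$ is an isogeny, by Lemma~\ref{lem:av_dual_isog} there is $\alpha'\colon A\to\prod A_i$ with $\alpha'\circ\alpha=[n]$ and $\alpha\circ\alpha'=[n]$, and one checks that it suffices to find $t'\in(\prod A_i)(L)$ and $\sigma$ with $t'-\sigma(t')\notin\alpha'(S+\bigcup_jB_j(\bar K))+\prod A_i[n](\bar K)$, because then $t:=\alpha(t')$ works (the finite set $S$ and the finitely many $B_j$ on the $A$-side pull back to a finite set and finitely many proper abelian subvarieties on the product side, after enlarging by the kernel of $\alpha$). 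So from now on $A=\prod_{i=1}^kA_i$.

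The key point is to arrange $\sigma$ and $t$ componentwise. For a fixed finite subextension $K_1/K$ of $L/K$ that is large enough to contain the fields of definition of all points in $S$ and (scheme-theoretically) the subvarieties $B_j$, I would use the infinite-rank hypothesis $\mathrm{rk}(A_i(L)/A_i(K_1))=\infty$ to choose, for each $i$, a large supply of points $t_i\in A_i(L)$ that are $\mathbb{Q}$-linearly independent modulo $A_i(K_1)$. By Lemma~\ref{lem:t2} I may replace each $t_i$ by a multiple so that the field $K(mt_i)$ stabilizes under further multiplication, giving Galois extensions $M_i:=K_1(t_i)/K_1$ that are nontrivial (a point is not defined over $K_1$ precisely because it is independent modulo $A_i(K_1)$) and — by picking the $t_i$ successively from independent directions and using that $L/K$ is abelian so all these extensions are abelian, hence their compositum is abelian — I can ensure $M_i\not\subseteq M_1\cdots M_{i-1}$ for each $i$. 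Then Lemma~\ref{lem:t1} produces $\sigma_0\in\mathrm{Gal}(M_1\cdots M_r/K_1)$ with $\sigma_0|_{M_i}\neq1$ for all $i$, which I extend to $\sigma\in\mathrm{Gal}(L/K)$; by construction $t_i-\sigma(t_i)\neq 0$ for every $i$, and in fact $t_i-\sigma(t_i)$ can be taken to lie outside any prescribed finite union of proper subgroups of the (finitely generated) group they generate, by the linear-algebra Lemma~\ref{lem:t3}.

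The last step is to combine the components so the difference $t-\sigma(t)=(t_1-\sigma(t_1),\dots,t_k-\sigma(t_k))$ avoids $S+\bigcup_jB_j(\bar K)$ together with the extra finite kernel group coming from the first reduction. Each proper abelian subvariety $B_j\subsetneq\prod A_i$ is, up to isogeny, a product of a subset of the simple factors $A_i$ (since the $A_i$ are simple), so $B_j(\bar K)$ meets a suitable $A_{i(j)}$-component in a finite group; forcing $t_{i(j)}-\sigma(t_{i(j)})$ to avoid that finite set — which, after replacing $t_{i(j)}$ by a further multiple via Lemma~\ref{lem:t2} and invoking Lemma~\ref{lem:t3}, is possible since a finite set spans a subgroup of infinite index in the infinite-rank quotient — ensures $t-\sigma(t)\notin B_j(\bar K)$, and translating by the finitely many elements of $S$ (and of the finite kernel group) only enlarges the finite set to avoid. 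I expect the main obstacle to be the bookkeeping in this last step: making precise that "avoiding $S+\bigcup B_j(\bar K)$" can be decomposed into finitely many coordinate-wise conditions of the form "avoid a finite set in an infinite-rank quotient", which requires knowing that each $B_j$, being cut out by the simple-factor structure, is controlled by a single coordinate — and handling the translates by $S$ uniformly. Everything else is an application of Lemmas~\ref{lem:t1}, \ref{lem:t2}, \ref{lem:t3} and the definition of an abelian extension.
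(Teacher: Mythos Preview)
The gap is in your last step. You correctly observe that $B_j \cap \iota_{i(j)}(A_{i(j)})$ is finite for some $i(j)$ (with $\iota_i$ the inclusion of the $i$-th factor), but then claim that forcing the single coordinate $t_{i(j)} - \sigma(t_{i(j)})$ outside that finite set guarantees $t - \sigma(t) \notin B_j$. This is false: the $i(j)$-th coordinate of a point of $B_j$ lies in the \emph{projection} of $B_j$ to $A_{i(j)}$, which can be all of $A_{i(j)}$, not in the intersection $B_j \cap \iota_{i(j)}(A_{i(j)})$. Take $k=2$, $A_1 = A_2 = E$, and $B_j = \Delta$ the diagonal: then $\Delta \cap (E \times \{0\}) = \{0\}$, yet whenever $t_1 - \sigma(t_1) = t_2 - \sigma(t_2) \neq 0$ one has $t - \sigma(t) \in \Delta$. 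So membership in $B_j$ cannot be detected one coordinate at a time, and your ``bookkeeping'' obstacle is not bookkeeping but a genuine structural problem.

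The paper's argument (which agrees with yours through the construction of the $t_i$ and the choice of $\sigma$ via Lemmas~\ref{lem:t1} and~\ref{lem:t2}) fixes this by passing to the quotient $C_j := A/B_j$, so that $t-\sigma(t)\notin B_j$ becomes $\pi_j(t)\notin C_j(L)^{\langle\sigma\rangle}$. For each $j$ one chooses a factor $A_{i_j}$ so that the composite $A_{i_j}\xrightarrow{\iota_{i_j}}\prod A_i\xrightarrow{\alpha}A\xrightarrow{\pi_j}C_j$ followed by projection to a simple quotient of $C_j$ is an isogeny; this transports the property $K(nt_{i_j})=M_{i_j}$ to the image $c_j\in C_j$, so $nc_j$ is never $\sigma$-fixed for $n>0$. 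Lemma~\ref{lem:t3} is then applied not to the differences $t_i-\sigma(t_i)$ but to the \emph{coefficient vector} $(n_1,\dots,n_k)\in\mathbb{Z}^k$ of a combination $t_0=\alpha(n_1t_1,\dots,n_kt_k)$, with $W_j\leq\mathbb{Z}^k$ the kernel of $\mathbb{Z}^k\to C_j(L)/C_j(L)^{\langle\sigma\rangle}$; these have infinite index by the previous sentence. The finite set $S$ is absorbed at the very end by a pigeonhole over the multiples $nt_0$.
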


\begin{proof}
Since ${\rm rk}(A_1(L)/A_1(K))>0$,
there exists $t_1\in A_1(L)$ with ${\rm ord}(t_1+A_1(K))=\infty$
(i.e.~the image of $t_1$ in $A_1(L)/A_1(K)$ is of infinite order).
Let $M_1=K(t_1)$ and assume without loss of generality (by Lemma \ref{lem:t2}) that $K(t_1)=K(nt_1)$ for every $n>0$.
Now similarly there exists $t_2\in A_2(L)$ with ${\rm ord}(t_2+A_2(M_1))=\infty$.
Let $M_2=K(t_2)$ and assume without loss of generality that $K(t_2)=K(nt_2)$ for every $n>0$.
Iterate this to obtain
for every $1\leq i\leq k$
a $t_i\in A_i(L)$ with $M_i:=K(t_i)=K(nt_i)$ for every $n>0$
and
${\rm ord}(t_i+A_i(M_1\cdots M_{i-1}))=\infty$,
in particular $M_i\not\subseteq M_1\cdots M_{i-1}$.
Since $L/K$ is abelian, $M_i/K$ is Galois for every $i$.
By Lemma \ref{lem:t1} there exists $\sigma\in{\rm Gal}(L/K)$
with $\sigma(t_i)\neq t_i$ for every $i$.

Let $\iota_i\colon A_i\rightarrow A_1\times\dots\times A_k$ denote the canonical embedding setting the other components to $0$,
and for $j=1,\dots,r$ let $\pi_j\colon A\rightarrow C_j:=A/B_j$
denote the quotient map.
Any nonzero simple quotient of $C_j$ is again isogeneous to some $A_{i_j}$ and we can choose the $i_j$ such that the composition
$$
 \beta_j\colon A_{i_j} \stackrel{\iota_{i_j}}\longrightarrow A_1\times\dots\times A_k\stackrel{\alpha}{\longrightarrow} A\stackrel{\pi_j}{\longrightarrow} C_j\longrightarrow A_{i_j}
$$
is nonzero, hence surjective (as $A_{i_j}$ is simple), hence an isogeny.
Thus $t_{i_j}':=\beta_j(t_{i_j})$ 
again satisfies $K(nt_{i_j}')=K(t_{i_j})$ for every $n>0$
(due to the isogeny $\beta_j'$ from Lemma \ref{lem:av_dual_isog}
which maps $t_{i_j}'$ to ${\rm deg}(\beta_j)t_{i_j}$), 
and therefore also the image $c_j:=\pi_j(\alpha(\iota_{i_j}(t_{i_j})))$ of $t_{i_j}$ in $C_j$
satisfies $K(n c_j)=K(t_{i_j})=M_{i_j}$ for every $n>0$.
In particular, $\sigma(n c_j)\neq n c_j$ for every $n>0$.

Let $C_j(L)^{\left<\sigma\right>}\leq C_j(L)$ be the subgroup fixed by $\sigma$,
and write $u_{j,i}=\pi_j(\alpha(\iota_i(t_i)))$.
The map 
$$
 \begin{cases}\quad\quad\mathbb{Z}^k&\longrightarrow\quad C_j(L)/C_j(L)^{\left<\sigma\right>},\\
(n_1,\dots,n_k)&\longmapsto\quad\sum_{i=1}^k n_iu_{j,i}+C_j(L)^{\left<\sigma\right>}
\end{cases}
$$
is a homomorphism with infinite image
(since $nu_{j,i_j}=nc_{j}\notin C_j(L)^{\left<\sigma\right>}$ for every $n>0$),
hence its kernel 
$$
 W_j=\left\{(n_1,\dots,n_k)\in\mathbb{Z}^k:\sigma\left(\sum\nolimits_{i=1}^k n_iu_{j,i}\right)=\sum\nolimits_{i=1}^k n_iu_{j,i}\right\}
$$
has infinite index in $\mathbb{Z}^k$.
By Lemma \ref{lem:t3},
there exists $x=(n_1,\dots,n_k)\in\mathbb{Z}^k$ such that 
$nx\notin\bigcup_{j=1}^rW_j$ for every $n>0$.
That is,
if we write
$$
 t_0:=\alpha(n_1t_1,\dots,n_kt_k)=\sum_{i=1}^kn_i\alpha(\iota_i(t_i))\in A(L),
$$
then $\sigma(n\pi_j(t_0))\neq n\pi_j(t_0)$
for every $n>0$ and every $j$.

We claim that for every $j$,
$nt_0-\sigma(nt_0)\in S+B_j(\bar{K})$ for at most finitely many $n$.
Indeed, for any such $n$,
$\pi_j(nt_0-\sigma(nt_0))$ lies in the finite set $\pi_j(S)$
so if there were infinitely many such $n$,
by the box principle there exist $n''>n'>0$ with
$\pi_j(n't_0-\sigma(n't_0))=
\pi_j(n''t_0-\sigma(n''t_0))$,
hence $n:=n''-n'>0$ satisfies
$0=\pi_j(nt_0-\sigma(nt_0))=n\pi_j(t_0)-\sigma(n\pi_j(t_0))$,
contradiction.
Therefore
there exists (cofinitely many) $n_0>0$  such that
$n_0t_0-\sigma(n_0t_0)\notin S+\bigcup_{j=1}^rB_j(\bar{K})$,
i.e.~the point $t:=n_0t_0\in A(L)$ satisfies the 
claim of the proposition.
\end{proof}

\begin{remark}\label{rem:rankcondition}
\begin{enumerate}[(a)]
\item
As indicated above, 
the condition that
${\rm rk}(A_i(L)/A_i(K_1))=\infty$
for every finite subextension $K_1/K$ of $L/K$
is satisfied if $K$ is finitely generated 
and ${\rm rk}(A_i(L))=\infty$,
since in this case also $K_1$ is finitely generated
and therefore $A_i(K_1)$ is finitely generated
by the N\'eron--Mordell--Weil theorem \cite[Ch.~6 Thm.~1]{Lang_fundamentals}.
\item
The condition is also satisfied
if $K$ is ample (see introduction) and $L/K$ is any infinite extension:
Let $L_1$ be a proper finite extension of $K_1$ contained in $L$.
The restriction of scalars $R={\rm Res}_{L_1/K_1}((A_i)_{L_1})$
is an abelian variety over $K_1$
and we have the diagonal embedding
$\Delta\colon (A_i)_{K_1}\rightarrow R$,
hence $R$ is isogeneous to $(A_i)_{K_1}\times B$ for an abelian variety $B$ over $K_1$,
which is nonzero since 
${\rm dim}(R)=[L_1:K_1]\cdot{\rm dim}(A_i)>{\rm dim}(A_i)$,
hence 
$$
 {\rm rk}(A_i(L)/A_i(K_1))\geq {\rm rk}(A_i(L_1)/A_i(K_1)) = {\rm rk}(R(K_1)/\Delta(A_i(K_1))) 
 = {\rm rk}(B(K_1)) = \infty
$$
by \cite[Theorem 1.2]{FehmPetersen}.
\end{enumerate}
\end{remark}

The final ingredient we need for our proof of Theorem \ref{thm:intro_abelian}
is the following purely group theoretic lemma
regarding wreath products (cf.~Definition \ref{def:wr}).
For a group $H$ we denote by 
$H'=[H,H]$ the commutator subgroup.

\begin{lemma}\label{lem:gt_abelian}
Let $\Gamma$ and $G$ be groups
and
let $H\leq \Gamma\wr G$ such that ${\rm pr}(H)=G$ and  $H\cap \Gamma^G$ surjects onto $\Gamma^{\{1,g\}}$ for some $1\neq g\in G$ under the restriction map
$f\mapsto f|_{\{1,g\}}$.
Then for every $H'\leq N\leq \Gamma\wr G$ 
we have that $N \cap \Gamma^G$ surjects onto $\Gamma^{\{1\}}$ under the restriction map
$f\mapsto f|_{\{1\}}$.
\end{lemma}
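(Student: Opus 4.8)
The plan is to first use the hypothesis $H'\le N$ to reduce to the case $N=H'$: it suffices to prove that $H'\cap\Gamma^G$ surjects onto $\Gamma^{\{1\}}$ under $f\mapsto f|_{\{1\}}$, since then automatically $N\cap\Gamma^G\supseteq H'\cap\Gamma^G$ does as well. I would then fix an arbitrary $\gamma\in\Gamma$ and aim to produce an element of $H'\cap\Gamma^G$ taking the value $\gamma$ at $1$. The inputs I would use are the standard structural facts about $\Gamma\wr G$: that $\Gamma^G$ is normal with quotient map $\mathrm{pr}$, and that conjugation by $h\in\Gamma\wr G$ with $\mathrm{pr}(h)=p$ restricts to an automorphism $\varphi_h$ of $\Gamma^G$ of the shape $\varphi_h(f)(x)=b(x)\,f(p^{-1}x)\,b(x)^{-1}$, where $b\in\Gamma^G$ is the $\Gamma^G$-component of $h$; in particular $\varphi_h(f)(1)=b(1)\,f(p^{-1})\,b(1)^{-1}$.

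Given $\gamma$, I would proceed as follows. Using $\mathrm{pr}(H)=G$, pick $h\in H$ with $\mathrm{pr}(h)=g^{-1}$, so that $\varphi_h(f)(1)=b(1)\,f(g)\,b(1)^{-1}$ for all $f\in\Gamma^G$. Using that $H\cap\Gamma^G$ surjects onto $\Gamma^{\{1,g\}}$ (legitimate since $g\neq1$, so $1$ and $g$ are distinct), pick $f\in H\cap\Gamma^G$ with $f(1)=\gamma^{-1}$ and $f(g)=1$. Then set $c:=[h,f]=hfh^{-1}f^{-1}$. Since $h,f\in H$ we get $c\in H'$, and since $\mathrm{pr}(c)=[\mathrm{pr}(h),\mathrm{pr}(f)]=[g^{-1},1]=1$ we get $c\in\Gamma^G$; thus $c\in H'\cap\Gamma^G\subseteq N\cap\Gamma^G$. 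Writing $c=\varphi_h(f)\cdot f^{-1}$, a product inside $\Gamma^G$ and hence computed pointwise, and evaluating at $1$:
\[
 c(1)=\varphi_h(f)(1)\cdot f(1)^{-1}=b(1)\,f(g)\,b(1)^{-1}\,f(1)^{-1}=b(1)\cdot 1\cdot b(1)^{-1}\cdot(\gamma^{-1})^{-1}=\gamma.
\]
Since $\gamma$ was arbitrary, this shows $N\cap\Gamma^G$ surjects onto $\Gamma^{\{1\}}$, which is the assertion.

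The step I expect to need the most care is purely bookkeeping: fixing a convention for the semidirect product $\Gamma^G\rtimes G$, writing $h=(b,p)$ in coordinates, and checking the precise form of the conjugation automorphism $\varphi_h$ — in particular whether the argument gets shifted by $p$ or by $p^{-1}$, which is what dictates the choice $\mathrm{pr}(h)=g^{-1}$ versus $\mathrm{pr}(h)=g$. Once that is settled everything else is immediate: one never even needs to determine the element $b(1)$, because forcing $f(g)=1$ kills the $b(1)$-conjugated term in $c(1)$, leaving only $f(1)^{-1}=\gamma$. The remaining verifications — that $[h,f]$ lies in $H'$, that it lies in $\Gamma^G$, and that an $f\in H\cap\Gamma^G$ with the two prescribed values exists — all follow directly from the hypotheses $h,f\in H$, $H'\le N$, $\mathrm{pr}(H)=G$, and the surjectivity of $H\cap\Gamma^G$ onto $\Gamma^{\{1,g\}}$.
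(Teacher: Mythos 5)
Your argument is essentially identical to the paper's: both pick $h\in H$ with $\mathrm{pr}(h)=g^{-1}$, pick $\eta\in H\cap\Gamma^G$ with $\eta(1)=\gamma^{-1}$ and $\eta(g)=1$, form the commutator $[h,\eta]\in H'\cap\Gamma^G$, and evaluate at $1$, exploiting $\eta(g)=1$ to eliminate the conjugating term so the answer is $\gamma$ independent of the $\Gamma^G$-component of $h$. The conventions for the semidirect product and the conjugation formula you wrote down agree with Definition~\ref{def:wr}, so the computation is sound.
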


\begin{proof}
Let $\gamma\in \Gamma$. 
It suffices to find $\nu \in N\cap \Gamma^G$ such that $\nu(1)=\gamma$. 
Since ${\rm pr}(H)=G$,
there exists $h = \rho g^{-1}\in H$ such that ${\rm pr}(h)=g^{-1}$ and $\rho = hg\in \Gamma^G$. 
Since $H\cap\Gamma^G$ surjects onto $\Gamma^{\{1,g\}}$, 
there exists $\eta\in H\cap\Gamma^G$ such that
\[
    \eta(1) = \gamma^{-1} \quad \mbox{and} \quad \eta(g) = 1.
\]
Since $h,\eta\in H$ we have that $\nu:= [h,\eta] \in H'\leq N$,
and since $\eta\in \Gamma^G\unlhd \Gamma\wr G$ we have that
\[
    \nu = [h,\eta] = h\eta h^{-1}\eta^{-1} = \rho \eta^{g} \rho^{-1} \eta^{-1} \in \Gamma^G.
\]
This finishes the proof since
\[
   \nu(1) =  \rho(1) \eta(g) \rho^{-1}(1) \eta^{-1}(1) = \rho(1)\cdot 1 \cdot\rho(1)^{-1}\cdot \gamma =\gamma. \qedhere
\]
\end{proof}

\begin{proof}[Proof of Theorem \ref{thm:intro_abelian}]
There exists
an isogeny $\beta\colon\prod_{i=1}^kA_i\rightarrow A_L$
with $A_i$ simple abelian varieties over $L$,
see e.g.~\cite[Prop. 12.1]{Mil86} and the text below that proposition.
We let $\beta'\colon A_L\rightarrow\prod_{i=1}^kA_i$
denote the isogeny with $\beta\circ\beta'=[{\rm deg}(\beta)]$
(Lemma \ref{lem:av_dual_isog}).
Replacing $K$ by a finite extension inside $L$
we may assume without loss of generality that $A_i$, $\beta$, $\beta'$ all descend to $K$.
Moreover, since each $A_i$ is a nonzero homomorphic image of $A_L$ (via $\beta'$),
by the assumption ${\rm rk}(A_i(L))=\infty$
we can assume without loss of generality
that ${\rm rk}(A_i(K))>0$ for every $i$,
which implies (via $\beta$) that $A(K)$ is Zariski-dense in $A$.
We also note that the assumption of Proposition
\ref{prop:t} is satisfied for $A$ and the extension $L/K$,
see Remark \ref{rem:rankcondition}(a).

Let
$U\subseteq A$ be nonempty and open, 
let $K'/K$ be a finite Galois extension
linearly disjoint from $L/K$,
let $\alpha\colon A'\rightarrow A_{K'}$ be an isogeny,
and let $(\pi_i\colon Y_i\rightarrow A')_{i=1}^n$
be a finite collection of fully ramified geometrically integral Galois covers.
Let $L':=K'L$, note that $L'/K'$ is again abelian,
and let $X_1',\ldots, X_s'$ be the irreducible components of 
$\bigcup_{i=1}^n{\rm Branch}(\pi_i)$.
By Lemma \ref{lem:red_to_irred}, it suffices to find
a finite Galois extension $K''/K$ containing $K'$ and
\begin{equation}\label{eqn:goal}
x'\in A'(K''L)\cap\alpha^{-1}(U(L))\;
\mbox{ with }\;(\pi_i)_{K''L}^{-1}(x')\mbox{ irreducible for every }i.
\end{equation}
The freedom to choose $K''$
allows us, using Lemma \ref{lem:fiber_products}(e), to freely replace
$K$ by a finite extension $K^*$ of $K$ inside $L$ (and accordingly $K'$ by $K^*K'$),
so 
since ${\rm Branch}((\pi_i)_{K^*K'})={\rm Branch}(\pi_i)_{K^*K'}$ 
by Lemma~\ref{lem:ramparts}(c), 
assume without loss of generality
that each $(X_\mu')_{L'}$ is irreducible.
Let $m={\rm deg}(\alpha)$
and let $\alpha'\colon A_{K'}\rightarrow A'$ be the isogeny
with $\alpha\circ\alpha'=[m]_{A_{K'}}$  (Lemma \ref{lem:av_dual_isog}).

Let 
$u\colon A_{K'}\rightarrow A$ be the base change,
note that ${\rm Branch}(u\circ\alpha\circ\pi_i)=u(\alpha({\rm Branch}(\pi_i)))$ by Lemma~\ref{lem:ramparts}(d),
and let
$X_\mu := u(\alpha(X_\mu'))$.
By Lemma \ref{lem:Branch}(b), each $X_\mu'$ is a proper closed subset of $A'$,
hence $X_\mu$ is a proper closed subset of $A$. 
For each $1\leq\mu,\nu\leq s$,
$$
 S_{\mu,\nu}:=\{a\in A(\bar{K}):a+X_\mu(\bar{K})=X_\nu(\bar{K})\}
$$
is either empty or a coset of the stabilizer ${\rm Stab}_A(X_\nu)(\bar{K})=S_{\nu,\nu}$, 
where ${\rm Stab}_A(X_\nu)$ is a smooth closed subgroup scheme of $A$ (see Lemma \ref{lem_stab}). 
Since $\emptyset\neq X_\nu\subsetneqq A$,
we have that ${\rm Stab}_A(X_\nu)\neq A$. 
Therefore, $\bigcup_{\mu,\nu}S_{\mu,\nu}$
is contained in a set of the form
$S+\bigcup_{j=1}^rB_j(\bar{K})$
with $S\subseteq A(\bar{K})$ finite and
the $B_j$ proper abelian subvarieties of $A$.
Without loss of generality, $0_A\in S$.
Proposition \ref{prop:t} gives $t\in A(L)$ and $\hat{\sigma}\in{\rm Gal}(L/K)$
with 
$$
 t-\hat{\sigma}(t)\notin [m]^{-1}S+\bigcup_{j=1}^rB_j(\bar{K}) = [m]^{-1}\left(S+\bigcup_{j=1}^rB_j(\bar{K})\right),
$$ 
in particular $mt-\hat{\sigma}(mt)\not\in \bigcup_{\mu,\nu}S_{\mu,\nu}$.
Let $M=K(t)$, $M'=K'M$, $\sigma=\hat{\sigma}|_M$,
and note that $M/K$ is a Galois extension (since $L/K$ is abelian)
and that
$$
 \sigma\in G:={\rm Gal}(M/K)={\rm Gal}(M'/K').
$$
$$
 \xymatrix{
  K'\ar@{-}[r]^G\ar@{-}[d] & M'\ar@{-}[r]\ar@{-}[d] & L'\ar@{-}[d]\\
  K\ar@{-}[r]^G & M\ar@{-}[r] & L
 }
$$
Let $t':=\alpha'(t)\in A'(M')$.
For $g\in G$,
$$
 \pi_{i,g}:=\tau_{g(t')}\circ(\pi_i)_{M'}\colon Y_{i,g}=(Y_i)_{M'}\rightarrow A'_{M'}
$$ 
is a fully ramified geometrically integral Galois cover of $M'$-varieties
(Lemma \ref{lem:fiber_products}(e)).
Note that $\pi_{i,g}=(\pi_{i,1})^g$,
which induces a natural isomorphism
$\Gamma_i:={\rm Gal}(Y_{i,1}/A'_{M'})\rightarrow {\rm Gal}(Y_{i,g}/A'_{M'})$
(Remark~\ref{rem:gX}).
Let $W_i$ be the normalization of $A'_{M'}$
in the field compositum $\prod_{g\in G}M'(Y_{i,g})$.
By Lemma~\ref{lem:fiber_product_galois},
$\delta_i\colon W_i\rightarrow A'_{M'}\rightarrow A'$ is
a Galois cover
whose Galois group 
$H_i$ embeds into $\Gamma_i\wr G$ 
such that,
after identifying $H_i$ with its image in $\Gamma_i\wr G$,
the restriction of the maps $\mathrm{pr}$ and $e_g$ 
to $H_i$ respectively $H_i\cap\Gamma_i^G$
coincide with the restriction maps $H_i\to G$ respectively $\Gal(W_i/A'_{M'})\to \Gal(Y_{i,g}/A'_{M'})=\Gamma_i$.

For $g\in G$, using Lemma \ref{lem:ramparts}(d) we see that
$$
 {\rm Branch}(\pi_{i,g})={\rm Branch}(\tau_{g(t')}\circ(\pi_i)_{M'})={\rm Branch}(\pi_i)_{M'}+g(t')
$$
has irreducible components
among the $(X_\mu')_{M'}+g(t')$, $\mu = 1,\ldots, s$.
Since $K'$ and $M$ are linearly disjoint over $K$,
and $u_M$ is the base change morphism $A_{M'}\rightarrow A_M$,
using Remark \ref{rem:closed} we obtain 
\begin{eqnarray*}
    (u_M\circ\alpha_{M'})\big( (X'_{\mu})_{M'} + g(t')\big) 
    &=& u_M\big( \alpha_{M'}((X'_{\mu})_{M'}) + \alpha_{M'}(g(t'))\big) \\ 
    &=& u_M\big( \alpha_{M'}((X'_{\mu})_{M'}) + g(mt) \big)\\
    &=&u_M\big( \alpha_{M'}((X'_{\mu})_{M'})\big) + u_M(g(mt)) \\ 
    &=& (u(\alpha(X'_{\mu})))_M+ g(mt) \\
    &=& (X_{\mu})_M +g(mt).
\end{eqnarray*}
From this, it follows that 
${\rm Branch}( \pi_{i,1})$ and ${\rm Branch}( \pi_{i,\sigma})$
have no common component. Indeed, otherwise 
there exist $1\leq \mu,\nu\leq s$ with
$(X_\mu')_{M'}+t'=(X_\nu')_{M'}+\sigma(t')$, and thus by the above computation, $(X_\mu)_M + mt = (X_{\nu})_M +\sigma(mt)$, and so $mt-\sigma(mt)\in S_{\mu,\nu}$, a contradiction.

Therefore, by Lemma \ref{lem:lin_disj_fully_ram} (with $n=2$)
the normalization of $A'_{M'}$ in the compositum
$R_i:=M'(Y_{i,1})M'(Y_{i,\sigma})$
is a fully ramified geometrically integral Galois cover
$\rho_i\colon Z_i\rightarrow A'_{M'}$ with Galois group
$$
 {\rm Gal}(Z_i/A'_{M'})={\rm Gal}(Y_{i,1}/A'_{M'})\times{\rm Gal}(Y_{i,\sigma}/A'_{M'})=\Gamma_i^{\{1,\sigma\}}\leq\Gamma_i^G.
$$
The inclusions $R_i\subseteq\prod_{g\in G}M'(Y_{i,g})$,
$M'(Y_{i,1})\subseteq R_i$ and $M'(Y_{i,\sigma})\subseteq R_i$
induce covers $W_i\rightarrow Z_i$, $Z_i\rightarrow Y_{i,1}$ and $Z_i\rightarrow Y_{i,\sigma}$
such that the following diagram commutes:
$$
 \xymatrix{
    &&&&&&&W_i\ar[dddllll]^{H_i}_{\delta_i}\ar[d]\\
    &&&&&&& Z_i\ar[dl]\ar[dd]^{\Gamma_i^{\{1,\sigma\}}}_{\rho_i}\ar[dr] \\
    &A_{K'}\ar[d]_{[m]}\ar[drr]^{\alpha'}& &Y_i\ar[d]_{\pi_i}^{\Gamma_i}&&& Y_{i,1}\ar[dr]^{\pi_{i,1}} && Y_{i,\sigma}\ar[dl]_{\pi_{i,\sigma}} \\ 
   A&A_{K'}\ar[l]^u&&A'\ar[ll]^\alpha &&&& A'_{M'}\ar[llll]_G
 }
$$
Let $\Omega:=A(K)$ and
$$
 \Omega':=\alpha'( \Omega )\subseteq A'(K')\subseteq A'({M'}).
$$ 
Since $\Omega$ is a dense subgroup of $A$
and $\alpha'$ is an isogeny, $\Omega'$ is a dense 
subgroup of $A'$.
Thus by 
\cite[Theorem 1.4]{CDJLZ} applied to $A'_{M'}$ and $\Omega'$,
there exists 
a finite index coset $C'\subseteq\Omega'$ such that
for every $c'\in C'$ each of the fibers $\rho_i^{-1}(c')$ is integral.
Then $C:=\Omega\cap\alpha'^{-1}(C')$ is a finite index coset of $\Omega$,
hence Zariski-dense in $A$,
and so we can pick
$c\in C\cap\tau_{t}([m]^{-1}(U))$
and let $c':=\alpha'(c)$.
Without loss of generality, each $\delta_i$ is unramified over $c'$.

We claim that 
$$
 x':=c'-t'\in A'(M')\subseteq A'(L')
$$ 
is as required by (\ref{eqn:goal}).
First of all note that indeed
$$
 \alpha(x')=\alpha(\alpha'(c)-\alpha'(t))=m\cdot(c-t)\in U(M)\subseteq U(L).
$$
Then choose $w_i\in W_i(\bar{K})$ with $\delta_i(w_i)=c'$
and let $z_i\in Z_i(\bar{K})$ be the image of $w_i$ in $Z_i$.
Then $\rho_i(z_i)=c'$,
so $z_i$ is the unique point in the fiber $\rho_i^{-1}(c')$.
Thus, if $D_i:=D(w_i/c')\leq H_i$ is the decomposition group of $w_i$ over $c'\in A'(K')$,
then the image of $D_i$ under ${\rm pr}\colon\Gamma_i\wr G\rightarrow G$ is all of $G$
(Lemma \ref{lem:decomposition_group}(d,b)), 
and $D_i\cap\Gamma_i^G$ surjects onto $\Gamma_i^{\{1,\sigma\}}$
(Lemma \ref{lem:decomposition_group}(c,b)).
Let $F_i=M'(w_i)$ be the residue field of $w_i$,
and $F_{i,1}$ the residue field of the image of $z_i$ in $Y_{i,1}$,
so that ${\rm Spec}(F_{i,1})=\pi_{i,1}^{-1}(c')$.
By 
Lemma \ref{lem:decomposition_group}(a,d)
we can identify
${\rm Gal}(F_i/K')= D_i\leq\Gamma_i\wr G$
and
${\rm Gal}(F_{i,1}/M')=\Gamma_i^{\{1\}}$
such that
the restrictions of ${\rm pr}$ and $e_1$ to $D_i$ respectively $D_i\cap\Gamma_i^G$
correspond to the restriction maps
${\rm Gal}(F_i/K')\rightarrow{\rm Gal}(M'/K')$
respectively ${\rm Gal}(F_i/M')\rightarrow{\rm Gal}(F_{i,1}/M')$.
As $L'/K'$ is abelian, 
$N_i:={\rm Gal}(F_i/F_i\cap L')$ contains
the commutator subgroup ${\rm Gal}(F_i/K')'=D_i'$.
Thus by Lemma~\ref{lem:gt_abelian}, $N_i\cap\Gamma_i^G$ surjects onto $\Gamma_i^{\{1\}}$,
which means that $F_{i,1}$
is linearly disjoint from $L'$ over $M'$,
hence
$$
 (\pi_i)_{L'}^{-1}(x') =
 (\pi_{i,1})_{L'}^{-1}(x'+t')=
 (\pi_{i,1})_{L'}^{-1}(c')=
 \pi_{i,1}^{-1}(c')\times_{{\rm Spec}(M')}{\rm Spec}(L')=
 {\rm Spec}(F_{i,1}\otimes_{M'}L')
$$
is irreducible.
\end{proof}

\begin{proof}[Proof of Corollary \ref{cor:intro_Frey_Jarden}]
This follows from Theorem \ref{thm:intro_abelian}
and Lemma \ref{lem:rank_res}.
\end{proof}

\begin{proof}[Proof of Corollary \ref{cor:intro_E}]
This follows from Theorem \ref{thm:intro_abelian},
Proposition \ref{prop:FreyJarden} and
Remark \ref{lem:rank}.
\end{proof}

In fact, we obtain the following generalization of Corollary \ref{cor:intro_E}:

\begin{corollary}\label{cor:hyper}
Let $A$ be a geometrically simple abelian variety over a finitely generated field $K$ of characteristic zero. 
Assume there exists a geometrically integral Galois cover $C\to \mathbb{P}^1_K$ with abelian Galois group $\mathrm{Gal}(C/\mathbb{P}^1)$ and an epimorphism $J_C\to A$, where $J_C$ denotes the Jacobian variety of $C$. 
Then $A_{K^{\rm ab}}$ has WHP.
\end{corollary}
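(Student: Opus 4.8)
The plan is to derive Corollary~\ref{cor:hyper} from Theorem~\ref{thm:intro_abelian} by verifying, for $L=K^{\rm ab}$, that every nonzero homomorphic image $A_0$ of $A_{K^{\rm ab}}$ has infinite rank over $K^{\rm ab}$. Since $A$ is \emph{geometrically} simple, so is $A_{K^{\rm ab}}$, and hence the only nonzero homomorphic images of $A_{K^{\rm ab}}$ are isogenous to $A_{K^{\rm ab}}$ itself; by Remark~\ref{lem:rank} it therefore suffices to prove $\mathrm{rk}(A(K^{\rm ab}))=\infty$. Here I would invoke Lemma~\ref{lem:rank_res} (with $K\subseteq M=K^{\rm ab}\subseteq L=K^{\rm ab}$): it is enough to show that \emph{every} nonzero abelian variety over $K$ has infinite rank over $K^{\rm ab}$ — but that is exactly the statement of the Frey--Jarden conjecture, which is not known in general. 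So instead I would exploit the extra hypothesis to reduce to the \emph{one} abelian variety $A$ we care about.

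The key step is the curve $C$. We are given a geometrically integral Galois cover $h\colon C\to\mathbb{P}^1_K$ with $\mathrm{Gal}(C/\mathbb{P}^1)$ abelian and an epimorphism $\varphi\colon J_C\to A$. First I would reduce to the case where $C\to\mathbb{P}^1$ is ramified over a rational point and has a rational point: after a finite extension of $K$ inside $K^{\rm ab}$ (harmless, since $K^{\rm ab}$ absorbs it and, by Lemma~\ref{lem:rank_res} applied to this finite extension, it suffices to prove the rank statement over the larger base), we may assume $C(K)\neq\emptyset$, fix a base point, and get an Abel--Jacobi embedding $C\hookrightarrow J_C$. Then $A(K^{\rm ab})\supseteq\varphi(J_C(K^{\rm ab}))$ has rank at least $\mathrm{rk}(J_C(K^{\rm ab})) - \dim(\ker\varphi)\cdot(\text{torsion correction})$ — more precisely, since $\varphi$ has finite kernel on each simple factor it surjects onto, $\mathrm{rk}(A(K^{\rm ab}))\geq \mathrm{rk}(\mathrm{im}\,\varphi(J_C(K^{\rm ab})))$ and this is infinite as soon as $\mathrm{rk}(J_C(K^{\rm ab}))=\infty$ and $A$ appears as a factor; because $A$ is simple and $\varphi$ is onto, any infinite-rank subgroup of $J_C(K^{\rm ab})$ must have infinite image in $A(K^{\rm ab})$ up to the finite kernel. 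So it remains to prove $\mathrm{rk}(J_C(K^{\rm ab}))=\infty$.

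For the latter I would run the specialization/Frey--Jarden argument directly on $C$. Because $C\to\mathbb{P}^1$ is abelian and ramified over a $K$-rational point $b_0$, the function field $K(C)$ contains many radical elements, and after choosing a coordinate so that $b_0=0$, for all but finitely many rational specializations $a$ the fiber $h^{-1}(a)$ is a point defined over an abelian extension of $K(a)$ — hence, specializing $a$ through $K$, over $K^{\rm ab}$. This produces a sequence of points $P_i\in C(K^{\rm ab})\subseteq J_C(K^{\rm ab})$; the independence of infinitely many of them in $J_C(\bar{K})$ follows by the same height/valuation argument as in Proposition~\ref{prop:FreyJarden} (one uses that $K$ is finitely generated, so that local height machinery and the Néron--Mordell--Weil theorem apply, and that a genuinely ramified abelian cover forces the specialized points to have unbounded ``new'' ramification, preventing them from lying in a fixed finitely generated subgroup). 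This is where I would cite the proof of \cite[Theorem~2.2]{FreyJarden} as a template rather than reprove it. With $\mathrm{rk}(J_C(K^{\rm ab}))=\infty$ in hand, the chain $J_C\twoheadrightarrow A$ gives $\mathrm{rk}(A(K^{\rm ab}))=\infty$, the hypothesis of Theorem~\ref{thm:intro_abelian} holds for $L=K^{\rm ab}$, and WHP for $A_{K^{\rm ab}}$ follows.

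\textbf{Main obstacle.} The delicate point is the independence of the specialized points in $J_C(\bar K)$ — i.e.\ genuinely transplanting the Frey--Jarden mechanism from $\mathbb{G}_m$/elliptic curves to the Jacobian of an arbitrary abelian cover of $\mathbb{P}^1$. Everything else (geometric simplicity forcing the homomorphic-image condition to collapse to a single rank statement, descent along $\varphi$, the finite base extension bookkeeping) is formal. I would expect the cleanest route is to observe that an abelian ramified cover $C\to\mathbb{P}^1$ dominates some cyclic cover $Y^m = f(X)$, reduce to that cyclic case where the radical points $x_a=(a, f(a)^{1/m})$ are explicit, and then cite Frey--Jarden (or its later refinements in \cite{Petersen,ImLarsen}) for a cyclic cover in place of the hyperelliptic $Y^2=f(X)$ of Proposition~\ref{prop:FreyJarden}.
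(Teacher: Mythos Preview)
Your overall plan is right and matches the paper: use geometric simplicity of $A_{K^{\rm ab}}$ to reduce the hypothesis of Theorem~\ref{thm:intro_abelian} to the single statement $\mathrm{rk}(A(K^{\rm ab}))=\infty$, then apply the theorem. The paper, however, does not re-derive this rank statement at all but simply invokes \cite[Theorem~1.1]{Petersen}, which establishes $\mathrm{rk}(A(K^{\rm ab}))=\infty$ under precisely the hypotheses of the corollary (a geometrically simple quotient of the Jacobian of an abelian cover of $\mathbb{P}^1$ over a finitely generated field). You mention \cite{Petersen} at the very end as a source of ``later refinements'' of Frey--Jarden, but in fact it is exactly the black box needed here, so the paper's proof is two lines.

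Your sketch of a direct proof of the rank statement has a real gap in the passage from $J_C$ to $A$: the kernel of $\varphi\colon J_C\to A$ is in general a positive-dimensional abelian subvariety, not a finite group, so ``$\mathrm{rk}(J_C(K^{\rm ab}))=\infty$'' does not by itself imply ``$\mathrm{rk}(A(K^{\rm ab}))=\infty$'' --- all the rank could a priori sit in $\ker\varphi$. Your sentence about $\varphi$ having ``finite kernel on each simple factor it surjects onto'' does not fix this. To make the argument work you would have to show that the specific points you construct in $C(K^{\rm ab})\hookrightarrow J_C(K^{\rm ab})$ have linearly independent images \emph{in $A(K^{\rm ab})$}, not merely in $J_C(K^{\rm ab})$. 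That is exactly the content of \cite[Theorem~1.1]{Petersen}, and it is where the genuine work lies; once you cite it, everything else you wrote is superfluous.
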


\begin{proof} By \cite[Theorem 1.1]{Petersen},
$A$ has infinite rank over $K^{\mathrm{ab}}$,
hence since $A_{K^{\mathrm{ab}}}$ is simple,
so does every nonzero homomorphic image of $A_{K^{\mathrm{ab}}}$. 
Thus Theorem \ref{thm:intro_abelian} implies the assertion.
\end{proof}

\begin{example} 
Let $K$ be a finitely generated field of characteristic zero and
let $f\in K[X]$ be an irreducible polynomial of degree $n\ge 5$ 
with Galois group isomorphic to $S_n$ or $A_n$. 
Let $C$ be the smooth projective curve with affine equation $Y^2=f(X)$. 
Then $\mathrm{End}_{\bar{K}}((J_C)_{\bar{K}})=\mathbb{Z}$ by \cite[Theorem 2.1]{zarhin}. 
In particular, $J_C$ is geometrically simple. 
Moreover, $C$ is a geometrically integral Galois cover of $\mathbb{P}^1_K$ with Galois group isomorphic to $\mathbb{Z}/2\mathbb{Z}$. 
Corollary \ref{cor:hyper} implies that $(J_C)_{K^{\mathrm{ab}}}$ has WHP. 
\end{example}

\begin{remark}
Let $X$ and $Y$ be smooth proper $K$-varieties.
\cite[Corollary 3.4]{BFP} proves that if $X$ and $Y$ have HP,
then so does the product $X\times Y$.
If one would have a similar result for WHP, it would suffice
to prove Theorem \ref{thm:intro_abelian} in the special case that $A$ is simple, 
which would simplify the construction of the point $t$
(Proposition \ref{prop:t}) considerably.
In fact, \cite[Theorem 1.8]{CDJLZ} proves such a product theorem for WHP,
however only for $K$ finitely generated of characteristic zero,
which seems to be not sufficient here.
\end{remark}

\begin{remark}
By \cite[Proposition 3.5]{CDJLZ}, WHP is preserved under isogenies of abelian varieties, hence in the proof of Theorem \ref{thm:intro_abelian}
(and therefore in Proposition \ref{prop:t})
one could assume that
$\prod_{i=1}^kA_i\rightarrow A_L$
is the identity.
This however would not allow a significant simplification of the proof.
\end{remark}

\begin{remark}\label{rem:counterexample}
Let
$A$ be an abelian variety 
over a finitely generated field $K$ of characteristic zero,
and let
$(\pi_i \colon Y_i \to A)_{i=1}^n$ be
as in Definition \ref{def:WHP}.
The more precise form of Theorem \ref{thm:CZ}
proven in \cite[Theorem 1.3]{CDJLZ}
is that for every Zariski-dense subgroup $\Omega\leq A(K)$
there exists a finite index coset $C\subseteq\Omega$ 
disjoint from $\bigcup_{i=1}^n\pi_i(Y_i(K))$.
Such a statement however does in general not hold over $L=K^{\rm ab}$ instead of $K$:
Assume that $A(K)$ is Zariski-dense
and let $\pi\colon Y\rightarrow A$ be any ramified abelian cover of $A$.
(To see that such a cover exists, let for example $Y$ be the normalization of $A$
in the quadratic extension of
the function field $K(A)$
obtained by adjoining a square root of a rational function on $A$ whose principal divisor is not divisible by 2.)
Then $\Omega=A(K)$ is a Zariski-dense (finitely generated) subgroup of $A(L)$,
but $\pi^{-1}(\Omega)\subseteq Y(K^{\rm ab})$, so in fact
$\Omega\subseteq\pi(Y(L))$.
This also shows that if $X$ is any proper smooth  $K$-variety with $X(K)=X(K^{\rm ab})$, then $X_{K^{\rm ab}}$ does not have WHP:
If $\pi\colon Y\rightarrow X$ is any ramified abelian cover,
$\pi(Y(K^{\rm ab}))\supseteq X(K)=X(K^{\rm ab})$.
\end{remark}

\section{Torsion fields of abelian varieties}
\label{sec:tor}

\noindent
In this section we prove
Theorem \ref{thm:intro_E_tor}.
We denote by 
$\mu_n\subseteq\mathbb{C}$ the group of $n$-th roots of unity,
and by $\mathbb{P}$ the set of prime numbers.
For $p\in\mathbb{P}$ we 
denote by $v_p\colon\mathbb{Q}^\times\rightarrow\mathbb{Z}$
the $p$-adic valuation,
and we abbreviate
$\mu_{p^\infty}:=\bigcup_{k=1}^\infty\mu_{p^k}$
and
$A[p^\infty]:=\bigcup_{k=1}^\infty A[p^k]$.

\begin{lemma}\label{lem:tor_cycl}
Let
$p\in\mathbb{P}$ and
$A$ an abelian variety over a field $K$ of characteristic zero. Then
$K(\mu_{p^\infty})\subseteq K(A[p^\infty])$.
\end{lemma}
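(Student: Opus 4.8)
The plan is to deduce the inclusion from the Weil pairing, which ties the $p^k$-torsion of $A$ to that of the \emph{dual} abelian variety $A^{\vee}$, together with a comparison of the fields $K(A[p^\infty])$ and $K(A^{\vee}[p^\infty])$. We may assume $A\neq 0$, and write $g=\dim A$. As a first ingredient I would produce a \emph{$K$-rational} isogeny from $A$ to its dual: since $A$ is projective over $K$ (cf.~\cite{Mum70}) it carries an ample line bundle $\mathcal{L}$, and then the associated homomorphism $\varphi:=\varphi_{\mathcal{L}}\colon A\to A^{\vee}$ is an isogeny of abelian varieties over $K$, of some degree $d$. Write $d=p^a m$ with $p\nmid m$, and use Lemma~\ref{lem:av_dual_isog} to obtain an isogeny $\varphi'\colon A^{\vee}\to A$ over $K$ with $\varphi'\circ\varphi=[d]_A$.

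Next I would show that $K(A^{\vee}[p^\infty])\subseteq K(A[p^\infty])$. Fix $k\geq 1$, $\sigma\in{\rm Gal}(\bar{K}/K(A[p^\infty]))$ and $y\in A^{\vee}[p^k](\bar{K})$. Since $\varphi$ is surjective, $y=\varphi(x)$ for some $x\in A(\bar{K})$, and then $[d]_A x=\varphi'(\varphi(x))=\varphi'(y)$, so $p^k d\cdot x=\varphi'(p^k y)=0$, i.e.\ $x\in A[p^{k+a}m](\bar{K})=A[p^{k+a}](\bar{K})\oplus A[m](\bar{K})$. Writing $x=x_1+x_2$ according to this decomposition, we have $\varphi(x_1)\in A^{\vee}[p^{k+a}](\bar{K})$ and $\varphi(x_2)\in A^{\vee}[m](\bar{K})$, while $y=\varphi(x_1)+\varphi(x_2)$ is $p$-primary; comparing $p$-primary and $m$-primary parts forces $\varphi(x_2)=0$, hence $y=\varphi(x_1)$ with $x_1\in A[p^{k+a}](\bar{K})$. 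Now $\sigma$ fixes $A[p^\infty](\bar{K})$, in particular $\sigma(x_1)=x_1$, and since $\varphi$ is a morphism of $K$-varieties, $\sigma(y)=\varphi(\sigma(x_1))=\varphi(x_1)=y$. As $k$ and $y$ were arbitrary, $\sigma$ fixes $A^{\vee}[p^\infty](\bar{K})$ pointwise, which is exactly the asserted inclusion of fields.

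Finally I would invoke the Weil pairing: for every $k\geq 1$ there is a perfect, ${\rm Gal}(K)$-equivariant pairing $e_{p^k}\colon A[p^k]\times A^{\vee}[p^k]\to\mu_{p^k}$ (see \cite{Mum70}; cf.\ also \cite{Mil86}). Let $\sigma\in{\rm Gal}(\bar{K}/K(A[p^\infty]))$. By the previous step $\sigma$ fixes both $A[p^k](\bar{K})$ and $A^{\vee}[p^k](\bar{K})$ pointwise, so for all $x\in A[p^k](\bar{K})$ and $y\in A^{\vee}[p^k](\bar{K})$ equivariance gives $\sigma\bigl(e_{p^k}(x,y)\bigr)=e_{p^k}(\sigma x,\sigma y)=e_{p^k}(x,y)$; as $e_{p^k}$ is surjective, its values generate $\mu_{p^k}$, so $\sigma$ fixes $\mu_{p^k}$ pointwise. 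This holds for every $k$, so $\sigma$ fixes $\mu_{p^\infty}$ pointwise. Since $K(A[p^\infty])$ is the fixed field of ${\rm Gal}(\bar{K}/K(A[p^\infty]))$ in $\bar{K}$, it follows that $\mu_{p^\infty}\subseteq K(A[p^\infty])$, that is, $K(\mu_{p^\infty})\subseteq K(A[p^\infty])$.

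The step I expect to be the main obstacle is the middle one, comparing the two torsion fields: one cannot bypass $A^{\vee}$, since the Weil pairing only ever pairs $A$ against its dual, and the naive substitute of taking the top exterior power of the Tate module $T_pA$ would only show that the $g$-th power of the cyclotomic character is trivial on ${\rm Gal}(\bar{K}/K(A[p^\infty]))$ --- too weak. The key point in that step is that, although the isogeny $\varphi$ may have degree divisible by $p$, every $p^k$-torsion point of $A^{\vee}$ is still the $\varphi$-image of a $p^{k+a}$-torsion point of $A$, so no prime-to-$p$ torsion of $A$ is brought in. The remaining inputs --- projectivity of abelian varieties, the structure $A[n](\bar{K})\cong(\mathbb{Z}/n\mathbb{Z})^{2g}$, and the standard formal properties of the Weil pairing --- are routine.
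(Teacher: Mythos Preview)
Your proof is correct and follows essentially the same approach as the paper: pick a $K$-isogeny $\lambda\colon A\to A^\vee$, deduce $K(A^\vee[p^\infty])\subseteq K(A[p^\infty])$, and then use the Galois-equivariant Weil pairing $A[p^n]\times A^\vee[p^n]\to\mu_{p^n}$. The only difference is one of detail: the paper simply asserts $\lambda(A[p^\infty])=A^\vee[p^\infty]$ in one line, whereas your middle paragraph (splitting a preimage into its $p$-primary and prime-to-$p$ parts) supplies a full justification of exactly that equality --- so your argument is a fleshed-out version of theirs, not a different route.
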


\begin{proof}
Let $L=K(A[p^\infty])$
and choose an isogeny $\lambda\in \mathrm{Hom}_K(A, A^\vee)$, 
where $A^\vee$ is the dual of $A$ (see \cite[\S10]{Mil86} or \cite[Theorem 6.18]{vdGM}). 
Then $\lambda(A[p^\infty])=A^\vee[p^\infty]$,
hence $K(A^\vee[p^\infty])\subseteq L$. 
Since for every $n\in\mathbb{N}$ there is a
non-degenerate pairing of $\Gal(K)$-modules
$$ 
 e_n\colon A[p^n]\times A^\vee[p^n]\to \mu_{p^n},
$$
see \cite[\S16]{Mil86} or \cite[Def.~11.11]{vdGM}, 
and ${\rm Gal}(L)$ acts trivially on $A[p^n]\times A^\vee[p^n]$,
we conclude that $\mu_{p^n}\subseteq L$.
\end{proof}

\begin{lemma}\label{lem:values_of_pol}
For every $f\in\mathbb{Z}[X]$
which is not a square in $\mathbb{C}[X]$,
there exists a finite set
$P_f\subseteq\mathbb{P}$
such that for every 
finite set $P\subseteq\mathbb{P}$
there are infinitely many $x\in\mathbb{Z}$
with
\begin{enumerate}[{\rm(a)}]
    \item $p\nmid f(x)$ for every $p\in P\setminus  P_f$, and
   \item $v_p(f(x))\equiv 1\mod 2$
for some  $p\in\mathbb{P}\setminus P$.
\end{enumerate}
\end{lemma}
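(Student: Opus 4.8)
The plan is to reduce to the squarefree part of $f$ and then manufacture the required prime by a Hensel-type argument inside a suitable arithmetic progression. First I would factor $f$ over $\mathbb{Q}$ and collect the monic irreducible factors occurring with odd multiplicity; this collection is nonempty since $f$ is not a square in $\mathbb{C}[X]$, so by Gauss's lemma we obtain a factorization $f=c\cdot q\cdot s^2$ with $c\in\mathbb{Z}\setminus\{0\}$, $q,s\in\mathbb{Z}[X]$ primitive, $q$ separable, and $\deg q\ge1$. I would take $P_f$ to be the finite set of primes dividing $c$ together with the primes $\le\deg f$; for $p\notin P_f$ we then have $p\nmid c$, and, $qs$ being primitive of degree $\le\deg f<p$, there is $a_p\in\mathbb{Z}$ with $q(a_p)s(a_p)\not\equiv0\pmod p$. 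By separability of $q$ there are $u,v\in\mathbb{Z}[X]$ and $R\in\mathbb{Z}\setminus\{0\}$ with $uq+vq'=R$.

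Now fix a finite $P\subseteq\mathbb{P}$ and set $T=P\cup\{p:p\mid cR\}$. Since $\deg q\ge1$, the number of $n\in[1,N]$ with $q(n)\ne0$ is $\gg N$, so $q$ takes $\gg N^{1/\deg q}$ distinct nonzero values of absolute value $\ll N$, whereas there are only $O((\log N)^{|T|})$ positive integers $\le N$ all of whose prime factors lie in $T$; hence for large $N$ some nonzero value $q(n)$ has a prime factor $p^*\notin T$. Then $q$ has a root $x_0$ modulo $p^*$, and $u(x_0)q(x_0)+v(x_0)q'(x_0)=R\not\equiv0\pmod{p^*}$ (using $p^*\nmid R$) forces $q'(x_0)\not\equiv0\pmod{p^*}$. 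Expanding $q(x_0+tp^*)\equiv q(x_0)+tp^*q'(x_0)\pmod{(p^*)^2}$ and letting $t$ run through $\mathbb{Z}/p^*$, one finds $x_1\equiv x_0\pmod{p^*}$ with $q(x_1)\equiv p^*\pmod{(p^*)^2}$; then every $x\equiv x_1\pmod{(p^*)^2}$ has $v_{p^*}(q(x))=1$, so, since $p^*\nmid c$,
$$
 v_{p^*}(f(x))=v_{p^*}(c)+v_{p^*}(q(x))+2v_{p^*}(s(x))=1+2v_{p^*}(s(x))\equiv1\pmod 2,
$$
which is (b) with the prime $p^*\notin P$. Finally, $p^*\notin P$ makes $(p^*)^2$ coprime to every $p\in P\setminus P_f$, so the Chinese remainder theorem lets us solve $x\equiv x_1\pmod{(p^*)^2}$ together with $x\equiv a_p\pmod p$ for all $p\in P\setminus P_f$; the infinitely many $x$ in the resulting residue class modulo $(p^*)^2\prod_{p\in P\setminus P_f}p$ then satisfy $p\nmid c\,q(x)\,s(x)^2=f(x)$ for every $p\in P\setminus P_f$, which is (a).

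I expect the only real difficulty to be exactly this production of a prime outside $P$ dividing a value of $f$ to an \emph{odd} power, which is why one must pass to the separable factor $q$: a crude counting argument breaks down once $\deg q\ge2$ (the values of $q$ up to $N$ number only $\asymp N^{1/\deg q}$), and it is the simplicity of the roots of $q$ modulo almost every prime that lets a first-order Taylor expansion — equivalently Hensel's lemma — pin the $p^*$-adic valuation to exactly $1$ along an arithmetic progression, after which the even contribution $2v_{p^*}(s(x))$ of the square factor is harmless.
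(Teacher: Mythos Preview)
Your proof is correct and takes a genuinely different, more elementary route than the paper's. The paper defines $P_f$ as the set of primes modulo which $f$ vanishes identically, uses CRT to pass to an arithmetic progression $a+b\mathbb{Z}$ (with $b=\prod_{p\in P\setminus P_f}p$) so that (a) holds automatically, and then obtains (b) by applying Hilbert's irreducibility theorem to the $2^{|P|+1}$ polynomials $Z^2\pm\big(\prod_{p\in P_0}p\big)\cdot f(a+bY)$ for $P_0\subseteq P$: if every prime outside $P$ divided $f(x)$ to an even power, then $|f(x)|$ times a suitable $P$-supported squarefree integer would be a perfect square, contradicting HIT. Your argument instead extracts the separable part $q$ of $f$, finds a prime $p^*\notin P$ dividing some value of $q$ by comparing the $\gg N^{1/\deg q}$ distinct values of $q$ against the $O((\log N)^{|T|})$ many $T$-smooth integers, pins $v_{p^*}(q(x))$ to exactly $1$ along a residue class modulo $(p^*)^2$ via a one-step Hensel lift (this is where separability of $q$ is essential), and then merges with the congruences for (a) by CRT. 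The paper's proof is shorter once Hilbert irreducibility is granted; yours is self-contained and avoids that black box entirely, at the cost of the explicit factorization $f=c\,q\,s^2$ and a slightly more hands-on construction.
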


\begin{proof}
Since $f\neq 0$,
the set  $P_f$ of $p\in\mathbb{P}$
with $f(x)\equiv 0\mbox{ mod }p$ for all $x\in\mathbb{Z}$
is finite.
Let $P\subseteq\mathbb{P}$  finite.
By the Chinese remainder theorem there exists $a\in\mathbb{Z}$
with $f(a)\not\equiv0\mbox{ mod }p$ for every $p\in P\setminus  P_f$.
Let 
$b=\prod_{p\in P\setminus  P_f}p$
and $g(Y)=f(a+bY)$.
Then for every $y\in\mathbb{Z}$,
$g(y)\equiv f(a)\not\equiv 0\mbox{ mod }p$ for every $p\in P\setminus  P_f$, hence
$x=a+by$ satisfies (a).
The assumption on $f$ implies that $g$ is not a square in $\mathbb{C}[Y]$,
hence Hilbert's irreducibility theorem
in the form \cite[Theorem~3.4.4]{Serre} or 
\cite[Theorem~13.3.5]{FJ} applied to 
the $2^{|P|+1}$ polynomials
$$
 Z^2 \pm \left(\prod\nolimits_{p\in P_0}p\right)\cdot g(Y)\in\mathbb{Z}[Z,Y],\quad P_0\subseteq P
$$
gives
infinitely many $y\in\mathbb{Z}$
for which $c\cdot|g(y)|$ is not a perfect square for any $c=\prod_{p\in P_0}p$, $P_0\subseteq P$,
and then $x=a+by$ satisfies (b).
\end{proof}

\begin{lemma}\label{lem:gt_tor}
Let $\Gamma$ and $G$ be groups
and suppose 
${\rm pr}\colon\Gamma\wr G\rightarrow G$ factors as 
$$
 \Gamma\wr G\stackrel p\twoheadrightarrow H\stackrel{r}\twoheadrightarrow G.
$$ 
Let $H_1,H_2\unlhd H$
and write $G_i=r(H_i)$.
Assume that $G_1\neq 1$, $G_2\neq 1$, 
$G=G_1\times G_2$,
and $[H_1,H_2]= 1$.
Then ${\rm Ker}(p)\subseteq\Gamma^G$ surjects
onto $\Gamma$ under the
evaluation map $e_1$.
\end{lemma}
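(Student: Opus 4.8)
The plan is to exploit the factorization $\Gamma\wr G\stackrel{p}{\twoheadrightarrow}H\stackrel{r}{\twoheadrightarrow}H$ together with the commuting condition $[H_1,H_2]=1$ and the direct product decomposition $G=G_1\times G_2$ to produce, for an arbitrary $\gamma\in\Gamma$, an element of $\mathrm{Ker}(p)\subseteq\Gamma^G$ whose value at $1\in G$ is $\gamma$. First I would fix $\gamma\in\Gamma$ and pick, using $G_1=r(H_1)\neq 1$ and $G_2=r(H_2)\neq 1$, nontrivial elements $g_1\in G_1$ and $g_2\in G_2$; since $G=G_1\times G_2$, the elements $g_1,g_2$ commute in $G$ and $(g_1,g_2)\neq(1,1)$ in either coordinate. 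The point is that the composite $r\circ p=\mathrm{pr}$, so $H_i$ maps under $p^{-1}$ (or rather: elements of $\Gamma\wr G$ lying over $H_i$) to things whose $\mathrm{pr}$-image lies in $G_i$, and I can choose lifts $h_i\in\Gamma\wr G$ with $p(h_i)\in H_i$ and $\mathrm{pr}(h_i)=g_i$.

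Next I would compute the commutator $[h_1,h_2]\in\Gamma\wr G$. On one hand, $p([h_1,h_2])=[p(h_1),p(h_2)]\in[H_1,H_2]=1$, so $[h_1,h_2]\in\mathrm{Ker}(p)$; on the other hand, $\mathrm{pr}([h_1,h_2])=[\mathrm{pr}(h_1),\mathrm{pr}(h_2)]=[g_1,g_2]=1$, so $[h_1,h_2]\in\Gamma^G$. It remains to evaluate $e_1([h_1,h_2])$. Writing $h_i=(f_i,g_i)$ with $f_i\in\Gamma^G$, the wreath product multiplication gives $[h_1,h_2]=(f,1)$ where $f(g)=f_1(g)\,f_2(g_1^{-1}g)\,f_1(g_2^{-1}g)^{-1}\,f_2(g^{-1})$ — or more carefully, $f$ is a product of four translates of $f_1^{\pm 1}$ and $f_2^{\pm 1}$ evaluated at shifted arguments; at $g=1$ this becomes (up to getting the exact shifts right) $e_1([h_1,h_2])=f_1(1)f_2(g_1^{-1})f_1(g_2^{-1})^{-1}f_2(1)^{-1}$ or a similar four-term expression. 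This is a fixed element of $\Gamma$ determined by the chosen lifts, but it is \emph{not} obviously $\gamma$ — and this is where the freedom in choosing $h_1$ must be used.

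The main obstacle, as in Lemma~\ref{lem:gt_abelian}, will be arranging the lift $h_1$ (or $h_2$) so that the four-term commutator value at $1$ collapses to $\gamma$: the trick will be to choose $g_1,g_2$ so that the shifted arguments $g_1^{-1},g_2^{-1},g_1^{-1}g_2^{-1}$ are \emph{distinct} from $1$ and from each other (possible because $g_1\neq 1$, $g_2\neq 1$, and in $G=G_1\times G_2$ one has $g_1\neq g_2$ and $g_1^{-1}g_2^{-1}\neq 1$), and then to modify $h_1$ within its fiber over $p(h_1)\in H_1$ — i.e.\ multiply $f_1$ by an element of $\Gamma^G$ supported away from the relevant shifts but with the correct value at $1$ — so that $f_1(1)$ absorbs $\gamma$ while the other three terms cancel. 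I expect that after choosing $h_1,h_2$ suitably one simply gets $e_1([h_1,h_2])$ equal to a prescribed value of the form $f_1(1)f_1(g_2^{-1})^{-1}$ times constants, and then picks $f_1$ with $f_1(1)=\gamma$ and $f_1$ equal to the remaining constants elsewhere; the remaining check is the routine verification that the ``elsewhere'' values are consistent with $h_1$ lying over $H_1$, which holds because $p$ need only be respected, and one has complete freedom in $\mathrm{Ker}(p)$. Once $e_1([h_1,h_2])=\gamma$ is achieved for every $\gamma$, surjectivity of $e_1|_{\mathrm{Ker}(p)}$ follows.
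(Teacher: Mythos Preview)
Your commutator strategy is sound and, once repaired, actually gives a more self-contained proof than the paper's (which quotients by $\Gamma_0:=e_1(\mathrm{Ker}(p))$, reduces to $H=\bar\Gamma\wr G$, and then invokes \cite[Lemma 13.7.4(b)]{FJ}). But as written there is a genuine gap in how you obtain the freedom to adjust $f_1(1)$.

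You say you will ``modify $h_1$ within its fiber over $p(h_1)\in H_1$'' by multiplying $f_1$ by an element of $\Gamma^G$ with a prescribed value at $1$, and justify this by saying ``one has complete freedom in $\mathrm{Ker}(p)$''. This is circular: staying in the same $p$-fiber means multiplying by an element of $\mathrm{Ker}(p)$, and the value at $1$ of such an element lies in $e_1(\mathrm{Ker}(p))$ --- which is precisely the subgroup you are trying to prove equals $\Gamma$. If instead you merely multiply by an arbitrary $\phi\in\Gamma^G$, there is no reason for $p((\phi,1)h_1)$ to land in $H_1$.

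The missing ingredient is the hypothesis $H_1\unlhd H$, which you never use. From it, $p^{-1}(H_1)\unlhd\Gamma\wr G$, so for any $\phi\in\Gamma^G$ the \emph{conjugate} $(\phi,1)\,h_1\,(\phi,1)^{-1}=(\phi f_1(\phi^{g_1^{-1}})^{-1},\,g_1)$ still lies in $p^{-1}(H_1)$. Now the new $f_1$ satisfies $f_1(x)=\phi(x)f_1^{(0)}(x)\phi(g_1^{-1}x)^{-1}$, and since $1,\,g_1^{-1},\,g_2^{-1},\,g_1^{-1}g_2^{-1}$ are four distinct points of $G$ (here $G=G_1\times G_2$ with $g_i\in G_i\setminus\{1\}$ is essential), you may choose $\phi$ to vanish at $g_1^{-1},g_2^{-1},g_1^{-1}g_2^{-1}$ and take any value at $1$. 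A short computation then shows $e_1([h_1,h_2])=\phi(1)\cdot c$ for a fixed constant $c\in\Gamma$, so choosing $\phi(1)=\gamma c^{-1}$ yields $e_1([h_1,h_2])=\gamma$. With this correction your argument goes through; without it, the proof does not close.
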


\begin{proof}
Let $N={\rm Ker}(p)$, $\Gamma_0=e_1(N)$, and
$\bar{\Gamma}:=\Gamma/\Gamma_0$.
The map $\Gamma\wr G\rightarrow\bar{\Gamma}\wr G$, $(f,g)\mapsto(\bar{f},g)$ where
$\bar{f}(h)=f(h)\Gamma_0$ is an epimorphism whose kernel contains $N$,
so without loss of generality we can assume that $N$ equals this kernel,
and thus $H=\bar{\Gamma}\wr G$.
Applying
\cite[Lemma 13.7.4(b)]{FJ}
with $G_0=1$, $A=\bar{\Gamma}$ and $1\neq h_2\in H_2$
shows that $\bar{\Gamma}=1$.
\end{proof}

\begin{proof}[Proof of Theorem \ref{thm:intro_E_tor}]
For each $p\in\mathbb{P}$
let $L_p=\mathbb{Q}(A[p^\infty])$.
Then 
$\mathbb{Q}(\mu_{p^\infty})\subseteq L_p$ by Lemma~\ref{lem:tor_cycl},
and $L:=\prod_{p\in\mathbb{P}}L_p=\mathbb{Q}(A_{\rm tor})$.
In particular, $\mathbb{Q}^{\rm ab}\subseteq L$ by the Kronecker--Weber theorem,
hence ${\rm rk}(E(L))=\infty$
by Proposition \ref{prop:FreyJarden}.
Like in that proposition,
choose an affine model 
$$ 
 E_0 : Y^2=f(X), \quad f\in\mathbb{Z}[X]
$$
of $E$,
and for $a\in\mathbb{Q}$ let $x_a\in E_0(\sqrt{f(a)})$
be the corresponding point.

Let $K$ be a number field contained in $L$,
$U\subseteq E_K$ nonempty and open, 
$K'/K$ a finite Galois extension
linearly disjoint from $L/K$,
$\alpha\colon E'\rightarrow E_{K'}$ an isogeny,
and $(\pi_i\colon Y_i\rightarrow E')_{i=1}^n$
a finite collection of fully ramified geometrically integral Galois covers
of $K'$-varieties.
By Lemma \ref{lem:red_to_irred} it suffices to find
a finite Galois extension $K''/K$ containing $K'$ and
\begin{equation}\label{eqn:goal2}
x'\in E'(K''L)\cap\alpha^{-1}(U(L))\;
\mbox{ with }\;(\pi_i)_{K''L}^{-1}(x')\mbox{ irreducible for every }i.
\end{equation}
The freedom to choose $K''$
allows us, using Lemma \ref{lem:fiber_products}(e), to freely replace
$K$ by a finite extension $K^*$ of $K$ inside $L$ (and accordingly $K'$ by $K^*K'$),
as well as to replace $K'$ by
a bigger finite Galois extension $K^\dagger$ of $K$
(and accordingly $K$ by $L\cap K^\dagger$).
Let $L'=K'L$ and $L'_p=K'L_p$.
Since ${\rm rk}(E(L))=\infty$,
by enlarging $K$ 
we can assume without loss of generality
that ${\rm rk}(E(K))>0$,
and then $E(K)$ is Zariski-dense in $E$.
By Serre's independence theorem (see \cite[Thm.~1]{Serreindependence} 
and \cite{serre-indep2})
we may
assume, after possibly enlarging $K'$
further,
that the family $(L'_p)_{p\in\mathbb{P}}$
is linearly disjoint over $K'$.
Enlarging $K'$ 
once again if necessary,
we may assume without loss of generality
that $\sqrt{-1}\in K'$
and that 
$P':=\{p\in\mathbb{P}:\sqrt{p}\in K'\}$
contains $P_f\cup\{2\}$,
where $P_f$ is the set of primes from Lemma~\ref{lem:values_of_pol}.

Let $m={\rm deg}(\alpha)$ and let $\alpha'\colon E_{K'}\rightarrow E'$ be the isogeny
with $\alpha\circ\alpha'=[m]_{E_{K'}}$ (Lemma \ref{lem:av_dual_isog}).
The set 
$$
 S:=\bigcup_{i=1}^n\left\{x'-x'':x',x''\in{\rm Branch}(\pi_i)(\bar{K})\right\}\subseteq E'(\bar{K})
$$ 
is finite
and invariant under ${\rm Gal}({K'})$.
Applying Lemma \ref{lem:values_of_pol}
with $P=P'$ gives infinitely many $k_1\in\mathbb{Z}$ such that $\sqrt{f(k_1)}\notin K'$.
Since $[2]^{-1}{\alpha}'^{-1}(S)$
is finite,
$x_{k_1}\notin[2]^{-1}{\alpha}'^{-1}(S)$ for cofinitely many of these $k_1$,
and we fix such $k_1$
and let 
$M_1:=K(\sqrt{f(k_1)})$
and $t_1:=x_{k_1}\in E(M_1)$.
Applying Lemma \ref{lem:values_of_pol} again, now with 
$P=P'\cup\mathbb{P}(k_1)$, where
for $k\in\mathbb{Z}$ we write
$$
 \mathbb{P}(k) := \{p\in\mathbb{P}:v_p(f(k))\equiv 1\mod 2\},
$$ 
gives infinitely many
$k_2\in\mathbb{Z}$
for which 
$\sqrt{f(k_2)}\notin K'$ and
$\mathbb{P}(k_1)\cap\mathbb{P}(k_2)\subseteq P_f\subseteq P'$.
We fix such $k_2$ for which
$$
 x_{k_2}\notin [2]^{-1}{\alpha}'^{-1}(S)\cup\tau_{-t_1}([2]^{-1}{\alpha}'^{-1}(S))
$$ 
and let $M_2:=K(\sqrt{f(k_2)})$ and $t_2:=x_{k_2}\in E(M_2)$.
Let $P_1=\mathbb{P}(k_1)$
and $P_2=\mathbb{P}\setminus P_1$.
Then,
since $\mathbb{P}(k_2)\subseteq P_2\cup P'$
and $\sqrt{-1},\sqrt{2}\in K'$,
we have for $i=1,2$ that
$$
 M_i':=K'M_i=K'(\sqrt{|f(k_i)|})
 \subseteq K'(\sqrt{ p}:p\in\mathbb{P}(k_i)\setminus P')
 \subseteq K'(\mu_{p}:p\in P_i)\subseteq N_i := \prod_{p\in P_i}L'_p.
$$ 
Note that $N_1$ and $N_2$ are linearly disjoint Galois extensions of $K'$ with $N_1N_2=L'$,
hence 
\begin{equation}\label{eqn:Gal_N}
 {\rm Gal}(L'/K')={\rm Gal}(L'/N_1)\times{\rm Gal}(L'/N_2).
\end{equation}
In particular, also $M_1'$ and $M_2'$ are linearly disjoint over $K'$.
Let $M:=M_1M_2$ and $M':=K'M=M_1'M_2'$,
observe that
\begin{equation}\label{eqn:M}
 G := {\rm Gal}(M'/K')={\rm Gal}(M'/M_1')\times{\rm Gal}(M'/M_2')\cong C_2\times C_2
\end{equation}
is a Klein four-group,
and let 
$1\neq \sigma_i\in{\rm Gal}(M'/M_i')$
for $i=1,2$.
$$
 \xymatrix{
 & L\ar@{-}[rrrr]\ar@{-}[d] &&&& L'\ar@{-}[d]\ar@{-}[dl]\ar@{-}[dr] && \\
 & M\ar@{-}[dl]\ar@{-}[dr]  &&&N_1\ar@{-}[d]& M'\ar@{-}[dl]^{\left<\sigma_1\right>}\ar@{-}[dr]_{\left<\sigma_2\right>}\ar@{-}[dd]^G & N_2\ar@{-}[d]& \\
 M_1\ar@{-}[dr]&   &M_2\ar@{-}[dl]&& M_1'\ar@{-}[dr]_{C_2} && M_2'\ar@{-}[dl]^{C_2} \\
 & K\ar@{-}[rrrr] &&&& K' \\
 }
$$
Note that $\sigma_i(t_j)=(-1)^{i-j}t_j$
for $i,j\in\{1,2\}$.
Let $t:=t_1+t_2\in E(M)$
and $t':=\alpha'(t)\in E'(M')$ 
and observe that
$$
 t'-\sigma_1(t')=\alpha'(t_1+t_2)-\sigma_1(\alpha'(t_1+t_2))=2\alpha'(t_2)\notin S,
$$ 
analogously $t'-\sigma_2(t')=2\alpha'(t_1)\notin S$.
Moreover,
$$
 t'-\sigma_1\sigma_2(t') = \alpha'(2t_1+2t_2)\notin S.
$$
This implies
that $g(t')-h(t')\notin S$ for every $g,h\in G=\{1,\sigma_1,\sigma_2,\sigma_1\sigma_2\}$ with $g\neq h$.

For $g\in G$,
$$
 \pi_{i,g}:=\tau_{g(t')}\circ(\pi_i)_{M'}\colon Y_{i,g}=(Y_i)_{M'}\rightarrow E'_{M'}
$$ 
is a geometrically integral fully ramified Galois cover of $M'$-varieties.
Note that $\pi_{i,g}=(\pi_{i,1})^g$,
which induces a natural isomorphism
$\Gamma_i:={\rm Gal}(Y_{i,1}/E'_{M'})\rightarrow {\rm Gal}(Y_{i,g}/E'_{M'})$
(Remark~\ref{rem:gX}).
Moreover,
${\rm Branch}(\pi_{i,g})\cap{\rm Branch}(\pi_{i,h})=\emptyset$
for every $g,h\in G$ with $g\neq h$,
since otherwise there exist $x',x''\in{\rm Branch}(\pi_i)(\bar{K})$
with $x'+g(t')=x''+h(t')$,
leading to the contradiction $g(t')-h(t')=x''-x'\in S$.
Thus by Lemma \ref{lem:lin_disj_fully_ram} (with $n=4$)
the normalization of $E'_{M'}$ in the compositum $\prod_{g\in G}M'(Y_{i,g})$
is a fully ramified geometrically integral Galois cover
$\rho_i\colon Z_i\rightarrow E'_{M'}$ with Galois group
$$
 {\rm Gal}(Z_i/E'_{M'})=\prod_{g\in G}{\rm Gal}(Y_{i,g}/E'_{M'})=\Gamma_i^G,
$$
so
by Lemma \ref{lem:fiber_product_galois}
the composition $\delta_i\colon Z_i\rightarrow E'_{M'}\rightarrow E'$
is a Galois cover whose Galois group
we can identify with $\Gamma_i\wr G$
such that
the maps $\mathrm{pr}$ and $e_g$, for $g\in G$, 
coincide with the restriction maps $\Gal(Z_i/E')\to G$ and $\Gal(Z_i/E'_{M'})\to \Gal(Y_{i,g}/E'_{M'})=\Gamma_i$. 
$$
 \xymatrix{
    &&& & Z_i\ar[ddll]^{\Gamma_i^G\rtimes G}_{\delta_i}\ar[dd]^{\Gamma_i^G}_{\rho_i}\ar[dr] \\
    E_{K'}\ar[d]_{[m]}\ar[drr]^{\alpha'} &&Y_i\ar[d]_{\pi_i}^{\Gamma_i}&  && Y_{i,1}\ar[dl]^{\pi_{i,1}} \\ 
   E_{K'}&&E'\ar[ll]^\alpha && E'_{M'}\ar[ll]_G
 }
$$
Let $\Omega:=E(K)$ and
$$
 \Omega':=\alpha'( \Omega )\subseteq E'(K')\subseteq E'({M'}).
$$ 
Since $\Omega$ is a Zariski-dense subgroup of $E$
and $\alpha'$ is an isogeny, $\Omega'$ is a Zariski-dense 
subgroup of $E'$.
Thus by 
\cite[Theorem 1.4]{CDJLZ} applied to $E'_{M'}$ and $\Omega'$,
there exists 
a finite index coset $C'\subseteq\Omega'$ such that
for every $c'\in C'$ each of the fibers $\rho_i^{-1}(c')$ is integral.
Then $C:=\Omega\cap\alpha'^{-1}(C')$ is a finite index coset of $\Omega$,
hence Zariski-dense in $E$,
and so we can pick
$c\in C\cap\tau_{t}([m]^{-1}(U))$
and let $c':=\alpha'(c)$.
Without loss of generality, each $\delta_i$ is unramified over $c'$.

We claim that 
$$
 x':=c'-t'\in E'(M')\subseteq E'(L')
$$ 
is as required by (\ref{eqn:goal2}).
First of all note that indeed
$$
 \alpha(x')=\alpha(\alpha'(c)-\alpha'(t))=m\cdot(c-t)\in U(M)\subseteq U(L).
$$
Then let $z_i\in Z_i(\bar{K})$ with $\rho_i(z_i)=c'$,
let $y_i$ be the image of $z_i$ in $Y_{i,1}$,
and let $F_i=M'(z_i)$
and $F_{i,1}=M'(y_i)$ be the residue fields.
Since $\rho_i^{-1}(c')$ is integral,
we have $\rho_i^{-1}(c')={\rm Spec}(F_i)$ and $\pi_{i,1}^{-1}(c')={\rm Spec}(F_{i,1})$,
and by Lemma \ref{lem:decomposition_group}
we can identify
${\rm Gal}(F_i/K')=D(z_i/c')=\Gamma_i\wr G$
so that
${\rm pr}$ and $e_1$ correspond to the restriction maps
${\rm Gal}(F_i/K')\rightarrow{\rm Gal}(M'/K')$ respectively
${\rm Gal}(F_i/M')\rightarrow{\rm Gal}(F_{i,1}/M')$.
For $j=1,2$ let $H_j:={\rm Gal}(F_i\cap L'/F_i\cap N_j)$
and let $r\colon{\rm Gal}(F_i\cap L'/K')\rightarrow{\rm Gal}(M'/K')$ denote
the restriction map.
Then (\ref{eqn:Gal_N}) implies that $H_1H_2={\rm Gal}(F_i\cap L'/K')$ and $[H_1,H_2]=1$.
Since $N_j\cap M'=M_j'$, we get from (\ref{eqn:M})
that $r(H_j)={\rm Gal}(M'/M_j')\cong C_2$ for each $j$ and 
$G=r(H_1)\times r(H_2)$.
Therefore, Lemma \ref{lem:gt_tor} implies that
${\rm Gal}(F_i/F_i\cap L')\leq\Gamma_i^G$
surjects onto $\Gamma_i^{\{1\}}={\rm Gal}(F_{i,1}/M')$,
hence
$F_{i,1}$ and $L'$ are linearly disjoint over $M'$.
Thus
$$
 (\pi_i)_{L'}^{-1}(x') = 
 (\pi_{i,1})_{L'}^{-1}(x'+t')=(\pi_{i,1})_{L'}^{-1}(c')=
 \pi_{i,1}^{-1}(c')\times_{{\rm Spec}(M')}{\rm Spec}(L')=
 {\rm Spec}(F_{i,1}\otimes_{M'}L')
$$ 
is irreducible, concluding the proof.
\end{proof}

\begin{remark}
While \cite{Jarden} proved that
$\mathbb{Q}(A_{\rm tor})$ is Hilbertian,
the series of works
\cite{FJP,FP_div,Thornhill,BFW}
established Jarden's conjecture \cite[Conjecture 1]{Jarden}
that 
for every Hilbertian field $K$
and every abelian variety $A/K$
(and then in fact every commutative algebraic group, with some exceptions in positive characteristic), 
every intermediate field
$K\subseteq L\subseteq K(A_{\rm tor})$ is Hilbertian.
We note that our proof 
that $E_{\mathbb{Q}(A_{\rm tor})}$ has WHP does not seem to generalize to $E_L$ for arbitrary intermediate fields
$\mathbb{Q}\subseteq L\subseteq \mathbb{Q}(A_{\rm tor})$,
which however would have been surprising in light of Remark \ref{rem:counterexample}.
\end{remark}

\begin{remark}
It is known that over a Hilbertian PAC field,
every geometrically integral variety has HP \cite[Prop.\ 27.3.4, Example 24.8.5(b)]{FJ}.
We note that although $\mathbb{Q}^{\rm ab}$ and $\mathbb{Q}(A_{\rm tor})$ are
Hilbertian, they are not PAC.
Indeed, for $\mathbb{Q}^{\rm ab}$ this is \cite[Cor.~11.5.7]{FJ},
and we sketch the proof for $\mathbb{Q}(A_{\rm tor})$:
If $p$ is a prime of good reduction of $A$,
then for every prime number $\ell\neq p$,
$\mathbb{Q}(A[\ell^\infty])$ is contained in 
the maximal unramified extension $L^{\rm ur}$ of the local field $L=\mathbb{Q}_p(A[p])$
by the N\'eron--Ogg--Shafarevich criterion \cite[Ch 7.4 Thm.~5]{BLR},
hence ${\rm Gal}(L^{\rm ur}(A_{\rm tor})/L^{\rm ur})$ 
is a subgroup of the pro-$p$ group
${\rm Ker}({\rm GL}_{A[p^\infty]}(\mathbb{Z}_p)\rightarrow{\rm GL}_{A[p]}(\mathbb{F}_p))$
coming from the action on the Tate module of $A$.
In particular, $\mathbb{Q}(A_{\rm tor})\mathbb{Q}_p=L(A_{\rm tor})$ is not algebraically closed,
which by a result of Frey--Prestel \cite[Cor.~11.5.5]{FJ}
implies that $\mathbb{Q}(A_{\rm tor})$ is not PAC.
\end{remark}

\section*{Acknowledgements}

\noindent
The authors would like to thank
Daniele Garzoni for helpful discussions around \cite{CDJLZ}, Cornelius Greither for interesting discussions around Lemma \ref{lem:fiber_products}, 
Remy van Dobben de Bruyn for the suggestion to use the Hilbert scheme in the proof of Lemma~\ref{lem_stab},
and the referee as well as Jakob Stix for helpful remarks on the submitted version.

Part of this work was done while A.F.\ was a guest of Tel Aviv University,
and he would like to thank the School of Mathematics for their hospitality.
L.B.-S. was supported by the Israel Science Foundation (grant no.~702/19).
S.P. was supported by a research grant UMO-2018/31/B/ST1/01474 of the National Centre of Sciences of Poland.

%

\end{document}